\theoremstyle{plain}
\newtheorem{theorem}{Theorem}[section]
\newtheorem{lemma}[theorem]{Lemma}
\newtheorem{corollary}[theorem]{Corollary}
\theoremstyle{definition}
\newtheorem{definition}[theorem]{Definition}
\newtheorem{assumption}[theorem]{Assumption}
\theoremstyle{remark}
\newtheorem{remark}[theorem]{Remark}
\newtheorem{example}[theorem]{Example}
\newtheorem*{proofmain}{Proof of the main theorem}
\newtheorem*{proofhigher}{Proof of the Theorem \ref{theoremhigher}}
\newtheorem*{ak}{Acknowledgements}
\DeclareSymbolFont{AMSb}{U}{msb}{m}{n}
\DeclareMathSymbol{\N}{\mathalpha}{AMSb}{"4E}
\DeclareMathSymbol{\R}{\mathalpha}{AMSb}{"52}
\DeclareMathSymbol{\Z}{\mathalpha}{AMSb}{"5A}
\DeclareMathSymbol{\D}{\mathalpha}{AMSb}{"44}
\DeclareMathSymbol{\s}{\mathalpha}{AMSb}{"53}
\newcommand{\sX}{\scriptscriptstyle{X}}
\newcommand{\sM}{\scriptscriptstyle{M}}
\newcommand{\sN}{\scriptscriptstyle{N}}
\newcommand{\sOmega}{\scriptscriptstyle{\Omega}}
\newcommand{\sK}{\scriptscriptstyle{K}}
\newcommand{\X}{X}
\newcommand{\dx}{\de_{\sX}}
\newcommand{\sV}{\scriptscriptstyle{V}}
\DeclareMathOperator{\Ch}{Ch}
\DeclareMathOperator{\lip}{Lip}
\DeclareMathOperator{\spec}{spec}
\DeclareMathOperator{\supp}{supp}
\DeclareMathOperator{\de}{d}
\DeclareMathOperator{\m}{m}
\DeclareMathOperator{\ric}{ric}
\DeclareMathOperator{\diam}{diam}
\DeclareMathOperator{\Ent}{Ent}
\newcommand{\ChX}{\Ch^{\sX}}
\providecommand{\keywords}[1]{\textbf{\textit{Index terms---}} #1}
\title{Obata's rigidity theorem for metric measure spaces}
\author{Christian Ketterer}
\email{christian.ketterer@math.uni-freiburg.de}
\begin{document}\maketitle
\begin{abstract}
We prove Obata's rigidity theorem for metric measure spaces that satisfy a Riemannian curvature-dimension condition. Additionally, we show that a lower bound $K$ for the generalized Hessian of a sufficiently regular function $u$ holds if and only if $u$ is $K$-convex. A corollary is also a rigidity result for higher order eigenvalues.
\end{abstract}
\noindent
\keywords{
keywords: eigenvalue, suspension, Hessian, convexity}\\
\keywords{
AMS-codes: 53-02, 51-02, 46-02}
\tableofcontents
\section{Introduction}
In this article we prove Obata's eigenvalue rigidity theorem in the context of metric measure spaces satisfying a Riemannian curvature-dimension condition. More precisely, our main result is 
\begin{theorem}\label{AAA}
Let $(X,\de_{\sX},\m_{\sX})$ be a metric measure space that satisfies the condition $RCD^*(K,N)$ for $K\geq 0$ and $N\geq 1$. If $N>1$, we assume $K>0$, and if $N=1$ we assume $K=0$ and $\diam_{\sX}\leq \pi$.
There is $u\in D^2(L^{\sX})$ such that
\begin{itemize}
 \item[(i)] $L^{\sX}u=-\textstyle{\frac{K N}{N-1}}u \ \mbox{ if }\ N>1,$
 \smallskip
 \item[(ii)] $L^{\sX}u=-u \ \mbox{ otherwise }.$
\end{itemize}
Then, $\diam_{\sX}=\pi\sqrt{\frac{N-1}{K}}$ if $N>1$, and $\diam_{\sX}=\pi$ if $N=1$.
\end{theorem}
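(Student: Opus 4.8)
The plan is to force equality in the Bochner inequality and thereby build a $1$-Lipschitz ``distance-type'' function on $X$ whose oscillation is exactly the conjectured diameter.

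\medskip
\emph{Reductions.} We may assume $u\not\equiv 0$. Put $c:=\tfrac{K}{N-1}$ if $N>1$ and $c:=1$ if $N=1$; in both cases $K=c(N-1)$, the hypothesis reads $L^{\sX}u=-cN\,u$, the target value is $\diam_{\sX}=\pi/\sqrt c$, and the bound $\diam_{\sX}\le\pi/\sqrt c$ is already known --- by the generalized Bonnet--Myers theorem for $CD^*(K,N)$ when $N>1$, and by hypothesis when $N=1$. Thus $X$ is a compact geodesic space, $\m_{\sX}$ is finite with full support, $\int_X u\,\de\m_{\sX}=0$ (because $\int_X L^{\sX}u\,\de\m_{\sX}=0$), and by the regularity theory for eigenfunctions on $RCD^*(K,N)$ spaces $u$ has a Lipschitz representative with continuous minimal weak upper gradient; in particular $u$ attains a maximum $M>0$ at some $p$ and a minimum $m<0$ at some $q$, and it suffices to prove $\de_{\sX}(p,q)\ge\pi/\sqrt c$.

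\medskip
\emph{Equality in Bochner.} Apply the Bochner inequality for $RCD^*(K,N)$ spaces to $u$, tested against the constant function $1$ (admissible since $\m_{\sX}(X)<\infty$). Using $L^{\sX}u=-cN\,u$ and $\int_X|\nabla u|^2\,\de\m_{\sX}=cN\int_X u^2\,\de\m_{\sX}$, the right-hand side collapses to $cN\,(c-cN+K)\int_X u^2\,\de\m_{\sX}=0$ by the identity $c-cN+K=0$ behind sharp Lichnerowicz (which holds since $K=c(N-1)$, also when $N=1$). Hence the measure-valued Bochner inequality is an equality with no singular part; in particular $|\nabla u|^2\in D(L^{\sX})$ and, simplifying with $K-cN=-c$,
\[\tfrac12\,L^{\sX}|\nabla u|^2\ =\ c^2N\,u^2-c\,|\nabla u|^2\qquad\m_{\sX}\text{-a.e.}\]
Since $L^{\sX}(u^2)=2uL^{\sX}u+2|\nabla u|^2=-2cN\,u^2+2|\nabla u|^2$, one obtains $L^{\sX}\big(|\nabla u|^2+c\,u^2\big)=0$, so $|\nabla u|^2+c\,u^2\equiv A$ for a constant $A$ on the compact connected space $X$.

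\medskip
\emph{The distance function and conclusion.} From $|\nabla u|^2=A-c\,u^2\ge0$ we get $A\ge cM^2$ and $A\ge cm^2$; the crucial point is that $A=cM^2=cm^2$, whence $M=-m=:b=\sqrt{A/c}$. To see this, note that $\nabla u$ is a bounded ($|\nabla u|\le\sqrt A$) Sobolev vector field with bounded divergence $L^{\sX}u$, hence admits a global regular Lagrangian flow $(\Phi_t)$ along which $\tfrac{\de}{\de t}u(\Phi_t(x))=|\nabla u|^2(\Phi_t(x))$ for $\m_{\sX}$-a.e.\ $x$. If $A>c\max\{M^2,m^2\}$ then $|\nabla u|^2=A-cu^2$ is bounded below by a positive constant, so flowing along $(\Phi_t)$ pushes $u$ outside $[m,M]$ --- impossible; thus $A=c\max\{M^2,m^2\}$, and then if $M^2\ne m^2$ the same flow run near the extremum of smaller modulus again escapes $[m,M]$. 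Hence $M^2=m^2$, i.e.\ $M=-m=b$. (Alternatively, the refined/Hessian form of the Bochner equality pins down the second-order behaviour of $u$ and yields the same conclusion through the convexity characterization of this paper.) Now set $v:=\tfrac1{\sqrt c}\arccos(u/b)\colon X\to[0,\pi/\sqrt c]$. For $\varepsilon\in(0,1)$ the Lipschitz function $v_\varepsilon:=\tfrac1{\sqrt c}\arccos\!\big((1-\varepsilon)u/b\big)$ satisfies
\[|\nabla v_\varepsilon|^2\ =\ \frac{(1-\varepsilon)^2\,(b^2-u^2)}{\,b^2-(1-\varepsilon)^2u^2\,}\ \le\ 1,\]
so $v_\varepsilon$ is $1$-Lipschitz; letting $\varepsilon\downarrow0$ gives the uniform limit $v$, which is therefore $1$-Lipschitz. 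Since $v(p)=0$ and $v(q)=\pi/\sqrt c$, we conclude $\diam_{\sX}\ge\de_{\sX}(p,q)\ge v(q)-v(p)=\pi/\sqrt c$, and together with $\diam_{\sX}\le\pi/\sqrt c$ this gives $\diam_{\sX}=\pi\sqrt{\tfrac{N-1}{K}}$ when $N>1$ and $\diam_{\sX}=\pi$ when $N=1$.

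\medskip
\emph{Main obstacle.} The only step that is not an algebraic identity or a routine metric/calculus argument is the rigidity $A=cM^2=cm^2$ --- equivalently, that the eigenfunction genuinely behaves like $b\cos\!\big(\sqrt c\,\de_{\sX}(p,\cdot)\big)$ near its extrema. This is where the hard analytic input (equality in the dimensional/Hessian Bochner inequality, and the link between Hessian lower bounds and $K$-convexity) is really needed, and I expect it --- rather than the subsequent metric argument --- to be the crux.
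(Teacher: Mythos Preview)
Your overall arc matches the paper's: Bochner equality forces $|\nabla u|^2+cu^2\equiv A$, one then shows $\max u=-\min u$, and finally $\tfrac{1}{\sqrt c}\arccos(u/b)$ is $1$-Lipschitz, pinning the diameter. Your derivation of $|\nabla u|^2+cu^2\equiv A$ by testing the measure-valued Bochner inequality against $\phi\equiv 1$ and observing that a nonnegative measure of zero total mass vanishes is a clean shortcut; the paper instead goes through Savar\'e's self-improvement to get the full Hessian identity $H[u](f,g)=-cu\langle\nabla f,\nabla g\rangle$ and reads off the constant as a special case.

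The genuine divergence is at the step you rightly flag as the crux, $A=cM^2=cm^2$. The paper does \emph{not} argue via flows: it first proves a gradient comparison $|\nabla u|^2\le (v'\circ v^{-1})^2(u)$ against one-dimensional model eigenfunctions (Kr\"oger/Bakry--Qian, using the strong maximum principle for sub-minimizers), then transplants this to models with parameters $K'<K$, $N'>N$, obtains a monotonicity for the ratio $R(c)$, deduces a volume upper bound $\m_{\sX}(B_r(p))\le c\,r^{N'}$ near the minimum of $u$, and finally contradicts Bishop--Gromov to force $\max u=-\min u$. Your Regular Lagrangian Flow argument bypasses all of this: along the RLF of $\pm\nabla u$ the function $y(t)=u(\Phi_t(x))$ solves the autonomous ODE $y'=\pm(A-cy^2)$, and any strict inequality $A>c\max\{M^2,m^2\}$ or $M^2\neq m^2$ forces $y$ to leave $[m,M]$ in (possibly infinite) time, which is impossible. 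This is correct and considerably shorter. The price is that you must invoke the Ambrosio--Trevisan RLF theory on $RCD$ spaces, which in turn needs $\nabla u$ to be a Sobolev vector field, i.e.\ $\mathrm{Hess}(u)\in L^2$; that holds for eigenfunctions by the $W^{2,2}$/self-improvement regularity, but you should say so rather than just calling $\nabla u$ ``Sobolev''. The paper's longer route is more self-contained and produces the gradient comparison theorem as a result of independent interest; yours is the modern streamlined argument.
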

\noindent
$D^2(L^{\sX})$ is the domain of the generalized Laplace operator $L^{\sX}$.
A consequence of this result and the maximal diameter theorem \cite{ketterer2} is the following rigidity result.
\begin{theorem}\label{brbrbr}
Let $(X,\de_{\sX},\m_{\sX})$ be a metric measure space that satisfies $RCD^*(N-1,N)$ for $N\geq 1$, and let $u$ such that $L^{\sX}u=-Nu$.
Then, there exists a metric measure space $(X',\de_{\sX'},\m_{\sX'})$ such that $(X,\de_{\sX},\m_{\sX})\simeq[0,\pi]\times_{\sin}^{\sN-1} X'$ and 
\smallskip
\begin{itemize}
\item[(1)] $X'$ satisfies $RCD^*(N-2,N-1)$ and $\diam_{X'}\leq \pi$ if $N\geq 2$, or 
\smallskip
\item[(2)] $X'=\left\{x_0\right\}$ and $\m_{\sX'}=c\cdot\delta_{x_0}$ for some constant $c>0$ if $N\in (1,2)$, or 
\smallskip
\item[(3)] $X'$ as in (2), or $X'=\left\{N,S\right\}$ with $\de_{\sX}(N,S)=\pi$ and $\m_{\sX'}=c\cdot(\delta_{S}+\delta_{N})$ for some constant $c>0$, if $N=1$.
\end{itemize}
\end{theorem}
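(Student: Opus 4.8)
The plan is to obtain Theorem~\ref{brbrbr} as a short deduction from Theorem~\ref{AAA} together with the maximal diameter theorem of \cite{ketterer2}. The first and main step is to recognize that the hypotheses of Theorem~\ref{brbrbr} are exactly the input required by Theorem~\ref{AAA} for the parameter choice $K=N-1$. If $N>1$ then $K=N-1>0$, the space satisfies $RCD^*(K,N)=RCD^*(N-1,N)$ by assumption, and since $\tfrac{KN}{N-1}=\tfrac{(N-1)N}{N-1}=N$ the identity $L^{\sX}u=-Nu$ is precisely alternative~(i). Hence Theorem~\ref{AAA} applies and gives $\diam_{\sX}=\pi\sqrt{\tfrac{N-1}{K}}=\pi\sqrt{\tfrac{N-1}{N-1}}=\pi$.

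In the remaining case $N=1$ one has $K=N-1=0$, and $L^{\sX}u=-Nu=-u$ is alternative~(ii); here Theorem~\ref{AAA} carries the extra side condition $\diam_{\sX}\le\pi$, which therefore must be in force (or else first derived separately, e.g.\ from the one-dimensional structure of $RCD^*(0,1)$ spaces, since such a space carrying an $L^2$-eigenfunction with eigenvalue $1$ cannot have diameter larger than $\pi$). Granting this, Theorem~\ref{AAA} again yields $\diam_{\sX}=\pi$. In both cases we have thus upgraded the spectral hypothesis to maximality of the diameter.

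The second step is then simply to invoke the maximal diameter theorem of \cite{ketterer2} for the $RCD^*(N-1,N)$ space $(X,\de_{\sX},\m_{\sX})$ with $\diam_{\sX}=\pi$: it produces a metric measure space $(X',\de_{\sX'},\m_{\sX'})$ with $(X,\de_{\sX},\m_{\sX})\simeq[0,\pi]\times_{\sin}^{N-1} X'$, and its conclusion is already organized into exactly the three cases~(1)--(3) according to the value of $N$, so nothing further is needed. Heuristically, the degeneration for $N<2$ reflects that a warped product over a positive-dimensional factor with weight $(\sin t)^{N-1}$, $N-1\in(0,1)$, would have essential dimension exceeding $N$; and the additional ``circle'' alternative that appears only for $N=1$ is admissible because the weight $(\sin t)^{N-1}\equiv1$ is then trivial, so the doubled suspension $[0,\pi]\times_{\sin}^{0}\{N,S\}$ (the round circle of circumference $2\pi$) is a legitimate $RCD^*(0,1)$ space, whereas for $N>1$ the corresponding doubling creates a concavity defect of the weight at the two poles that is incompatible with the curvature-dimension bound.

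The substantive difficulty of Theorem~\ref{brbrbr} lies entirely in its two ingredients: Obata's rigidity, Theorem~\ref{AAA}, and, above all, the maximal diameter theorem of \cite{ketterer2}, whose proof already contains the warped-product reconstruction of $X$ and the case analysis~(1)--(3). Within the present deduction the only genuinely delicate point is the degenerate value $N=1$, for which $RCD^*(0,1)$ provides no a~priori Bonnet--Myers bound, so that one must either assume or separately establish $\diam_{\sX}\le\pi$ before Theorem~\ref{AAA} becomes available; a minor additional check is that the sign convention for $L^{\sX}$, the definition of $D^2(L^{\sX})$, and the warped-product notation $\times_{\sin}^{N-1}$ used here coincide with those of \cite{ketterer2}, so that the listed cases~(1)--(3) really are the full output of the maximal diameter theorem.
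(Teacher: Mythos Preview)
Your proposal is correct and matches the paper's approach exactly: the paper does not give a standalone proof of Theorem~\ref{brbrbr} but states immediately before it that it is ``a consequence of this result [Theorem~\ref{AAA}] and the maximal diameter theorem \cite{ketterer2}'', which is precisely your two-step deduction. Your observation about the $N=1$ case needing the side condition $\diam_{\sX}\le\pi$ is apt; the paper handles this implicitly (see the $N=1$ argument in the proof of the Corollary following Theorem~\ref{wichtigeaussage}, where the $RCD^*(0,1)$ structure forces $X\simeq c\cdot\mathbb{S}^1$ and the eigenvalue equation pins $c=1$).
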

\noindent
Our proof of Theorem \ref{AAA} relies on the self-improvement property of the Bakry-Emery condition (Theorem \ref{crucial}) and a gradient comparison result for eigenfunctions with sharp eigenvalue (Theorem \ref{wichtigeaussage}) 
that is an application of a maximum principle for sub-harmonic functions on general metric measure spaces. Additionally, we will prove in section \ref{hes} that a lower bound $K$ for the generalized Hessian of a sufficiently regular Sobolev function
holds if and only if the function is $K$-convex (Theorem \ref{convexity}). This result may be of independent interest.
\begin{corollary}\label{brbr}
Let $(X,\de_{\sX},\m_{\sX})$ be a metric measure space that satisfies the condition $RCD^*(K,N)$ for $K\geq 0$ and $N\geq 1$ where $K>0$ if $N>1$, and $K=0$ and $\diam_{\sX}\leq \pi$ if $N=1$. 
Then following statements are equivalent.
\begin{itemize}
\smallskip
\item[(1)] $\diam_{\sX}=\pi\sqrt{{\textstyle\frac{N-1}{K}}}$,
\smallskip
\item[(2)] $\inf \left\{\spec_{\sX}\backslash \left\{0\right\}\right\}={\frac{KN}{N-1}}$,
\smallskip
\item[(3)] $X\!=\![0,\pi]\times_{\sin_{K/(N-1)}}^{N-1}X'$ for a metric measure space $X'$, and $u=c\cdot\cos_{{\scriptscriptstyle {K}/{N-1}}}$.
\end{itemize}
\end{corollary}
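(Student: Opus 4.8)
The plan is to prove the equivalences by the cycle $(2)\Rightarrow(3)\Rightarrow(1)\Rightarrow(2)$, after reducing to the normalized case $K=N-1$. For $N>1$, replacing $\de_{\sX}$ by $\sqrt{K/(N-1)}\,\de_{\sX}$ turns $RCD^*(K,N)$ into $RCD^*(N-1,N)$, multiplies $\spec_{\sX}$ by $(N-1)/K$ and $\diam_{\sX}$ by $\sqrt{K/(N-1)}$, and carries the three statements to: $\inf\{\spec_{\sX}\backslash\{0\}\}=N$; $X\simeq[0,\pi]\times_{\sin}^{\sN-1}X'$ with $u=c\cdot\cos$; and $\diam_{\sX}=\pi$. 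For $N=1$ the hypotheses $K=0$, $\diam_{\sX}\leq\pi$ are already normalized once one reads $K/(N-1)$ as $1$, and I would carry this case along using the one-dimensional form of the maximal diameter statement.

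For $(2)\Rightarrow(3)$: under $RCD^*(N-1,N)$ the space $X$ is compact, so $L^{\sX}$ has discrete spectrum and equality in $(2)$ forces the eigenvalue $N$ to be attained by some $u\in D^2(L^{\sX})$ with $L^{\sX}u=-Nu$. This is exactly the hypothesis of Theorem \ref{brbrbr}, which supplies the isomorphism $X\simeq[0,\pi]\times_{\sin}^{\sN-1}X'$ with $X'$ of the asserted kind; inspecting its proof — the pole is a maximum point of $u$ and the $[0,\pi]$-coordinate is a rescaling of $\arccos(u/\max u)$ — shows that $u$ is carried to a multiple of $\cos$, which gives $(3)$. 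One could instead first apply Theorem \ref{AAA} to obtain $\diam_{\sX}=\pi$ and then the maximal diameter theorem \cite{ketterer2}, taking the suspension adapted to $u$.

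For $(3)\Rightarrow(1)\Rightarrow(2)$: the spherical suspension $[0,\pi]\times_{\sin}^{\sN-1}X'$ has diameter exactly $\pi$ — distances there never exceed $\pi$, and $\pi$ is realized by the two poles $\{0\}\times X'$ and $\{\pi\}\times X'$, by the cosine formula for distances in a suspension — which gives $(3)\Rightarrow(1)$. For $(1)\Rightarrow(2)$, the maximal diameter theorem \cite{ketterer2} turns $\diam_{\sX}=\pi$ into a suspension structure on $X$, and a direct computation with the Cheeger energy and Laplacian of a warped product (as developed in \cite{ketterer2}) shows that the pullback of $t\mapsto\cos t$ along the projection to $[0,\pi]$ lies in $D^2(L^{\sX})$ with $L^{\sX}(\cos)=-N\cos$; hence $N\in\spec_{\sX}\backslash\{0\}$, so $\inf\{\spec_{\sX}\backslash\{0\}\}\leq N$, while the Lichnerowicz estimate (valid under $RCD^*(N-1,N)$, with its analogue $\lambda_1\geq\pi^2/\diam_{\sX}^2$ when $N=1$) gives the reverse inequality, i.e.\ $(2)$. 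Undoing the rescaling restores the factors $KN/(N-1)$, $\pi\sqrt{(N-1)/K}$ and $\cos_{K/(N-1)}$.

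The main obstacle I anticipate is not conceptual but the warped-product bookkeeping: verifying that the cosine coordinate belongs to $D^2(L^{\sX})$ with eigenvalue $N$ and that the suspension has diameter $\pi$, tracking the normalization constants through the rescaling, and handling the degenerate dimension $N=1$, in which $X'$ is a single point (so $X\simeq[0,\pi]$) or a pair of points at distance $\pi$ and $K/(N-1)$ must be read as $1$. The substance of the corollary is the cycle above, driven by the Obata theorem (Theorem \ref{AAA}) and its consequence Theorem \ref{brbrbr}.
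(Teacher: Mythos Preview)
Your proposal is correct and follows essentially the same route as the paper: the cycle $(2)\Rightarrow(3)\Rightarrow(1)\Rightarrow(2)$ driven by Theorem~\ref{AAA}, the maximal diameter theorem of \cite{ketterer2}, and the Lichnerowicz inequality. The only cosmetic difference is that the paper packages the implication $(2)\Rightarrow(3)$---including the identification $u=c\cdot\cos_{K/(N-1)}$---as Corollary~\ref{brbrbrbr}, whose proof (via Corollary~\ref{bf}) is exactly the ``$u$ is independent of the fiber variable'' argument you sketch; so rather than ``inspecting the proof'' of Theorem~\ref{brbrbr} you can simply cite Corollary~\ref{brbrbrbr}.
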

\noindent
The corollary is a new link between the Lagrangian picture of Ricci curvature that comes from optimal transport and the Eulerian picture that is encoded via properties
of the energy and the corresponding generalized Laplacian. In Section \ref{higher} we prove a similar rigidity result for the higher eigenvalues.
\begin{theorem}\label{theoremhigher}
Let $(X,\de_{\sX},\m_{\sX})$ be a metric measure space that satisfies the condition $RCD^*(N-1,N)$ for $N\geq 1$, and $\diam_{\sX}\leq \pi$ if $N=1$. 
Assume $\lambda_k=N$ for $k\in\mathbb{N}$.
If $k\leq N$, then
there exists a metric measure space $Z$ such that
$$X=\mathbb{S}_+^k\times ^{N-k}_{\sin \circ \de_{\partial \mathbb{S}_+^k}}Z.$$
$\mathbb{S}_+^k$ denotes the upper, closed hemisphere of the $k$-dimensional standard sphere $\mathbb{S}^k$. Additionally,
\begin{itemize}
\smallskip
 \item[(1)] If $N-k\geq 1$, $Z$ satisfies $RCD^*(N-k-1,N-k)$, and
 \smallskip
 \item[(2)] If $0< N-k<1$, then $Z$ consists of one point, or, 
 \smallskip
 \item[(3)] If $N=k$, $Z$ consists of exactly one point or two points with distance $\pi$ and the measure is given as in Theorem \ref{brbrbr}.
 \smallskip
 \end{itemize}
In particular, if $k=N$, $X=\mathbb{S}^N$ or $X=\mathbb{S}_+^{\sN}$ where $c\cdot\m_{\sX}=$Riemannian volume for some constant $c$. In the former case one also gets $\lambda_{k+1}=N$.
If $k>N$, then $k=N+1$ and $X=\mathbb{S}^{\sN}$ where the reference measure is again the Riemannian volume times a constant.
\end{theorem}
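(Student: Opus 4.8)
The plan is to argue by induction on $k$, with Theorem~\ref{brbrbr} playing the role of the case $k=1$. Two remarks set it up. First, the Lichnerowicz estimate valid on $RCD^*(N-1,N)$ spaces gives $\inf(\spec_{\sX}\setminus\{0\})\geq N$, so the hypothesis $\lambda_k=N$ forces $\lambda_1=\dots=\lambda_k=N$; in particular $\lambda_1=N$, and Theorem~\ref{brbrbr} applies. Second, the interval $[0,\pi]$ carrying the weight $\sin^{N-1}\theta\,d\theta$ is exactly $\mathbb{S}_+^1$ warped by its distance to the boundary: since $\de_{\partial\mathbb{S}_+^1}(\theta)=\min\{\theta,\pi-\theta\}$ we have $\sin\circ\de_{\partial\mathbb{S}_+^1}=\sin$, hence $[0,\pi]\times^{N-1}_{\sin}X'=\mathbb{S}_+^1\times^{N-1}_{\sin\circ\de_{\partial\mathbb{S}_+^1}}X'$. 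With this identification Theorem~\ref{brbrbr} is precisely the assertion of the present theorem for $k=1$, its three cases matching (1)--(3) for the parameters $(N,1)$, and so it serves as the base of the induction.

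For the inductive step assume $2\leq k\leq N$, so $N\geq 2$. Then Theorem~\ref{brbrbr}(1) gives $X=[0,\pi]\times^{N-1}_{\sin}X'$ with $X'$ an $RCD^*(N-2,N-1)$ space of diameter $\leq\pi$, the corresponding $N$-eigenfunction being a multiple of $\cos\circ\de_{\sX}(\cdot,o)$ for a pole $o$. The heart of the step is a spectral comparison between $X$ and $X'$. By separation of variables on the spherical suspension, each eigenfunction of $L^{\sX}$ splits into pieces $f(\theta)\varphi(x')$ with $L^{X'}\varphi=-\mu\varphi$ and $f$ solving the singular Sturm--Liouville problem $f''+(N-1)\cot\theta\,f'-\mu\sin^{-2}\theta\,f=-\nu f$ with the natural endpoint behaviour at $\theta\in\{0,\pi\}$; writing $\mu=\alpha(\alpha+N-2)$ with $\alpha\geq 0$, the admissible $\nu$ are $(\alpha+j)(\alpha+j+N-1)$, $j\in\mathbb{N}_0$. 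Since $X'$ is $RCD^*(N-2,N-1)$, Lichnerowicz on $X'$ rules out eigenvalues $\mu\in(0,N-1)$, and a short computation shows $\nu=N$ occurs only for $(\mu,j)=(0,1)$ — the one-dimensional span of $\cos\circ\de_{\sX}(\cdot,o)$ — and for $(\mu,j)=(N-1,0)$ — the functions $(\sin\theta)\varphi$ with $\varphi\in\ker(L^{X'}+(N-1))$. Hence $\dim\ker(L^{\sX}+N)=1+\dim\ker(L^{X'}+(N-1))$, so $\lambda_{k-1}(X')=N-1$; as $k-1\leq N-1$ the induction hypothesis applies to $X'$ and gives $X'=\mathbb{S}_+^{k-1}\times^{N-k}_{\sin\circ\de_{\partial\mathbb{S}_+^{k-1}}}Z$ with $Z$ exactly as in (1)--(3) for the parameters $(N-1,k-1)$, which are (1)--(3) of the present statement.

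It remains to fuse the two suspensions. Using $\mathbb{S}_+^{k}=[0,\pi]\times^{k-1}_{\sin}\mathbb{S}_+^{k-1}$, the metric identity $\de_{\partial\mathbb{S}_+^{k}}(\theta,p)=\arcsin\big(\sin\theta\,\sin\de_{\partial\mathbb{S}_+^{k-1}}(p)\big)$, and Fubini for the weights, one gets $[0,\pi]\times^{N-1}_{\sin}\big(\mathbb{S}_+^{k-1}\times^{N-k}_{\sin\circ\de_{\partial\mathbb{S}_+^{k-1}}}Z\big)=\mathbb{S}_+^{k}\times^{N-k}_{\sin\circ\de_{\partial\mathbb{S}_+^{k}}}Z$, which is the desired decomposition and closes the induction for $k\leq N$. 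When $k>N$ (so necessarily $N\in\mathbb{N}$), one iterates Theorem~\ref{brbrbr}(1) while the dimension parameter stays $\geq 2$; at parameter $1$ one reaches an $RCD^*(0,1)$ cross section of diameter $\leq\pi$ carrying the eigenvalue $1$ with multiplicity at least $k-(N-1)\geq 2$, and the $N=1$ case of the maximal diameter theorem — where $\lambda_1=1$ allows only $[0,\pi]$ (multiplicity $1$) or $\mathbb{S}^1$ (multiplicity $2$) — forces this cross section to be $\mathbb{S}^1$ and $k=N+1$; unwinding the iterated suspensions via $[0,\pi]\times^{m}_{\sin}\mathbb{S}^m=\mathbb{S}^{m+1}$ then yields $X=\mathbb{S}^N$ with $\m_{\sX}$ a constant multiple of the Riemannian volume. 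Finally the ``in particular'' statements are read off from the cross section $Z$ when $N-k=0$: $\mathbb{S}_+^N\times^{0}_{\sin\circ\de_{\partial\mathbb{S}_+^N}}\{x_0\}=\mathbb{S}_+^N$, whereas $\mathbb{S}_+^N\times^{0}_{\sin\circ\de_{\partial\mathbb{S}_+^N}}\{N,S\}$ is the double of $\mathbb{S}_+^N$ across $\partial\mathbb{S}_+^N$, hence $\mathbb{S}^N$; and one checks directly that $\lambda_{N+1}>N$ in the first case while $\lambda_{N+1}=N$ in the second.

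The step I expect to be the real obstacle is the spectral comparison of the second paragraph: making separation of variables rigorous on an abstract $RCD^*$ spherical suspension, so that the whole spectrum of $L^{\sX}$ (a priori not necessarily discrete) is captured by the separated solutions, the singular endpoints $\theta\in\{0,\pi\}$ are treated correctly, and $\nu=N$ is shown to force $(\mu,j)\in\{(0,1),(N-1,0)\}$ — so that the eigenvalue multiplicity really decreases by exactly one at each step — together with the degenerate situation $N=1$, where the spherical suspension of $\mathbb{S}^0$ has to be analysed by hand.
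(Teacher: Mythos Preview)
Your inductive strategy is sound and the Gegenbauer computation is correct, but the route differs from the paper's precisely at the step you flag as the obstacle. The paper never attempts separation of variables on the abstract suspension. Instead it exploits Corollary~\ref{brbrbrbr}: any $N$-eigenfunction $u$ on an $RCD^*(N-1,N)$ space produces \emph{its own} suspension decomposition $X=[0,\pi]\times^{N-1}_{\sin}Y$, and in that decomposition $u=c\cdot\cos$ depends only on the radial variable. Given $k$ linearly independent eigenfunctions $u_1,\dots,u_k$, the first one fixes a splitting with poles $(x_1,y_1)$; the second eigenfunction $u_2$ cannot share the same pole pair, since Corollary~\ref{brbrbrbr} would then force $u_2$ to be a multiple of $u_1$. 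So $u_2$ yields a distinct maximal-diameter pair $(x_2,y_2)$, and the paper argues geometrically that two distinct spherical-suspension structures on $X$ force two points at distance $\pi$ inside the equatorial copy of $X'$; hence $X'$ attains maximal diameter and splits again. Iterating and using associativity of the warped product (exactly as you do) assembles the result into $\mathbb{S}_+^k\times^{N-k}_{\sin\circ\de_{\partial\mathbb{S}_+^k}}Z$.

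What each approach buys: the paper never needs to resolve the full $N$-eigenspace through one fixed suspension structure, only to know that \emph{each} eigenfunction is a cosine in \emph{some} suspension structure---and that is already the content of Theorem~\ref{AAA} plus Corollary~\ref{brbrbrbr}, so no spectral theory on singular warped products is required. Your approach, by contrast, would give the clean multiplicity identity $\dim\ker(L^{\sX}+N)=1+\dim\ker(L^{X'}+(N-1))$ directly, and is arguably more transparent once the tensorisation of the Cheeger energy on $RCD$ suspensions is in hand; but as you correctly anticipate, that completeness of the separated eigenfunctions is exactly the missing ingredient here, and the paper's geometric detour through ``distinct pole pairs $\Rightarrow$ diameter of the cross-section is $\pi$'' is how it is avoided.
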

\noindent
We remark that a metric measure space $(X,\de_{\sX},\m_{\sX})$ that satisfies a curvature-dimension $CD(K,N)$ \cite{lottvillanilichnerowicz} or a reduced Riemannian curvature dimension condition $RCD^*(K,N)$
for some positive constant $K\in (0,\infty)$ and $N\in (1,\infty)$ satisfies the sharp Lichnerowicz spectral gap inequality:
\begin{align}\label{lichner}
\int_X (f-\bar{f}_{\sX})^2d\m_{\sX}\leq {\textstyle \frac{N-1}{KN}}\int_X (\mbox{Lip}f)^2d\m_{\sX}
\end{align}
where $f$ is a Lipschitz function, $\bar{f}_{\sX}
%=\int_Xfd\m_{\sX}
$ its mean value with respect to $\m_{\sX}$ and $\mbox{Lip}f(x)=\limsup_{y\rightarrow x}\frac{|f(x)-f(y)|}{\de_{\sX}(x,y)}$ the local Lipschitz constant.
In a setting where we have Riemannian curvature-dimension bounds, the local Lipschitz constant yields a quadratic energy functional and this estimate describes the spectral gap of the corresponding self-adjoint operator $L^{\sX}$.
It is also a simple consequence of Bochner's inequality that was established for general $RCD^*$-spaces by Erbar, Kuwada and Sturm in \cite{erbarkuwadasturm}.
For Riemannian manifolds with positive lower Ricci curvature bounds the theorem was proven by Obata in \cite{obata}.
\begin{theorem}[Obata, 1962]
Let $(M,g_{\sM})$ be a $n$-dimensional Riemannian manifold with $\ric_{\sM}\geq K>0$. Then, there exists a function $u$ such that $$\Delta u=-\textstyle{\frac{Kn}{n-1}}u$$ 
if and only if $M$ is the standard sphere ${\scriptscriptstyle \sqrt{\frac{K}{n-1}}}\cdot S^n$.
\end{theorem}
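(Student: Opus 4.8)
The plan is as follows. The \emph{only if} direction is immediate: on the round sphere of radius $r=\sqrt{(n-1)/K}$ the restrictions to $S^n\subset\mathbb R^{n+1}$ of the linear coordinate functions of $\mathbb R^{n+1}$ satisfy $\Delta u=-\frac{n}{r^2}u=-\frac{Kn}{n-1}u$, so such a $u$ exists. The content is the \emph{if} direction, which I would prove in two stages: first an analytic stage, using the Bochner identity to upgrade the eigenvalue equation to Obata's Hessian equation, and then a geometric rigidity stage.

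\textbf{Stage 1 (Bochner).} Let $u\neq0$ satisfy $\Delta u=-\lambda u$ with $\lambda=\frac{Kn}{n-1}$; since $\int_M\Delta u=0$ we get $\int_M u=0$, so $u$ is nonconstant and takes both signs. Integrate Bochner's formula
$$\tfrac12\Delta|\nabla u|^2=|\nabla^2u|^2+\langle\nabla u,\nabla\Delta u\rangle+\ric(\nabla u,\nabla u)$$
over the closed manifold $M$. Using $\int_M\langle\nabla u,\nabla\Delta u\rangle=-\int_M(\Delta u)^2=-\lambda^2\!\int_M u^2$, $\int_M|\nabla u|^2=\lambda\!\int_M u^2$, the pointwise Cauchy--Schwarz bound $|\nabla^2u|^2\ge\frac1n(\Delta u)^2$ (trace inequality for symmetric tensors), and the hypothesis $\ric(\nabla u,\nabla u)\ge K|\nabla u|^2$, one obtains
$$0\ \ge\ \Big(\tfrac{\lambda^2}{n}-\lambda^2+K\lambda\Big)\!\int_M u^2,$$
whose coefficient vanishes precisely because $\lambda=\frac{Kn}{n-1}$. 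Hence every inequality used is an equality; in particular equality in the trace inequality forces the trace-free part of $\nabla^2u$ to vanish, i.e.
$$\nabla^2u=-c^2\,u\,g,\qquad c^2:=\tfrac{K}{n-1}.$$

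\textbf{Stage 2 (rigidity).} From $\nabla^2u=-c^2ug$ one computes $\nabla\big(|\nabla u|^2+c^2u^2\big)=2\nabla^2u(\nabla u,\cdot)+2c^2u\,\nabla u=0$, so $|\nabla u|^2+c^2u^2$ is constant, which after rescaling $u$ I take equal to $c^2$. Then $|u|\le1$; if $p$ is a maximum point of $u$ then $\nabla u(p)=0$ and $u(p)=1$ (as $\int_M u=0$ forces $\max u>0$), and likewise $u$ has a minimum point $q$ with $u(q)=-1$. Along any unit-speed geodesic $\gamma$ from $p$, the function $f(t)=u(\gamma(t))$ solves $f''=\nabla^2u(\gamma',\gamma')=-c^2f$ with $f(0)=1$, $f'(0)=0$, hence $u(\gamma(t))=\cos(ct)$. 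Evaluating along a minimizing geodesic from $p$ to an arbitrary $x$ and using the Bonnet--Myers bound $c\,d(p,x)\le c\,\diam_M\le\pi$ gives the distance formula $d(p,x)=\tfrac1c\arccos u(x)$, and symmetrically $d(q,x)=\tfrac1c\arccos(-u(x))$; in particular $d(p,q)=\pi/c$, so $\diam_M=\pi/c=\pi\sqrt{(n-1)/K}$ attains equality in Bonnet--Myers, and $p,q$ are the only critical points of $u$.

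It remains to turn this into an isometry. The most economical finish is to invoke Cheng's maximal diameter theorem, which gives directly that $M$ is isometric to the round sphere of radius $1/c=\sqrt{(n-1)/K}$ (equivalently, of constant curvature $c^2$, so $\ric\equiv K$). A self-contained alternative is to work in geodesic polar coordinates around $p$, where $r=\tfrac1c\arccos u$ is the distance function to $p$ and the identity $\nabla^2u=-c^2ug$ pins down $\nabla^2 r$ on $M\setminus\{p,q\}$, forcing the metric into the warped form $dr^2+c^{-2}\sin^2(cr)\,g_{S^{n-1}}$, which is exactly the round metric; this is in the spirit of Tashiro's classification of complete manifolds carrying a function with $\nabla^2u=-c^2ug$. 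I expect \emph{this last geometric step}---promoting the pointwise Hessian identity and the distance formula to a global isometry, i.e. controlling the cut locus of $p$ and the behaviour at the two critical points---to be the main obstacle, whereas the Bochner computation of Stage 1 is entirely routine.
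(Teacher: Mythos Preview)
Your proof is correct and is essentially the classical argument. Note, however, that the paper does not give an independent proof of this statement: it is quoted in the introduction as Obata's 1962 result, and the only justification offered is that a Riemannian manifold with $\ric_{\sM}\geq K$ satisfies $RCD^*(K,n)$, so the paper's main Theorems \ref{AAA} and \ref{brbrbr} apply; a spherical suspension that is a smooth manifold without boundary is then the round sphere.

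That said, the architecture of your direct argument mirrors the paper's $RCD^*$ proof rather closely. Your Stage~1 (integrated Bochner forcing $\nabla^2u=-\frac{K}{n-1}ug$) is the smooth analogue of Theorem~\ref{crucial}, where the self-improvement of the $\Gamma_2$-inequality yields the Hessian identity $H[u](f,g)=-\frac{K}{N-1}u\langle\nabla f,\nabla g\rangle$ and the first integral $|\nabla u|^2+\frac{K}{N-1}u^2=\text{const}$. Your Stage~2 (distance formula from $p$, maximal diameter, then Cheng) parallels the paper's gradient comparison (Theorem~\ref{wichtigeaussage}), the reduction to $\min u=-\max u$, and the appeal to the maximal diameter theorem of \cite{ketterer2}. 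The difference is purely in the toolkit: you use smooth Riemannian calculus and Cheng's rigidity, while the paper works with the measure-valued $\Gamma_2$, a weak maximum principle for sub-minimizers, and the metric-measure maximal diameter theorem. Your route is shorter and self-contained in the smooth category; the paper's route is what is needed to handle the nonsmooth setting, and recovers the classical case only as a by-product.
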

\noindent
Since a Riemannian manifold $M$ satisfies the condition $RCD^*(K,N)$ if and only if $\dim_{\sM}\leq N$ and $\ric_{\sM}\geq K$, and
since a spherical suspension that is a manifold without boundary, is a sphere, our result also covers Obata's theorem. 
In the context of Alexandrov spaces with curvature bounded from below an Obata type theorem was proven by Qian, Zhang and Zhu \cite{qzz}. They use a different notion of generalized lower Ricci curvature bound that implies a sharp curvature
dimension condition and is inspired by
Petrunin's second variation formula \cite{petruninsecondvariation}.

In the next section we briefly introduce important definitions. In section 3 we compute the Hessian of an eigenfunction $u$ and obtain further properties. In section 4 we 
will derive a gradient comparison result (Theorem \ref{wichtigeaussage}), and finally in section 5 we will prove the main Theorem \ref{AAA}. In section 6 we prove the result that bounds for the Hessian of a sufficiently smooth functions are equivalent 
to metric semi-convexity (Theorem \ref{convexity}), and we use it to obtain more detailed properties of the eigenfunction $u$ (Corollary \ref{bf}). This yields Corollary \ref{brbr}.  Then, in section 7 we also obtain higher eigenvalue rigidity results, and in section 8 we will give a brief outlook to 
the non-Riemannian situation.
\begin{ak}
I would like to thank Yu Kitabeppu for his interest in the results of this work and many fuitful discussions.  I also want to thank Luigi Ambrosio for his interest and for reading carefully an early version of the article.
\end{ak}

\section{Preliminaries}
\subsection{Curvature-dimension condition}
Let $(X,\de_{\sX})$ be a complete and separable metric space equipped with a locally finite Borel measure $\m_{\sX}$. 
The triple $(X,\de_{\sX},\m_{\sX})$ will be called \textit{metric measure space}. 
\smallskip

$(X,\de_{\sX})$ is called \textit{length space} 
if $\de_{\sX}(x,y)=\inf \mbox{L} (\gamma)$ for all $x,y\in X$, 
where the infimum runs over all absolutely continuous curves $\gamma$ in $X$ connecting $x$ and $y$ and $\mbox{L}(\gamma)$ is the length of $\gamma$. 
$(X,\de_{\sX})$ is called \textit{geodesic space} 
if every two points $x,y\in X$ are connected by a curve $\gamma$ such that $\de_{\sX}(x,y)=\mbox{L}(\gamma)$.
Distance minimizing curves of constant speed are called \textit{geodesics}. 
A length space, which is complete and locally compact, is a geodesic space.
$(X,\de_{\sX})$ is called \textit{non-branching} 
if for every quadruple $(z,x_0,x_1,x_2)$ of points in $X$ for which $z$ is a midpoint of $x_0$ and $x_1$ as well as of $x_0$ and $x_2$, it follows that $x_1=x_2$.
\smallskip

We say a function $f:(X,\de_{\sX})\rightarrow \mathbb{R}\cup\left\{\infty\right\}$ is \textit{$Kf$-concave} if for any geodesic $\gamma:[0,1]\rightarrow X$ the composition $f\circ\gamma$ satisfies 
$
u''+K\theta^2u\leq 0
$
in the distributional sense where $\theta=\mbox{L}(\gamma)$. We say that $f$ is \textit{weakly $Kf$-concave} if there exists a geodesic such 
that the previous differential inequality holds. We say $f$ is $Kf$-convex (weakly $Kf$-convex) if $-f$ is $Kf$-concave (weakly $Kf$-concave). In the same way
we define $K$-convexity (concavity) and weak convexity (concavity). If $f:X\rightarrow \mathbb{R}$ is convex and concave, we say it is affine.
\smallskip

$\mathcal{P}_2(X)$ denotes the \textit{$L^2$-Wasserstein space} of probability measures $\mu$ on $(\X,\de_{\sX})$ with finite second moments.
The \textit{$L^2$-Wasserstein distance} $\de_{W}(\mu_0,\mu_1)$ between two probability measures
$\mu_0,\mu_1\in\mathcal{P}_2(\X,\dx)$ is defined as
\begin{equation}\label{wassersteindist}
\de_{W}(\mu_0,\mu_1)^2={\inf_{\pi}\int_{\X\times \X}\dx^2(x,y)\,d\pi(x,y)
%:\text{$\pi$ coupling of $\mu_0$ and $\mu_1$}
}.
\end{equation}
Here the infimum ranges over all \textit{couplings} of $\mu_0$ and $\mu_1$, 
i.e. over all probability measures on $\X\times \X$ with marginals $\mu_0$ and $\mu_1$. $(\mathcal{P}_2(X),\de_{W})$ is a 
complete separable metric space. The subspace of $\m_{\sX}$-absolutely continuous probability measures is denoted by $\mathcal{P}_2(\X,\m_{\sX})$.
A minimizer of (\ref{wassersteindist}) always exists and is called \textit{optimal coupling} between $\mu_0$ and $\mu_1$. 
\smallskip

%A probability measure $\Pi$ on $\Gamma(\X)$ - the set of geodesics in $X$ - is called \emph{dynamical optimal transference plan} if and only if the probability measure $(e_0,e_1)_*\Pi$ on $\X\times \X$ is an optimal coupling of the probability measures
%$(e_0)_*\Pi$ and $(e_1)_*\Pi$ on $\X$.
%Here and in the sequel $e_t:\Gamma(\X)\to \X$ for $t\in[0,1]$ denotes the evaluation map $\gamma\mapsto \gamma_t$. An absolutely continuous curve $\mu_t$ in $\mathcal{P}_2(X,\dx,\m_{\sX})$ is a geodesic if and only if there is a dynamical optimal
%transference plan $\Pi$ such that $(e_t)_*\Pi=\mu_t$.
%\smallskip
%

%The \textit{relative entropy} $\Ent:\mathcal{P}^2(X)\rightarrow [-\infty,\infty]$ with respect to $\m_{\sX}$ is defined as 
%\begin{align*}
%\Ent_{\m_{\sX}}(\mu):=\begin{cases}
%\lim_{\epsilon\rightarrow 0}\int_{\rho>\epsilon}\rho \log \rho d\m_{\sX}& \mbox{ if }\mu=\rho d\m_{\sX}\\
%\infty & \mbox{ otherwise }
%\end{cases}
%\end{align*}
%We set $D(\Ent_{\m_{\sX}}):=\left\{\mu \in \mathcal{P}^2(X): \Ent_{\m_{\sX}}(\mu)<\infty\right\}$. We always assume that $(X,\de_{\sX},\m_{\sX})$ satisfies the following volume growth condition
%\begin{align}\label{growth}
%\int_{X}e^{-c\de(x_0,x)^2}d\m_{\sX}<\infty
%\end{align}
%for some point $x_0\in X$.
%\begin{definition}[\cite{stugeo1,lottvillani}]
%A metric measure space $(X,\de_{\sX},\m_{\sX})$ satisfies the curvature condition $CD(\kappa,\infty)$ for $\kappa\in \mathbb{R}$ in the sense of Lott, Sturm and Villani if $\Ent_{\m_{\sX}}$ is weakly $\kappa$-convex.
%\end{definition}
\begin{definition}[Reduced curvature-dimension condition, \cite{bast}]\label{cdcondition}
Let $(X,\de_{\sX},\m_{\sX})$ be a metric measure space. 
%such that $X\neq \left\{\mbox{single point}\right\}$. 
It satisfies the condition $CD^*(K,N)$ for $K\in\mathbb{R}$ and $N\in[1,\infty)$ if for each pair
$\mu_0,\mu_1\in\mathcal{P}_2(X,\m_{\sX})$ there exists an optimal
coupling $q$ of $\mu_0=\rho_0\m_{\sX}$ and $\mu_1=\rho_1\m_{\sX}$ and a geodesic 
$\mu_t=\rho_t \m_{\sX}$ in $\mathcal{P}_2(X,\m_{\sX})$ connecting them such that
\begin{align*}
\int_{X}\rho_t^{-{\scriptscriptstyle \frac{1}{N'}}}\rho_td\m_{\sX}
\ge\int_{X\times
X}\!\big[\sigma^{(1-t)}_{K,N'}(\de_{\sX})\rho^{-\frac{1}{N'}}_0(x_0)+
\sigma^{(t)}_{K,N'}(\de_{\sX})\rho^{-\frac{1}{N'}}_1(x_1)\big]dq(x_0,x_1)
\end{align*}
for all $t\in (0,1)$ and all $N'\geq N$ where $\de_{\sX}:=\de_{\sX}(x_0,x_1)$.
In the case $K>0$, the \textit{volume distortion coefficients} $\sigma^{(t)}_{K,N}(\cdot)$
for  $t\in (0,1)$  are defined by
\begin{align*}
\sigma_{K,N}^{(t)}(\theta)=\textstyle{\frac {\sin_{K/N}\left(\theta t\right)} {\sin_{K/N}\left(\theta\right)} }
\end{align*}
if $0\le\theta< \scriptstyle{\sqrt{\frac{N}K}\pi}$ and by $\sigma_{K,N}^{(t)}(\theta)=\infty$ if $K\theta^2\geq{N}\pi^2$.
The generalized $\sin$-functions $\sin_{K}$ are defined by
$$
\sin_{K}(t)=\sin(\sqrt{\textstyle{K}}t) \ \ \ \mbox{ for } \ K\in \mathbb{R}.
$$
The generalized $\cos$-functions are $\cos_{K}=\sin_{K}'$.
If $K\theta^2=0$, one sets $\sigma_{0,N}^{(t)}(\theta)=t$, and in the case $K<0$ one has to 
replace $\sin\scriptscriptstyle{\left(\sqrt{\frac K{N}}-\right)}$ by $\sinh\scriptscriptstyle{\left(\sqrt{\frac{-K}{N}}-\right)}$. 
\end{definition}
\subsection{Riemannian curvature-dimension condition}
We introduce some notations for the calculus on metric measure spaces. For more details see for instance \cite{agsriemannian,agsheat,agsbakryemery}. Let $L^2(\m_{\sX})=L^2(X)$ be the Lesbegue-space of $(X,\de_{\sX},\m_{\sX})$.
For a function $u:X\rightarrow \mathbb{R}\cup \left\{\pm\infty\right\}$ the local Lipschitz constant is denoted by $\mbox{Lip}:X\rightarrow [0,\infty]$.
%\begin{align*}
%\mbox{Lip}\ u[x]=\limsup_{y\rightarrow x}\frac{|f(x)-f(y)|}{\de_{\sX}(x,y)}.
%\end{align*}
It is finite, if $u\in \mbox{Lip}(X)$ - the space of Lipschitz continuous functions on $(X,\de_{\sX})$.
For  $u\in L^2(\m_{\sX})$ the \textit{Cheeger energy} is defined by 
\begin{align}\label{liggett}
 2\ChX(u)=\liminf_{\underset{v \underset{\scriptscriptstyle{L^2}}{\longrightarrow} u}{v\in \lip(X)}}\int_{\sX}\left(\lip v\right)^2d\m_{\sX}.
\end{align}
If $\ChX(u)<\infty$, we say $u\in D(\ChX)$. We also use the notation $D(\ChX)=W^{1,2}(X)=W^{1,2}(\m_{\sX})$. If $\ChX(u)<\infty$, then
\begin{align}
2\ChX(u)=\int_{\sX}|\nabla u|_w^2d\m_{\sX}
\end{align}
where $|\nabla u|_w\in L^2(\m_{\sX}\!)$ is the \textit{minimal weak upper gradient} of $u$. The definition can be found in \cite{agsriemannian}. 
$W^{1,2}(X)$ equipped with the norm 
$
\left\|u\right\|_{W^{1,2}}^2=\left\|u\right\|_{L^2}^2+2\ChX(u)
$
becomes a Banach space. Since the the minimal weak upper gradient is local, there is also a well-defined space of local Sobolov functions $W^{1,2}_{loc}(X)$.
The Cheeger energy is convex and lower semi-continuous but it is not a quadratic form in general. A metric measure space is said to be infinetisimally Hilbertian if $\ChX$ is quadratic.
\begin{definition}[\cite{erbarkuwadasturm, agsriemannian, giglistructure}]
A metric measure space 
$(X,\de_{\sX},\m_{\sX})$ 
satisfies the (reduced) Riemannian curvature-dimension condition $RCD^*(K,N)$ if 
$(X,\de_{\sX},\m_{\sX})$ is infinitesimally Hilbertian and 
satisfies the condition $CD^*(K,N)$.
\end{definition}
\begin{remark}
If $X$ satisfies a Riemannian curvature-dimension condition, $\ChX$ is a strongly local and regular Dirichlet form and the set of Lipschitz functions in $W^{1,2}(X)$ is dense in 
$W^{1,2}(X)$ with respect to $\left\|\ \cdot \ \right\|_{W^{1,2}}$.
There is a bilinear and symmetric map $\langle\nabla\cdot,\nabla\cdot\rangle:W^{1,2}(X)\times W^{1,2}(X)\rightarrow L^{1}(\m_{\sX})$
% that is defined by
%\begin{align*}
%\langle\nabla u,\nabla v\rangle=\lim_{\epsilon\rightarrow 0}\frac{|\nabla(u+\epsilon v)|^2-|\nabla u|^2}{2\epsilon}
%\end{align*}
where $\langle\nabla u,\nabla u\rangle=|\nabla u|^2$. For the rest of the article we assume that $X$ is infinitesimally Hilbertian.
\end{remark}
\begin{theorem}[Generalized Bonnet-Myers Theorem]\label{bonnet}
Assume that a metric measure space  $(X,\de_{\sX},\m_{\sX})$ satisfies $RCD^*(K,N)$ for some $K>0$ and $N> 1$. 
Then the diameter of $(X,\de_{\sX})$ is bounded by $\pi\scriptstyle{\sqrt{\frac{N-1}{K}}}$.
\end{theorem}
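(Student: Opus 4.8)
The statement is the standard generalized Bonnet--Myers theorem, and I would present two complementary routes. \emph{Route A (optimal transport).} Given $x_0,x_1\in\supp\m_{\sX}$ with $\dx(x_0,x_1)=L$, one tests the $CD^*(K,N)$ inequality of Definition \ref{cdcondition} with $\mu_0,\mu_1$ the normalized restrictions of $\m_{\sX}$ to small balls around $x_0,x_1$ and lets the radii shrink. For every $N'\ge N$ the distortion coefficient $\sigma^{(t)}_{K,N'}(\theta)$ equals $+\infty$ as soon as $K\theta^2\ge N'\pi^2$, so if $L\ge\pi\sqrt{N/K}$ the inequality with $N'=N$ degenerates to ``(finite)$\,\ge\,+\infty$'', a contradiction; this already yields $\diam_{\sX}\le\pi\sqrt{N/K}$. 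Upgrading the constant to the sharp $\pi\sqrt{(N-1)/K}$ can then be carried out either by invoking the equivalence of $CD^*(K,N)$ with $CD(K,N)$ on essentially non-branching spaces (which $RCD^*$ spaces are) together with the sharp bound for the genuine $CD(K,N)$ condition \cite{bast,lottvillanilichnerowicz}, or by a localization argument: a needle decomposition of $\m_{\sX}$ subordinate to the $1$-Lipschitz function $\dx(x_0,\cdot)$ writes $\m_{\sX}$ as a superposition of one-dimensional models $h_q\,\mathcal{L}^1$ along minimizing segments issuing from $x_0$, with $h_q^{1/(N-1)}$ being $\tfrac{K}{N-1}$-concave; a non-negative $\tfrac{K}{N-1}$-concave function is supported on an interval of length at most $\pi\sqrt{(N-1)/K}$ (by comparison with $\sin_{K/(N-1)}$), so every such segment, hence $\dx(x_0,x_1)$, is at most $\pi\sqrt{(N-1)/K}$.

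\emph{Route B (Laplacian and maximum principle), in the spirit of this paper.} Write $D:=\pi\sqrt{(N-1)/K}$ and $\kappa:=K/(N-1)$. The key input, available from \cite{ketterer2} and the general $RCD^*$ calculus, is the Laplacian comparison theorem: with $u_0:=\dx(x_0,\cdot)$ one has $|\nabla u_0|=1$ $\m_{\sX}$-a.e. and, in the sense of measures on $\{0<u_0<D\}$,
\[
L^{\sX}u_0\ \le\ (N-1)\sqrt{\kappa}\,\cot\!\big(\sqrt{\kappa}\,u_0\big)\,\m_{\sX}.
\]
Suppose $\diam_{\sX}>D$. Replacing a far point by a suitable interior point of a minimizing geodesic, we may assume there exist $x_0,x_1$ with $r_1:=\dx(x_0,x_1)\in(D,2D)$ and a unit-speed minimizer $\gamma\colon[0,r_1]\to X$ from $x_0$ to $x_1$. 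Put $u_1:=\dx(x_1,\cdot)$; by the triangle inequality $w:=u_0+u_1\ge r_1$ on $X$, with $w=r_1$ on $\gamma$. Fix a small $\varepsilon>0$, set $p_\varepsilon:=\gamma(D-\varepsilon)$, and pick $B:=B_\eta(p_\varepsilon)$ with $0<\eta<\varepsilon$ so small that $u_0\in(D-2\varepsilon,D)$ and $u_1\in(r_1-D,\,r_1-D+2\varepsilon)\subset(0,D)$ throughout $B$. Then the comparison applies to both $u_0$ and $u_1$ on $B$ and, by linearity of $L^{\sX}$ (infinitesimal Hilbertianity),
\[
L^{\sX}w\ \le\ (N-1)\sqrt{\kappa}\,\big[\cot(\sqrt{\kappa}\,u_0)+\cot(\sqrt{\kappa}\,u_1)\big]\,\m_{\sX}\quad\text{on }B.
\]
As $\varepsilon\downarrow0$ the first summand tends to $-\infty$ uniformly on $B$ while the second stays bounded (here $r_1<2D$ is used), so for $\varepsilon$ small enough $L^{\sX}w\le -c\,\m_{\sX}$ on $B$ for some $c>0$. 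Hence $v:=w-r_1\ge0$ is strictly superharmonic on $B$ and attains its minimum value $0$ at the interior point $p_\varepsilon$; the strong minimum principle for superharmonic functions --- the same circle of ideas underlying Theorem \ref{wichtigeaussage} --- forces $v\equiv0$ on $B$, whence $L^{\sX}v=0$ on $B$, contradicting $L^{\sX}v\le-c\,\m_{\sX}<0$. Therefore $\diam_{\sX}\le D$.

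\emph{Main obstacle.} The delicate point is the \emph{sharp} value of the constant. In Route A the elementary argument only reaches $\pi\sqrt{N/K}$, and closing the gap down to $\pi\sqrt{(N-1)/K}$ is precisely the nontrivial step --- it rests either on the $CD^*=CD$ equivalence for essentially non-branching spaces or on the needle analysis above. In Route B the analytic content is concentrated in the Laplacian comparison theorem for the distance function in the non-smooth setting (existence of a measure-valued Laplacian for $\dx(x_0,\cdot)$, the identity $|\nabla\dx(x_0,\cdot)|=1$, and the sharp cotangent bound) and in the availability of a strong minimum principle on $RCD^*$ spaces; both are by now standard, and Route B is in fact the non-rigid companion of the maximal diameter theorem of \cite{ketterer2}.
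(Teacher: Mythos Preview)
The paper does not prove Theorem~\ref{bonnet}; it is stated without proof in the preliminaries as a known background result. There is therefore no ``paper's proof'' to compare your proposal against.

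That said, both of your routes are essentially correct, and you have correctly isolated the only nontrivial point: obtaining the \emph{sharp} constant $\pi\sqrt{(N-1)/K}$ rather than $\pi\sqrt{N/K}$. In Route~A the soft argument from Definition~\ref{cdcondition} indeed stops at $\pi\sqrt{N/K}$, since the $\sigma$-coefficients blow up only when $K\theta^2\ge N\pi^2$; the upgrades you list ($CD^*\!\Leftrightarrow CD$ on essentially non-branching spaces, or needle decomposition) are the standard ways to close the gap. Route~B is the Calabi trick and works as written once one has the sharp Laplacian comparison $L^{\sX}\dx(x_0,\cdot)\le(N-1)\sqrt{\kappa}\cot(\sqrt{\kappa}\,\dx(x_0,\cdot))\,\m_{\sX}$. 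Note, however, that this comparison in \cite{giglistructure} is established under the genuine $CD(K,N)$ condition (via the $\tau$-coefficients), not directly under $CD^*(K,N)$; so Route~B also tacitly relies on the $CD^*\!\Rightarrow CD$ upgrade for $RCD^*$ spaces and does not bypass the obstacle you identified---it merely relocates it one step earlier. Two minor corrections: the primary reference for the Laplacian comparison is \cite{giglistructure}, not \cite{ketterer2}; and the principle you invoke at the end is Theorem~\ref{max} (through Theorem~\ref{min}), of which Theorem~\ref{wichtigeaussage} is a downstream application rather than the source.
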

\subsection{Bakry-Emery condition}
There is a self-adjoint, negative-definite operator $\left(L^{\sX}, D_2(L^{\sX})\right)$ on $L^2(\m_{\sX})$ that is defined via integration by parts.
Its domain is 
$$D_2(L^{\sX})=\left\{u\in D(\mathcal{E}^{\sX})\!:\exists v\in L^2(\m_{\sX}):-(v,w)_{L^2(\m_{\sX})}=\mathcal{E}^{\sX}(u,w)\ \forall w\in D(\mathcal{E}^{\sX})\right\}.$$ 
We set $v=:L^{\sX}u$.
$D_2(L^{\sX})$ is dense in $L^2(\m_{\sX})$ and equipped with the topology given by the graph norm.
$L^{\sX}$ induces a strongly continuous Markov semi-group $(P^{\sX}_t)_{t\geq 0}$ on $L^2(X,m_{\sX})$.
The correspondence between form, operator and semi-group is standard (see \cite{fukushima}). 
\smallskip
\\
%In what follows we always assume that the metric measure space $(X,\de_{\sX},\m_{\sX})$ is infinitesimal Hilbert.
The $\Gamma_2$-operator is defined by 
\begin{align*}
2\Gamma^{\sX}_2(u,v;\phi)=\int_X\langle\nabla u,\nabla v\rangle L^{\sX}\phi d\m_{\sX}-\int_X\left[\langle \nabla u,\nabla L^{\sX}v\rangle +\langle\nabla v,\nabla L^{\sX}u\rangle\right]\phi d\m_{\sX}
\end{align*}
for $u,v\in D(\Gamma^{\sX}_2)\mbox{ and }\phi\in D^{\infty}_+(L^{\sX})$ where
$$
D(\Gamma^{\sX}_2):=\left\{u\in D(L^{\sX}): L^{\sX}u\in W^{1,2}(X)\right\}
$$
and 
$$
D^{\infty}_+(L^{\sX}):=\big\{\phi\in D^2(L^{\sX}): \phi, L^{\sX}\phi\in L^{\infty}(\m_{\sX}), \phi\geq 0\big\}.
$$
We set $\Gamma^{\sX}_2(u,u;\phi)=\Gamma^{\sX}_2(u;\phi)$.
\begin{definition}[Bakry-Emery curvature-dimension condition]\label{bakrycd2}
Let $K\in\mathbb{R}$ and $N\in [1,\infty]$. We say that $\ChX$ satisfies the Bakry-Emery curvature-dimension condition $BE(K,N)$ if 
for every $u\in D(\Gamma^{\sX}_2)\mbox{ and }\phi\in D^{\infty}_+(L^{\sX})$
\begin{align}\label{crucialestimate}
\Gamma^{\sX}_2(u;\phi)\geq K \int_X|\nabla u|^2\phi d\m_{\sX}+\frac{1}{N}\int_{\sX}(L^{\sX}u)^2\phi d\m_{\sX}.
\end{align}
The implications $BE(K,N)\Rightarrow BE(K,N')\Rightarrow BE(K,\infty)$ for $N'\geq N$ hold.
\end{definition}
\begin{assumption}\label{TheAss}
$(X=\supp{\m_{\sX}},\de_{\sX},\m_{\sX})$ is a geodesic metric measure space. 
Every $u\in W^{1,2}(X)$ with $|\nabla u|_w\leq 1$ a.e. 
admits a $1$-Lipschitz version $\tilde{u}$. This regularity assumption is always satisfied for $RCD^*$-spaces.
\end{assumption}
\noindent

\begin{theorem}[\cite{agsbakryemery, erbarkuwadasturm}]\label{theorembochner}Let $K\in \mathbb{R}$ and $N\in [1,\infty]$.
Let $(X,\de_{\sX},\m_{\sX})$ be a metric measure space that satisfies the condition $RCD^*(K,N)$. Then
\begin{itemize}
 \item[(1)] $BE(K,N)$ holds for $\ChX$.
\end{itemize}
Moreover, if $(X,\de_{\sX},\m_{\sX})$ is a metric measure space that is infinitesimal Hilbertian, satisfies the Assumption \ref{TheAss} and $\ChX$ satisfies the condition $BE(K,N)$,
then 
\begin{itemize}
 \item[(2)] $(X,\de_{\sX},\m_{\sX})$ satisfies $CD^*(K,N)$, i.e.
the condition $RCD^*(K,N)$.
\end{itemize}
\end{theorem}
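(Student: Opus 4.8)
This is the Eulerian--Lagrangian equivalence for curvature-dimension bounds; for $N=\infty$ it is due to Ambrosio--Gigli--Savar\'e \cite{agsbakryemery} and for finite $N$ to Erbar--Kuwada--Sturm \cite{erbarkuwadasturm}; we only outline the strategy, as the full arguments are long. The object carrying all the estimates is the heat flow, which under infinitesimal Hilbertianity leads a double life: the linear Markov semigroup $(P^{\sX}_t)_{t\ge 0}$ generated by $L^{\sX}$ coincides with the $L^2$-Wasserstein gradient flow of the relative entropy $\Ent(\cdot\,|\,\m_{\sX})$ on $\mathcal{P}_2(X)$. For direction (1), i.e. $RCD^*(K,N)\Rightarrow BE(K,N)$, the plan is: (a) reformulate $CD^*(K,N)$ as the entropic condition $CD^e(K,N)$, namely that along every $W_2$-geodesic $(\mu_t)_{t\in[0,1]}$ the function $t\mapsto U_N(\mu_t):=\exp\bigl(-\tfrac1N\Ent(\mu_t\,|\,\m_{\sX})\bigr)$ satisfies $\tfrac{d^2}{dt^2}U_N(\mu_t)+\tfrac{K}{N}\de_W(\mu_0,\mu_1)^2\,U_N(\mu_t)\le 0$ distributionally; this is a direct manipulation of the distortion coefficients $\sigma^{(t)}_{K,N'}$ plus Jensen's inequality. (b) Combine $CD^e(K,N)$ with infinitesimal Hilbertianity to identify $(P^{\sX}_t)$ as the $EVI_{K,N}$-gradient flow of $\Ent$, the linear $K$-contractive semigroup of the Dirichlet form being forced to be this flow while the dimensional term is tracked along the interpolation. (c) Apply the dimensional Kuwada duality: the $EVI_{K,N}$ estimate for $P^{\sX}_t$ --- equivalently the dimensional $W_2$-action/contraction estimate --- is dual to the pointwise Bakry--Emery estimate $|\nabla P^{\sX}_t u|^2 \le e^{-2Kt}P^{\sX}_t|\nabla u|^2$ together with its dimensional reinforcement by a term of the form $\tfrac1N(L^{\sX}P^{\sX}_t u)^2$. (d) Differentiate the reinforced estimate at $t=0$ and integrate against $\phi\in D^{\infty}_+(L^{\sX})$; first replacing $u$ by $P^{\sX}_\varepsilon u$ so that the second-order terms are legitimate and then letting $\varepsilon\downarrow0$ gives exactly \eqref{crucialestimate}.

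For direction (2), i.e. $BE(K,N)$ together with infinitesimal Hilbertianity and Assumption \ref{TheAss} $\Rightarrow CD^*(K,N)$, the same chain is run backwards. (a) From the weak Bochner inequality \eqref{crucialestimate} a self-improvement/approximation argument --- testing against $P^{\sX}_s$-regularizations and using the density of $\{P^{\sX}_t u:\, u\in L^2\cap L^\infty\}$ --- produces the pointwise dimensional gradient estimates for $P^{\sX}_t$. (b) Assumption \ref{TheAss} (the Sobolev-to-Lipschitz property) is used precisely here to convert $|\nabla\cdot|_w$-bounds into genuine local Lipschitz bounds and so make contact with $\de_W$; via Kuwada's lemma the gradient estimates turn into $W_2$-action estimates along the curves $t\mapsto P^{\sX}_t\mu$. (c) These are upgraded to $EVI_{K,N}$ for the heat flow, and (d) a Daneri--Savar\'e-type argument then shows that an $EVI_{K,N}$-gradient flow forces $CD^e(K,N)$: inserting a $W_2$-geodesic into the $EVI$ and optimizing recovers the distributional inequality for $U_N$ along it. (e) Finally $CD^e(K,N)\Rightarrow CD^*(K,N)$ by the same distortion-coefficient bookkeeping as in (1a), which can be carried out without a non-branching assumption.

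The real difficulty in both directions is the second-order calculus: $\Gamma^{\sX}_2$ is available only in the weak, integrated form \eqref{crucialestimate}, so every place where one would like to differentiate twice must be routed through the semigroup $P^{\sX}_t$ together with a density argument, and one must check that the regularizations converge in the right topologies. The two genuinely delicate inputs are the dimensional Kuwada duality --- matching the nonlinear $\frs_{K/N}$-type functionals on the transport side with the $\tfrac1N(L^{\sX}u)^2$-term on the energy side --- and the Daneri--Savar\'e reconstruction of geodesic convexity of $\Ent$ from $EVI_{K,N}$ in the finite-dimensional regime; the transport step $CD^*\Leftrightarrow CD^e$ is comparatively elementary. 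We refer to \cite{agsbakryemery} for $N=\infty$ and \cite{erbarkuwadasturm} for $N<\infty$ for the complete proofs.
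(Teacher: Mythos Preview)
The paper does not give its own proof of this theorem: it is stated as a background result with attribution to \cite{agsbakryemery} and \cite{erbarkuwadasturm}, so there is nothing to compare your argument against on the paper's side. Your sketch is a faithful summary of the Erbar--Kuwada--Sturm strategy (entropic reformulation $CD^e(K,N)$, the $EVI_{K,N}$ characterization of the heat flow, dimensional Kuwada duality, and the Daneri--Savar\'e-type passage from $EVI$ back to convexity of the entropy), and is appropriate as an outline of the cited proofs.
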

\begin{remark}
In the setting of $RCD(K,N)$-spaces with finite $N$, it is know 
that the semi-group $P^{\sX}_t$ is $L^2$-$L^{\infty}$-ultra-contractive (see \cite[Proposition 4]{agsriemannian}).
\end{remark}
\begin{theorem}\label{dist}
Let $X$ be a metric measure space that satisfies $RCD^*(K,N)$ for $K\in \mathbb{R}$ and $N>1$. Then it follows for all $x,y\in X$ that
\begin{align*}
\de_{\sX}(x,y)=\sup\left\{u(x)-u(y): \ u\in W^{1,2}_{loc}(X) \mbox{ such that }|\nabla u|\leq 1\ \m_{\sX}\mbox{-a.e.}\right\}.
\end{align*}

\end{theorem}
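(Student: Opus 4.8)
\emph{Plan.} Denote by $S(x,y)$ the supremum on the right-hand side. We prove the inequalities $S(x,y)\ge\de_{\sX}(x,y)$ and $S(x,y)\le\de_{\sX}(x,y)$ separately; the first is elementary, the second substantial, the $RCD^*$-hypothesis entering there through Assumption \ref{TheAss}.

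\emph{Lower bound.} I would test the right-hand side with the single function $u:=\de_{\sX}(\cdot,y)$. By the triangle inequality $u$ is $1$-Lipschitz, hence $\lip u\le 1$ everywhere, so the minimal weak upper gradient satisfies $|\nabla u|_w\le\lip u\le 1$ $\m_{\sX}$-a.e. Moreover $u\in W^{1,2}_{loc}(X)$: $RCD^*(K,N)$-spaces with $N<\infty$ are proper, so every bounded open $\Omega$ has $\m_{\sX}(\Omega)<\infty$, whence $2\ChX(u|_{\Omega})\le\int_{\Omega}(\lip u)^2\,d\m_{\sX}\le\m_{\sX}(\Omega)<\infty$, i.e.\ $u|_{\Omega}\in W^{1,2}(\Omega)$. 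Since $u(x)-u(y)=\de_{\sX}(x,y)$, this single competitor already gives $S(x,y)\ge\de_{\sX}(x,y)$.

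\emph{Upper bound.} Let $u\in W^{1,2}_{loc}(X)$ with $|\nabla u|_w\le 1$ $\m_{\sX}$-a.e.; it suffices to show $u$ has a $1$-Lipschitz representative, for then $u(x)-u(y)\le\de_{\sX}(x,y)$ and taking the supremum concludes. If $X$ is compact — in particular if $K>0$, by the generalized Bonnet--Myers Theorem \ref{bonnet} — then $W^{1,2}_{loc}(X)=W^{1,2}(X)$, so $u\in W^{1,2}(X)$ and Assumption \ref{TheAss} yields the $1$-Lipschitz representative at once. In general one needs a \emph{local} version of the Sobolev-to-Lipschitz property together with the geodesic structure of $X$, and this is the crux. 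The plan is: (i) invoke that $RCD^*(K,N)$-spaces with $N<\infty$ are locally doubling and support a local Poincar\'e inequality, so that by the Sobolev--Morrey / Heinonen--Koskela theory a $W^{1,2}_{loc}$ function with weak upper gradient $\le 1$ has a representative which is $1$-Lipschitz in a neighbourhood of each point, i.e.\ $1$-Lipschitz for the intrinsic length distance; and (ii) upgrade this to a global bound because $X$ is geodesic: joining $x$ and $y$ by a minimizing geodesic $\gamma$, covering its compact image by finitely many balls on which the representative is $1$-Lipschitz, and chaining the estimates gives $|u(x)-u(y)|\le\mathrm{L}(\gamma)=\de_{\sX}(x,y)$.

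I expect the localization in step (i) — replacing the globally formulated Assumption \ref{TheAss} by a local one, which is forced by the fact that in a non-compact $X$ a bounded nonconstant function need not lie in $W^{1,2}(X)$ — to be the only delicate point; the geodesic chaining in (ii) and the lower bound are routine. Combining the two inequalities yields $S(x,y)=\de_{\sX}(x,y)$.
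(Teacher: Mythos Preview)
The paper does not prove this theorem: it is listed in the preliminaries as a known fact about $RCD^*$-spaces (the duality between the intrinsic distance and the Cheeger energy, which follows from the Sobolev-to-Lipschitz property established by Ambrosio--Gigli--Savar\'e), so there is no ``paper's proof'' to compare against. In the paper it is moreover only applied in the situation $K>0$, where by Bonnet--Myers (Theorem~\ref{bonnet}) the space is compact and your first, short argument via Assumption~\ref{TheAss} already closes the matter.

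Your outline for the general case is the standard one and is correct. The lower bound via $u=\de_{\sX}(\cdot,y)$ is exactly right. For the upper bound, the only genuine issue is the one you flag: Assumption~\ref{TheAss} is formulated for $W^{1,2}(X)$, not $W^{1,2}_{loc}(X)$. Your proposed route through the PI-space theory (local doubling and Poincar\'e on $RCD^*(K,N)$-spaces with $N<\infty$ give a locally $1$-Lipschitz representative, then chain along a geodesic) works; since $(X,\de_{\sX})$ is already geodesic, the ``intrinsic length distance'' in your step~(i) is $\de_{\sX}$ itself and step~(ii) is immediate once (i) is done.

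If you prefer to stay entirely within the paper's toolbox and avoid citing the PI-space machinery, there is a direct reduction to Assumption~\ref{TheAss}: given $u\in W^{1,2}_{loc}(X)$ with $|\nabla u|\le 1$, set $u_n=(-n)\vee u\wedge n$ (still $|\nabla u_n|\le 1$, now bounded), pick a Lipschitz cutoff $\phi_R$ with $\phi_R\equiv 1$ on $B_R$, $\supp\phi_R\subset B_{2R}$ and $|\nabla\phi_R|\le 1/R$, and apply Assumption~\ref{TheAss} to $\phi_R\,(u_n-c)\in W^{1,2}(X)$ for a suitable constant $c$. This yields a $(1+2n/R)$-Lipschitz representative of $u_n$ on $B_R$; letting $R\to\infty$ gives a global $1$-Lipschitz representative of $u_n$, and then $n\to\infty$ gives one for $u$. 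Either way your concern in step~(i) is resolvable and the proof goes through.
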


\section{Hessian identity}
For the rest of the article let $(X,\de_{\sX},\m_{\sX})$ be metric measure space that satisfies the Riemannian curvature-dimension condition $RCD(K,N)$ for $K>0$ and $N\geq 1$.
\begin{definition}[The space $\mathbb{D}^{\sX}_{\infty}$]
We introduce another function space.
\begin{align*}
\mathbb{D}^{\sX}_{\infty}&=\left\{f\in D(L^{\sX})\cap L^{\infty}(\m_{\sX}):|\nabla f|\in L^{\infty}(\m_{\sX})\ \ \& \ \ L^{\sX}f\in W^{1,2}(X)\right\}
\end{align*}
By definition $\mathbb{D}^{\sX}_{\infty}\subset D(\Gamma_2^{\sX})$. 
In particular, $f$ is Lipschitz continuous. Savar\'e proved in \cite{savareself} the following implication:
\begin{align*}
u\in \mathbb{D}^{\sX}_{\infty}\ \ \Longrightarrow \ \  |\nabla u|^2\in W^{1,2}(X)\cap L^{\infty}(\m_{\sX})
%, \ \ \& \ \ |\nabla u|^2\in \mathbb{M}^{\sX}_{\infty}
\end{align*}
The regularization properties of $P^{\sX}_t$ also imply that $P^{\sX}_tL^2(\m_{\sX})\subset \mathbb{D}^{\sX}_{\infty}$. Hence, $\mathbb{D}_{\infty}^{\sX}$ is dense in $W^{1,2}(\m_{\sX})$ and $D^2(L^{\sX})$ 
(provided $X$ satisfies $RCD^*(K,N)$ for finite $N$). We follow the notation from \cite{savareself}.
\end{definition}
In \cite{giglistructure} Gigli studied the properties of Sobolev functions $f\in W^{1,2}_{loc}(X)$ that admit a measure-valued Laplacian. 
We briefly present his approach. We assume the space $X$ is compact. In this case Gigli's definition simplifies. 
For the general definition we refer to Definition 4.4 in \cite{giglistructure}.
\begin{definition}[\cite{giglistructure}]\label{measuredlap}
Let $\Omega$ be an open subset in $X$. We say that $u\in W^{1,2}_{loc}(X)$ is in the domain of the Laplacian in $\Omega$, $u\in D(\textbf{L},\Omega)$ if 
there exists a signed Radon measure $\mu=:\textbf{L}^{\sOmega}u$ on $\Omega$ such that for any 
Lipschitz function $f$ with $\supp f\subset \Omega'$ and $\Omega'\subset \Omega$ open, it holds
\begin{align*}
-\int_{\Omega}\langle \nabla f,\nabla u\rangle d\m_{\sX}=\int_{\Omega}fd\mu.
\end{align*}
The set of such test functions $f$ is denoted by $\mbox{Test}(\Omega)$.
There is also the following integration by parts formula (Lemma 4.26 in \cite{giglistructure}). If $u\in D(\textbf{L},\Omega)$ such that $\textbf{L}^{\sOmega}u=hd\m_{\sX}$ for $h\in L^2_{loc}(\m_{\sX}|_{\Omega})$, then for every $v\in W^{1,2}(X)$ with support in $\Omega$ it holds
\begin{align*}
-2\ChX(u,v)=-\int_{\Omega}\langle \nabla u,\nabla v\rangle d\m_{\sX}=\int_{\Omega}vhd\m_{\sX}.
\end{align*}
Hence, if $\Omega=X$, in this case we have $\textbf{L}^{\sX}u=L^{\sX}u$ and the definition is consistent with the previous definition that comes from Dirichlet form theory. Additionally, the definition is also consistent 
with the notion of measure-valued Laplacian that is 
used by Savar\'e in \cite{savareself} (at least if we assume the space is compact) where he uses the notation $\mathbb{M}_{\infty}^{\sX}=D(\textbf{L},X)$.
\end{definition}
\begin{remark}
A direct consequence of Definition \ref{measuredlap} is the following ``global-to-local'' property.
\begin{align*}
\Omega'\subset \Omega \mbox{ open }\&\ u\in D(\textbf{L},\Omega)\ \Longrightarrow \ u|_{\Omega'}\in D(\textbf{L},\Omega')\ \& \ \left(\textbf{L}^{\sOmega}u\right)|_{\Omega'}=\textbf{L}^{\sOmega'}(u|_{\Omega'}).
\end{align*}
\end{remark}
\begin{lemma}[Lemma 3.4 in \cite{savareself}]\label{sav}
If $\ChX$ satisfies $BE(\kappa,\infty)$ then for every $u\in \mathbb{D}_{\infty}^{\sX}$ we have $|\nabla u|^2\in \mathbb{M}_{\infty}^{\sX}$ with 
\begin{align*}
%\ChX(|\nabla u|^2)\leq - \int_X\left[2\kappa|\nabla u|^4+|\nabla u|^2\langle \nabla u,\nabla L^{\sX}\rangle\right]d\m_{\sX}\\
\frac{1}{2}\textbf{L}^{\sX}|\nabla u|^2-\langle \nabla u,\nabla L^{\sX}u\rangle\m_{\sX} \geq \kappa |\nabla u|^2 \m_{\sX}
\end{align*}
Moreover, $\mathbb{D}_{\infty}^{\sX}$ is an algebra (closed w.r.t. pointwise multiplication) and 
if $\textbf{f}=(f_i)_{i=1}^n\in \left[\mathbb{D}_{\infty}^{\sX}\right]^{n}$ then $\Phi(\textbf{f}\hspace{1pt})\in \mathbb{D}_{\infty}^{\sX}$ for every smooth function $\Phi:\mathbb{R}^n\rightarrow \mathbb{R}$ with $\Phi(0)=0$.
\end{lemma}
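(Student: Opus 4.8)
The plan is to read the inequality off the Bakry--Émery bound $BE(\kappa,\infty)$ after one integration by parts, then to promote the resulting estimate to a genuine measure by enlarging the class of admissible test functions via heat-semigroup regularization and a Riesz-type representation; the algebra and composition statements will then follow from the Leibniz and chain rules combined with the regularity input $|\nabla u|^2\in W^{1,2}(X)$ quoted above. \emph{Extracting the inequality.} Fix $u\in\mathbb{D}^{\sX}_{\infty}$. Since $|\nabla u|^2\in W^{1,2}(X)\cap L^{\infty}(\m_{\sX})$ (Savar\'e's implication quoted above), the defining property of $L^{\sX}$ rewrites $\int_X|\nabla u|^2L^{\sX}\phi\,d\m_{\sX}$ as $-\int_X\langle\nabla|\nabla u|^2,\nabla\phi\rangle\,d\m_{\sX}$ for $\phi\in D^{\infty}_+(L^{\sX})$. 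Substituting this into the definition of $\Gamma^{\sX}_2(u;\phi)$ and using $\Gamma^{\sX}_2(u;\phi)\geq\kappa\int_X|\nabla u|^2\phi\,d\m_{\sX}$ gives, for all such $\phi$,
\begin{align*}
-\int_X\big\langle\nabla\big(\tfrac{1}{2}|\nabla u|^2\big),\nabla\phi\big\rangle\,d\m_{\sX}\ \geq\ \int_X\phi\,g\,d\m_{\sX},\qquad g:=\langle\nabla u,\nabla L^{\sX}u\rangle+\kappa|\nabla u|^2,
\end{align*}
where $g\in L^1(\m_{\sX})$ because $|\nabla u|\in L^{\infty}(\m_{\sX})$, $L^{\sX}u\in W^{1,2}(X)$, and the space is compact so $\m_{\sX}(X)<\infty$.

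\emph{From $D^{\infty}_+(L^{\sX})$ to $\lip(X)$, and Riesz.} I would next remove the restriction on $\phi$. For a nonnegative Lipschitz $\phi$, the Markov and regularizing properties of $P^{\sX}_t$ put $P^{\sX}_t\phi$ in $D^{\infty}_+(L^{\sX})$ with $P^{\sX}_t\phi\to\phi$ in $W^{1,2}(X)$ and boundedly in $L^{\infty}(\m_{\sX})$ as $t\downarrow 0$; passing to the limit in the displayed inequality (the left side by $W^{1,2}$-convergence, the right side by dominated convergence, as $g\in L^1$) shows that
\begin{align*}
\mathcal{T}(\phi):=-\int_X\big\langle\nabla\big(\tfrac{1}{2}|\nabla u|^2\big),\nabla\phi\big\rangle\,d\m_{\sX}-\int_X\phi\,g\,d\m_{\sX}\ \geq\ 0
\end{align*}
for every nonnegative $\phi\in\lip(X)$. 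Since $\mathcal{T}$ is linear, positive, and $\mathcal{T}(1)=-\int_X g\,d\m_{\sX}<\infty$, it is bounded, extends to $C(X)$ by uniform density of Lipschitz functions, and by the Riesz representation theorem equals $\phi\mapsto\int_X\phi\,d\nu$ for a nonnegative Radon measure $\nu$. Unwinding Definition~\ref{measuredlap}, this says exactly that $\tfrac{1}{2}|\nabla u|^2\in D(\textbf{L},X)=\mathbb{M}^{\sX}_{\infty}$ with $\textbf{L}^{\sX}\big(\tfrac{1}{2}|\nabla u|^2\big)=\nu+g\,\m_{\sX}$, i.e. $|\nabla u|^2\in\mathbb{M}^{\sX}_{\infty}$ and
\begin{align*}
\tfrac{1}{2}\textbf{L}^{\sX}|\nabla u|^2-\langle\nabla u,\nabla L^{\sX}u\rangle\,\m_{\sX}=\nu+\kappa|\nabla u|^2\,\m_{\sX}\ \geq\ \kappa|\nabla u|^2\,\m_{\sX},
\end{align*}
which is the asserted inequality.

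\emph{Algebra and chain rule.} For the remaining claims, by polarization it suffices for the algebra property to treat $f^2$ with $f\in\mathbb{D}^{\sX}_{\infty}$: the Leibniz rule gives $\nabla(f^2)=2f\nabla f\in L^{\infty}(\m_{\sX})$, while testing against $W^{1,2}$-functions yields $L^{\sX}(f^2)=2fL^{\sX}f+2|\nabla f|^2$, which lies in $W^{1,2}(X)$ since $\nabla(fL^{\sX}f)=L^{\sX}f\,\nabla f+f\,\nabla L^{\sX}f\in L^2(\m_{\sX})$ (using $\nabla f\in L^{\infty}$, $L^{\sX}f\in W^{1,2}$, $f\in L^{\infty}$) and $|\nabla f|^2\in W^{1,2}(X)$ by the quoted implication. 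For $\Phi(\textbf{f})$ with $\Phi$ smooth, $\Phi(0)=0$, and $\textbf{f}=(f_i)_{i=1}^n\in[\mathbb{D}^{\sX}_{\infty}]^n$, the ranges of the (Lipschitz) $f_i$ sit in a compact $Q\subset\mathbb{R}^n$ on which $\Phi,D\Phi,D^2\Phi$ are bounded; the chain rules $\nabla\Phi(\textbf{f})=\sum_i\partial_i\Phi(\textbf{f})\nabla f_i$ and $L^{\sX}\Phi(\textbf{f})=\sum_i\partial_i\Phi(\textbf{f})L^{\sX}f_i+\sum_{i,j}\partial^2_{ij}\Phi(\textbf{f})\langle\nabla f_i,\nabla f_j\rangle$ then place $\Phi(\textbf{f})$ in $L^{\infty}$ with bounded gradient and Laplacian in $W^{1,2}(X)$, once one knows $\langle\nabla f_i,\nabla f_j\rangle\in W^{1,2}(X)$ (polarize the quoted implication) and $\partial_i\Phi(\textbf{f}),\partial^2_{ij}\Phi(\textbf{f})\in W^{1,2}(X)\cap L^{\infty}(\m_{\sX})$ (the same composition statement for the smooth functions $\partial_i\Phi$, reached by approximating $\Phi$ near $Q$ in $C^2(Q)$ by polynomials without constant term and invoking the algebra property).

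\emph{Main obstacle.} The genuinely delicate step is the second one: the $\Gamma_2$-bound only controls a ``smeared'' Laplacian of $|\nabla u|^2$ against the small class $D^{\infty}_+(L^{\sX})$, and turning this into an honest signed Radon measure requires the semigroup regularization (Markov property, $L^2$--$L^{\infty}$ smoothing, $W^{1,2}$-continuity of $t\mapsto P^{\sX}_t\phi$) to reach all of $\lip(X)$ before applying Riesz — this is precisely where the structure beyond mere infinitesimal Hilbertianity is used. By contrast the chain-rule identities are routine given the quoted regularity, the only mild care being that $L^{\sX}f$ need not be bounded, so that $L^{\sX}(fg)\in W^{1,2}(X)$ must be seen through $\nabla f,\nabla g\in L^{\infty}$ rather than through boundedness of the Laplacians themselves.
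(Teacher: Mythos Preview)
The paper does not give its own proof of this lemma: it is quoted verbatim from Savar\'e \cite{savareself} (Lemma 3.4 there), so there is no in-paper argument to compare against. That said, your proposal is essentially correct and is in fact the outline of Savar\'e's own argument: once the hard regularity input $|\nabla u|^2\in W^{1,2}(X)\cap L^{\infty}(\m_{\sX})$ is available (this is the genuinely nontrivial step in \cite{savareself}, obtained there via a Caccioppoli-type inequality, and the paper quotes it separately just above the lemma), one integrates by parts in $\Gamma_2^{\sX}(u;\phi)$, enlarges the test class from $D^{\infty}_+(L^{\sX})$ to nonnegative Lipschitz functions by heat regularization, and applies Riesz to obtain the measure-valued Laplacian and the inequality; the algebra and chain-rule statements then follow from the Leibniz/chain formulas exactly as you wrote.

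Two small remarks. First, you correctly identify the delicate point as the passage from $D^{\infty}_+(L^{\sX})$ to $\lip(X)$; note that your use of $P^{\sX}_t\phi\in D^{\infty}_+(L^{\sX})$ (in particular $L^{\sX}P^{\sX}_t\phi\in L^{\infty}$) tacitly uses the $L^2$--$L^{\infty}$ ultracontractivity available here because $N<\infty$. Second, your comment that $\partial_i\Phi(\textbf{f})\in W^{1,2}\cap L^{\infty}$ via polynomial approximation is more than needed: since each $f_i$ is Lipschitz and bounded, the chain rule directly gives $\nabla\big(\partial_i\Phi(\textbf{f})\big)=\sum_j\partial^2_{ij}\Phi(\textbf{f})\nabla f_j\in L^{\infty}(\m_{\sX})$, hence $\partial_i\Phi(\textbf{f})$ is Lipschitz and in $W^{1,2}(X)$ on the compact space, which suffices to put $\partial_i\Phi(\textbf{f})\,L^{\sX}f_i$ in $W^{1,2}(X)$.
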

\noindent
As consequence of the previous lemma Savar\'e introduced the measure-valued $\Gamma_2^{\sX}$ for any $u\in \mathbb{D}_{\infty}^{\sX}$.
\begin{align*}
{\boldsymbol\Gamma}_2^{\sX}(u):=\frac{1}{2}{\boldsymbol L}^{\sX}|\nabla u|^2-\langle \nabla u,\nabla L^{\sX}u\rangle\m_{\sX}
\end{align*}
is a finite Borel measure that has finite total variance. ${\boldsymbol\Gamma}_2^{\sX}(u)$ can be Lebesgue decomposed with respect to $\m_{\sX}$, and we denote by $\gamma_2^{\sX}\in L^{1}(\m_{\sX})$ its density with respect to $\m_{\sX}$. If we follow Savar\'e in \cite{savareself}, we see that
\begin{align}\label{gamma2}
{\boldsymbol\Gamma}_2^{\sX}(u)\geq\gamma_2^{\sX}(u)\geq \kappa |\nabla u|^2 +\frac{1}{N}\left(L^{\sX}u\right)^2
\end{align}
provided the condition $RCD^*(K,N)$ holds (the singular part is non-negative).
\begin{definition}\label{hessianfirst}
%Set
%\begin{align*}
%D(H)&=\left\{u\in W^{1,2}(X): |\nabla u|\in W^{1,2}(X)\right\}
%\end{align*}
For $u\in \mathbb{D}^{\sX}_{\infty}$ we can define the Hessian. That is a bilinear, symmetric operator on $\mathbb{D}^{\sX}_{\infty}$.
\begin{align*}
H[u]:\mathbb{D}^{\sX}_{\infty}\times \mathbb{D}^{\sX}_{\infty}&\rightarrow L^1(\m_{\sX})\\
H[u](f,g)&=\frac{1}{2}\left(\langle \nabla f,\nabla\langle\nabla u,\nabla g\rangle\rangle+\langle \nabla g,\nabla\langle\nabla u,\nabla f\rangle\rangle-\langle \nabla u,\nabla\langle\nabla f,\nabla g\rangle\rangle\right).
\end{align*}
\end{definition}
\begin{lemma}[\cite{savareself}]
Let $\textbf{f}=(f_i)_{i=1}^{n}\in \left(\mathbb{D}_{\infty}^{\sX}\right)^n$ and let $\Phi\in C^3(\mathbb{R})$ with $\Phi(0)=0$. Then
\begin{align}\label{key}
\gamma_2^{\sX}(\Phi(\textbf{f}\hspace{1pt}))=&\sum_{i,j}\Phi_i(\textbf{f}\hspace{1pt})\Phi_j(\textbf{f}\hspace{1pt})\gamma_2^{\sX}(f_i,f_j)\nonumber\\
&+2\sum_{i,j,k}\Phi_i(\textbf{f}\hspace{1pt})\Phi_{j,k}(\textbf{f}\hspace{1pt})H[{f_i}](f_j,f_k)\nonumber\\
&+\sum_{i,j,k,h}\Phi_{i,k}(\textbf{f}\hspace{1pt})\Phi_{j,h}(\textbf{f}\hspace{1pt})\langle\nabla f_i,\nabla f_j\rangle\langle\nabla f_k,\nabla f_h\rangle.
\end{align}
\end{lemma}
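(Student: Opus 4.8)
The plan is to derive the identity \eqref{key} by applying the chain rule for the operators involved to the composite function $\Phi(\textbf{f})$, exactly as one would in the smooth setting, and then checking that each manipulation is justified by the closure properties established in Lemma \ref{sav}. First I would record the two basic chain rules available in $\mathbb{D}_{\infty}^{\sX}$: namely $\nabla \Phi(\textbf{f}) = \sum_i \Phi_i(\textbf{f})\nabla f_i$, so that $\langle \nabla \Phi(\textbf{f}),\nabla \Phi(\textbf{f})\rangle = \sum_{i,j}\Phi_i(\textbf{f})\Phi_j(\textbf{f})\langle \nabla f_i,\nabla f_j\rangle$, and the Laplacian chain rule $L^{\sX}\Phi(\textbf{f}) = \sum_i \Phi_i(\textbf{f})L^{\sX}f_i + \sum_{i,j}\Phi_{i,j}(\textbf{f})\langle \nabla f_i,\nabla f_j\rangle$. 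Both hold because $\mathbb{D}_{\infty}^{\sX}$ is an algebra that is stable under post-composition with $C^3$ functions vanishing at $0$ (Lemma \ref{sav}), so every term on the right-hand side lies in the appropriate function space and the defining integration-by-parts characterizations can be invoked.

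Next I would expand $\gamma_2^{\sX}(\Phi(\textbf{f}))$ using its definition as (the density of) $\tfrac12 \textbf{L}^{\sX}|\nabla \Phi(\textbf{f})|^2 - \langle \nabla \Phi(\textbf{f}),\nabla L^{\sX}\Phi(\textbf{f})\rangle\m_{\sX}$. Substituting the two chain rules above and using the Leibniz rule for $\nabla$ of a product, the computation splits into three groups of terms organized by how many derivatives of $\Phi$ appear. The group with exactly two first derivatives $\Phi_i\Phi_j$ reassembles, after using the definition of $\gamma_2^{\sX}(f_i,f_j)$, into $\sum_{i,j}\Phi_i(\textbf{f})\Phi_j(\textbf{f})\gamma_2^{\sX}(f_i,f_j)$; the cross terms pairing a first derivative with a second derivative collapse, via the definition of $H[f_i](f_j,f_k)$ in Definition \ref{hessianfirst}, into $2\sum_{i,j,k}\Phi_i\Phi_{j,k}H[f_i](f_j,f_k)$; and the terms with two second derivatives give the purely algebraic contribution $\sum_{i,j,k,h}\Phi_{i,k}\Phi_{j,h}\langle \nabla f_i,\nabla f_j\rangle\langle \nabla f_k,\nabla f_h\rangle$. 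The symmetrization built into the definition of $H[u]$ is precisely what is needed to absorb the asymmetry produced by the Leibniz expansion of the mixed terms.

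The main obstacle is not the formal bookkeeping but ensuring that all the differential-calculus rules used are legitimate at the level of $\m_{\sX}$-a.e.\ identities and measure-valued Laplacians, rather than merely formal. Concretely: one must know that $|\nabla \Phi(\textbf{f})|^2 \in \mathbb{M}_{\infty}^{\sX}$ and that its measure-valued Laplacian obeys the expected Leibniz/chain rules; that the various inner products $\langle \nabla f_i,\nabla f_j\rangle$ again lie in $\mathbb{D}_{\infty}^{\sX}$ (or at least in $W^{1,2}\cap L^\infty$) so that the Hessian terms $H[f_i](f_j,f_k)$ are well defined $\m_{\sX}$-a.e.; and that passing to densities with respect to $\m_{\sX}$ commutes with the algebraic operations, which is where one invokes that the singular parts are controlled and that Lemma \ref{sav} gives $|\nabla u|^2\in W^{1,2}(X)\cap L^\infty(\m_{\sX})$ for $u\in \mathbb{D}_{\infty}^{\sX}$. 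Once these regularity inputs are in place, \eqref{key} follows by collecting terms; I would present the derivation first for $\Phi$ polynomial (where everything is an honest iteration of the algebra property) and then extend to general $\Phi\in C^3$ with $\Phi(0)=0$ by the stability statement in Lemma \ref{sav} together with an approximation argument.
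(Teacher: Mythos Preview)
The paper does not give its own proof of this lemma; it is stated with a citation to Savar\'e \cite{savareself} and used as a black box. Your outline is essentially the argument Savar\'e carries out: expand $\gamma_2^{\sX}(\Phi(\textbf{f}))$ via the chain rules for $\nabla$ and $L^{\sX}$, group terms by the derivatives of $\Phi$ that appear, and identify the three resulting blocks with $\gamma_2^{\sX}(f_i,f_j)$, $H[f_i](f_j,f_k)$, and the purely first-order term, with the regularity inputs of Lemma~\ref{sav} justifying each manipulation at the level of measure-valued operators and $\m_{\sX}$-a.e.\ densities. So there is nothing to compare against in this paper, and your sketch is correct and matches the cited source.
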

%We introduce some more definitions from \cite{savareself}. We denote by $\mathbb{M}_{\infty}^{\sX}$ the space of $u\in W^{1,2}(X)\cap L^{\infty}(\m_{\sX})$ such there exists 
%a signed Radon measure $\mu=\mu_+-\mu_-$ with $\mu_+,\mu_-\in W^{-1,2}(\m_{\sX})=W^{1,2}(X)^{\star}$ such that 
%\begin{align*}
%-\ChX(u,\varphi)=\int_X\tilde{\varphi} d\mu_{\sX} \ \mbox{ for every }\varphi \in W^{1,2}(X).
%\end{align*}
%$\tilde{\varphi}$ is the $\ChX$-quasi-continuous representative of $\varphi$. We will write ${\boldsymbol L}^{}u=\mu$.
%\\
%\\
%\\
%
%
%
\begin{theorem}\label{crucial}
Let $(X,\de_{\sX},\m_{\sX})$ satisfy $RCD^*(K,N)$ for $K>0$ and $N> 1$.
Consider $u\in D(L^{\sX})$ such that $$L^{\sX}u=-\frac{K N}{N-1}u.$$
Then $u\in \mathbb{D}^{\sX}_{\infty}$ and $H[u](f,g)=-\textstyle{\frac{K}{N-1}} u\langle\nabla f,\nabla g\rangle$ $\m_{\sX}$-a.e. for any $f,g\in \mathbb{D}^{\sX}_{\infty}$, and $|\nabla u|^2+\frac{K}{N-1}u^2=const$ $\m_{\sX}$-a.e.\ .
\end{theorem}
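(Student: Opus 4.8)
The plan is to obtain Theorem~\ref{crucial} by a self-improvement of the Bakry--\'Emery inequality applied directly to the eigenfunction $u$, in the spirit of Savar\'e's calculus. First I would dispose of the regularity claim almost for free. Since $K>0$ and $N>1$, the space $(X,\de_{\sX})$ is compact by Theorem~\ref{bonnet}, and the relation $L^{\sX}u=-\frac{KN}{N-1}u$ makes $u$ an eigenvector of the semigroup, $P^{\sX}_tu=e^{-\frac{KN}{N-1}t}u$ for all $t>0$. Because $P^{\sX}_tL^2(\m_{\sX})\subset\mathbb{D}^{\sX}_{\infty}$ and $\mathbb{D}^{\sX}_{\infty}$ is a vector space, $u=e^{\frac{KN}{N-1}t}P^{\sX}_tu\in\mathbb{D}^{\sX}_{\infty}$; in particular $|\nabla u|^2\in W^{1,2}(X)\cap L^{\infty}(\m_{\sX})$ and Lemma~\ref{sav} applies, so all objects below are well defined.

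Next I would show that the integrated Bochner inequality is saturated. Since $X$ is compact, $\phi\equiv 1\in D^{\infty}_+(L^{\sX})$, and testing the $BE(K,N)$ inequality of Definition~\ref{bakrycd2} with $\phi=1$ has left-hand side $\Gamma^{\sX}_2(u;1)=-\int_X\langle\nabla u,\nabla L^{\sX}u\rangle\,d\m_{\sX}=\frac{KN}{N-1}\int_X|\nabla u|^2\,d\m_{\sX}$. Feeding the eigenvalue equation into the identities $\int_X|\nabla u|^2\,d\m_{\sX}=\frac{KN}{N-1}\int_X u^2\,d\m_{\sX}$ and $\int_X(L^{\sX}u)^2\,d\m_{\sX}=\big(\tfrac{KN}{N-1}\big)^2\int_X u^2\,d\m_{\sX}$, a one-line computation shows $\frac{KN}{N-1}\int_X|\nabla u|^2\,d\m_{\sX}=K\int_X|\nabla u|^2\,d\m_{\sX}+\frac1N\int_X(L^{\sX}u)^2\,d\m_{\sX}$, i.e.\ the bound from $BE(K,N)$ is an equality. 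Since ${\boldsymbol\Gamma}_2^{\sX}(u)(X)=\Gamma^{\sX}_2(u;1)$, ${\boldsymbol\Gamma}_2^{\sX}(u)$ has non-negative singular part, and the pointwise bound~(\ref{gamma2}) holds, comparison forces the singular part of ${\boldsymbol\Gamma}_2^{\sX}(u)$ to vanish and $\gamma_2^{\sX}(u)=K|\nabla u|^2+\frac1N(L^{\sX}u)^2$ $\m_{\sX}$-a.e.

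The heart of the argument is the self-improvement. Fixing $g\in\mathbb{D}^{\sX}_{\infty}$, for $s\in\mathbb R$ I would put $h_s:=u+\frac{s}{2}g^2\in\mathbb{D}^{\sX}_{\infty}$ into~(\ref{gamma2}), expand $\gamma_2^{\sX}(h_s)$ by the key identity~(\ref{key}) for $\Phi(x_1,x_2)=x_1+\frac{s}{2}x_2^2$, and use $L^{\sX}(g^2)=2gL^{\sX}g+2|\nabla g|^2$ and $\nabla(\tfrac12 g^2)=g\nabla g$. The inequality becomes a polynomial in $s$ of degree $\le 2$ that is nonnegative for all $s$ and whose constant term vanishes by the previous step; hence its coefficient of $s$ vanishes. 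Running the same device with the purely linear perturbation $\Phi=x_1+sx_2$ first yields the polarized equality $\gamma_2^{\sX}(u,g)=K\langle\nabla u,\nabla g\rangle+\frac1N L^{\sX}u\,L^{\sX}g$; substituting this into the coefficient of $s$ from the quadratic computation cancels everything except $2H[u](g,g)-\frac2N L^{\sX}u\,|\nabla g|^2$, so $H[u](g,g)=\frac1N L^{\sX}u\,|\nabla g|^2=-\frac{K}{N-1}u\,|\nabla g|^2$ a.e., and polarizing the symmetric bilinear form $H[u]$ gives $H[u](f,g)=-\frac{K}{N-1}u\langle\nabla f,\nabla g\rangle$ for all $f,g\in\mathbb{D}^{\sX}_{\infty}$. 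Finally, taking $f=u$ in Definition~\ref{hessianfirst} gives $H[u](u,g)=\frac12\langle\nabla g,\nabla|\nabla u|^2\rangle$, whence $\langle\nabla g,\nabla\big(|\nabla u|^2+\frac{K}{N-1}u^2\big)\rangle=0$ for every $g\in\mathbb{D}^{\sX}_{\infty}$; by density of $\mathbb{D}^{\sX}_{\infty}$ in $W^{1,2}(X)$ this forces $\ChX\big(|\nabla u|^2+\frac{K}{N-1}u^2\big)=0$, so $|\nabla u|^2+\frac{K}{N-1}u^2$ is constant, $X$ being connected as a geodesic space.

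I expect the main difficulty to be bookkeeping rather than conceptual. One must upgrade the $s$-parametrized family of $\m_{\sX}$-a.e.\ inequalities to a single inequality valid for all $s$ on a common full-measure set, which is standard (restrict $s$ to $\mathbb Q$, then use continuity in $s$), and one must invoke the chain-rule and algebra properties of $\mathbb{D}^{\sX}_{\infty}$ (closure under $\Phi$ with $\Phi(0)=0$, the Laplacian chain rule, bilinearity of $\gamma_2^{\sX}$ and of $H[u]$) that legitimize expanding~(\ref{key}); all of these are available in the cited references. The only genuinely non-routine point is recognizing that the quadratic perturbation $h_s=u+\frac s2 g^2$ (rather than a linear one) is exactly what makes the Hessian term $H[u](g,g)$ appear with a nonzero coefficient in~(\ref{key}).
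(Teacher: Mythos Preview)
Your proof is correct and follows essentially the same self-improvement strategy as the paper: both exploit the eigenfunction relation to saturate the integrated Bochner inequality and then use Savar\'e's $\gamma_2$-calculus (formula~(\ref{key})) to force the Hessian identity, with the constancy of $|\nabla u|^2+\tfrac{K}{N-1}u^2$ deduced identically. The only difference is organizational---the paper quotes the pointwise self-improved estimate $[H[u](f,g)-\tfrac1N\langle\nabla f,\nabla g\rangle L^{\sX}u]^2\le C\,[\gamma_2^{\sX}(u)-K|\nabla u|^2-\tfrac1N(L^{\sX}u)^2]$ from \cite{savareself,sturmconformal} and then integrates, whereas you first establish pointwise equality in~(\ref{gamma2}) and then run the linear and quadratic perturbations explicitly; these are two arrangements of the same computation.
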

\begin{proof}
The Bakry-Ledoux gradient estimate yiels immediately that  $u\in \mathbb{D}^{\sX}_{\infty}$. Hence $H[u](f,g)$ is well-defined for all $f,g\in \mathbb{D}^{\sX}_{\infty}$.
It follows that $|\nabla u|^2\in \mathbb{M}_{\infty}$ and $\gamma_2^{\sX}(u)$ exists.
Now, since there is the $\Gamma_2$-estimate (\ref{gamma2}) and since there is the statement of the previous lemma, we can perform exactly the same calculations as in the proof of 
Theorem 3.4 in \cite{savareself} to obtain a self-improved, sharp $\Gamma_2$-estimate that also involves terms depending on the dimension. 
These are precisely the calculations that Sturm did in \cite{sturmconformal} where the existence of a nice functional algebra is assumed. In our setting the result of the previous lemma is sufficient to do the same calculations. We get
%\begin{align*}
%&2\left[H[u](g,f)-\frac{1}{N}\langle\nabla g,\nabla f\rangle L^{\sX}u\right]^2\\
%&\leq \left[{\boldsymbol \Gamma}^{}_2(u)-\frac{1}{N}\left(L^{\sX}u\right)^2-(N-1)|\nabla u|^2\right]\underbrace{\left[\frac{N-2}{N}
%\langle\nabla g,\nabla f\rangle^2+|\nabla g|^2|\nabla f|^2\right]}_{=:C}
%\end{align*}
\begin{align*}
\Big[H[u](g,f)-{\textstyle\frac{1}{N}}\langle\nabla g,\nabla f\rangle L^{\sX}u\Big]^2\leq \frac{C}{2}\left[\gamma^{\sX}_2(u)-\textstyle{\frac{1}{N}}\big(L^{\sX}u\big)^2d-K|\nabla u|^2\right]
\end{align*}
where $
C:=\frac{2(N-1)}{N}|\nabla g|^2|\nabla f|^2>0\ \m_{\sX}\mbox{-a.e.}
$
if $N>1$ and $|\nabla f|,|\nabla g|>0$.
Integration with respect to $\m_{\sX}$ yields
\begin{align*}
&\int_X\frac{2}{C}\left[H[u](g,f)+\textstyle{\frac{K}{N-1}}\langle\nabla g,\nabla f\rangle u\right]^2d\m_{\sX}\\
&\hspace{0.9cm}\leq \int\gamma^{\sX}_2(u) d\m_{\sX}-K\int_X|\nabla u|^2d\m_{\sX}-\frac{1}{N}\int_X\left(L^{\sX}u\right)^2d\m_{\sX}\\
&\hspace{0.9cm}=0+\int_X\left(L^{\sX}u\right)^2d\m_{\sX}+K\int_XuL^{\sX}ud\m_{\sX}-\frac{1}{N}\int_X\left(L^{\sX}u\right)^2d\m_{\sX}\\
&\hspace{0.9cm}=\left[\textstyle{\frac{K^2N^2}{(N-1)^2}-\frac{K^2N}{N-1}-\frac{K^2N}{(N-1)^2}}\right]\int_X u^2d\m_{\sX}=0.
\end{align*}
Hence, we obtain the first result.
\smallskip
\\
Since $u\in \mathbb{D}_{\infty}^{\sX}$, we know that $\textstyle{\frac{K}{N-1}}u^2+|\nabla u|^2\in W^{1,2}(X)$. We observe that 
$$H[u](u,g)=\frac{1}{2}\langle \nabla g,\langle\nabla u,\nabla u\rangle\rangle=-\textstyle{\frac{K}{N-1}}u\langle\nabla u,\nabla g\rangle.$$ Hence
\begin{align*}
\langle \nabla (\textstyle{\frac{K}{N-1}}u^2+|\nabla u|^2),\nabla g\rangle= \textstyle{\frac{K}{N-1}}2u\langle\nabla u,\nabla g\rangle + \langle\nabla|\nabla u|^2,\nabla g\rangle=0
\end{align*}
for any $g\in \mathbb{D}^{\sX}_{\infty}$. It implies $\textstyle{\frac{K}{N-1}}u^2+|\nabla u|^2$ is constant $\m_{\sX}$-a.e.\ .
\end{proof}

\section{A gradient comparison result}
\begin{example}
We introduce $1$-dimensional model spaces. For $K>0$, $N\geq 1$ and some interval $[a,b]\subset I_{K/(N-1)}$ let us consider $\cos_{K/(N-1)}:[a,b]\rightarrow \mathbb{R}_{\geq 0}$ where
\begin{align*}
I_{K/(N-1)}=\textstyle{\left[-\frac{\pi}{2}{\scriptscriptstyle \sqrt{\frac{{(N-1)}}{{K}}}},\frac{\pi}{2}{\scriptscriptstyle \sqrt{\frac{{(N-1)}}{{K}}}}\right].}
\end{align*}
The metric measure space $([a,b], \m_{\sK,\sN})$ with $d\m_{\sK,\sN}=\cos^{\sN-1}_{\sK/(\sN-1)}rdr$ satisfies the condition $RCD^*(K,N)$ and for $u\in C^{\infty}([a,b])$ with Neumann boundary condition the generalized Laplacian is given by
\begin{align*}
L^{I_{K/(N-1)}}u=\frac{d^2}{dr^2}u-N\frac{\sin_{{\scriptscriptstyle K/(N-1)}}}{\cos_{{\scriptscriptstyle K/(N-1)}}}\frac{d}{dr}u.
\end{align*}
If $[a,b]=I_{K/(N-1)}$, one can check that $-\sin_{K/(N-1)}:I_{K/(N-1)}\rightarrow \mathbb{R}$ is an eigenfunction for the eigenvalue $\frac{KN}{N-1}$. If $u$ is an eigenfunction of $([a,b], \m_{\sK,\sN})$
for the eigenvalue $\lambda$, then the classical Lichnerowicz estimate tells $\lambda_1\geq \frac{KN}{N-1}$. In the following we often write $v=-c\cdot\sin_{K/(N-1)}$ for some $c=const>0$.
\end{example}
\begin{remark}\label{remark}
By the previous theorem we have $|\nabla u|^2=const-\textstyle{\frac{K}{N-1}}u^2$ $\m_{\sX}$-almost everywhere. It follows that actually $|\nabla u|^2\in D^2(L^{\sX})$ and
\begin{align*}
L^{\sX}|\nabla u|^2=-\textstyle{\frac{K}{N-1}}L^{\sX}u^2=-\textstyle{\frac{2K}{N-1}}uL^{\sX}u-2|\nabla u|^2= \textstyle{\frac{2K^2N}{(N-1)^2}}u^2-2|\nabla u|^2 \in W^{1,2}(X).
\end{align*}
By ultra-contractivity of $P^{\sX}_t$, by the Bakry-Ledoux gradient estimate and since $u$ is an eigenfunction we know that 
$$|\nabla |\nabla u|^2|^2=|\nabla (\textstyle{\frac{K}{N-1}}u^2)|^2=\big(\textstyle{\frac{2K}{N-1}}\big)^2u^2|\nabla u|^2\leq CP_t^{\sX}|\nabla u|^2\in L^{\infty}(\m_{\sX})$$
for some constant $C=C(t)>0$.
This yields
\begin{align*}
|\nabla L^{\sX}|\nabla u|^2|\leq C\left(u|\nabla u|+|\nabla |\nabla u|^2|\right)\in L^{\infty}(\m_{\sX}).
\end{align*}
and it follows that $L^{\sX}|\nabla u|^2$ is Lipschitz continuous by the regularity properties of $RCD$-spaces (see Assumption \ref{TheAss}).
\end{remark}
The next theorem is the main result of \cite{giglimondino} (see also \cite{giglistructure}). It shows that Definition \ref{measuredlap} is compatible with local minimizers of the Cheeger energy.
For what follows we assume that $X$ satisfies a local $2$-Poincar\'e inequality and has a doubling property. These properties are fullfilled if $X$ satisfies a curvature-dimension condition. $\Omega$ is an open subset of $X$.
We say that $u$ is a sub-minimizer of $\ChX$ on $\Omega$ if 
$$\int_{\Omega}|\nabla u|^2d\m_{\sX}\leq \int_{\Omega}|\nabla(u+f)|^2d\m_{\sX}\ \ \mbox{ for all non-positive }f\in \mbox{Test}(X).$$
\begin{theorem}[Theorem 4.3 in \cite{giglimondino}]\label{min} Let $u\in D({\boldsymbol L}^{},\Omega)$. Then the following statements are equivalent: 
\begin{itemize}
 \item[(i)] $u$ is sub-harmonic: $$-\int_{\Omega}\langle\nabla u,\nabla f\rangle d\m_{\sX}\leq 0 \mbox{ for all non-positive }f\in \mbox{Test}(X)$$
 \item[(ii)] $u$ is a sub-minimizer of $\ChX$ on $\Omega$
\end{itemize}
\end{theorem}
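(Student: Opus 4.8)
The plan is to run the standard first-order variational argument, exploiting that under the standing infinitesimal-Hilbertianity assumption $\ChX$ is a quadratic form. Concretely, for any $v\in W^{1,2}_{loc}(X)$ and any Lipschitz $f$ with compact support in $\Omega$, bilinearity of the map $\langle\nabla\cdot,\nabla\cdot\rangle$ gives the exact identity
$$\int_\Omega|\nabla(v+tf)|^2\,d\m_{\sX}-\int_\Omega|\nabla v|^2\,d\m_{\sX}=2t\int_\Omega\langle\nabla v,\nabla f\rangle\,d\m_{\sX}+t^2\int_\Omega|\nabla f|^2\,d\m_{\sX},\qquad t\in\mathbb{R}.$$
All quantities here are honestly finite since the relevant integrands are supported on the compact set $\supp f\Subset\Omega$; this is the correct reading of the displayed inequalities in (i)--(ii) even though $|\nabla u|$ need only belong to $L^2_{loc}$.

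For the implication (ii) $\Rightarrow$ (i) I would fix a non-positive test function $f$. Since $tf\le 0$ for every $t>0$, sub-minimality of $u$ against the competitor $u+tf$ yields $0\le 2t\int_\Omega\langle\nabla u,\nabla f\rangle\,d\m_{\sX}+t^2\int_\Omega|\nabla f|^2\,d\m_{\sX}$; dividing by $t$ and letting $t\downarrow 0$ produces $\int_\Omega\langle\nabla u,\nabla f\rangle\,d\m_{\sX}\ge 0$, which is exactly sub-harmonicity.

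For (i) $\Rightarrow$ (ii) I would take $t=1$ in the identity above for an arbitrary non-positive test function $f$: the cross term $2\int_\Omega\langle\nabla u,\nabla f\rangle\,d\m_{\sX}$ is $\ge 0$ by sub-harmonicity applied to $f\le 0$, and the quadratic term $\int_\Omega|\nabla f|^2\,d\m_{\sX}$ is trivially $\ge 0$; hence $\int_\Omega|\nabla(u+f)|^2\,d\m_{\sX}\ge\int_\Omega|\nabla u|^2\,d\m_{\sX}$, i.e.\ $u$ sub-minimizes $\ChX$ on $\Omega$.

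The hard part is not the variational computation but the set-up. One must justify the quadratic expansion for a merely locally Sobolev $u$ — this is where localization to $\supp f$ and the locality plus product rules for $\langle\nabla\cdot,\nabla\cdot\rangle$ from \cite{giglistructure} enter — and one must check that the class of perturbations admitted in the definition of ``sub-minimizer'' (Lipschitz $f\le 0$ with compact support in $\Omega$) is genuinely in duality with the class of ``test functions'' used to define sub-harmonicity, so that the two one-sided conditions really correspond. Once these compatibility points are settled, the equivalence is precisely the first-order optimality criterion for the convex functional $f\mapsto\ChX(u+f)$ restricted to the cone of non-positive perturbations.
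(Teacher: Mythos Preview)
The paper does not supply a proof of this theorem at all; it merely quotes the statement and attributes it to \cite{giglimondino}. So there is no ``paper's own proof'' to compare against.

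Your argument is correct, and in the infinitesimally Hilbertian setting of this article it is genuinely the right one: once $\langle\nabla\cdot,\nabla\cdot\rangle$ is bilinear the expansion $|\nabla(u+tf)|^2=|\nabla u|^2+2t\langle\nabla u,\nabla f\rangle+t^2|\nabla f|^2$ is exact, and the equivalence of (i) and (ii) is nothing more than the first-order optimality condition for a convex quadratic restricted to a cone. Your remarks on localization to $\supp f\Subset\Omega$ are also to the point. Note that the hypothesis $u\in D(\boldsymbol L,\Omega)$ is not actually used in either direction of your argument; it is there because the cited result in \cite{giglimondino} contains further equivalent formulations involving the sign of the measure $\boldsymbol L^{\Omega}u$.

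For context: the result in \cite{giglimondino} is stated and proved \emph{without} assuming infinitesimal Hilbertianity. There the Cheeger energy is only convex, not quadratic, $\langle\nabla u,\nabla f\rangle$ must be replaced by one-sided directional derivatives $D^{\pm}f(\nabla u)$, and the passage between sub-minimality and the variational inequality requires the convexity/semicontinuity machinery developed in \cite{giglistructure}. Your proof buys simplicity at the cost of generality---which is perfectly acceptable here since the paper works throughout under the $RCD^*$ assumption.
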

A very important consequence of this characterization is the strong maximum principle that was established for sub-minimizers by Bj\"orn/Bj\"orn in \cite{bjoern}.
\begin{theorem}[Strong maximum principle]\label{max}
Let $u$ be a sub-minimizer of $\ChX$ in $\Omega$, and $\Omega$ has compact closure. If $u$ attains its maximum in $\Omega$, then $u$ is constant.
\end{theorem}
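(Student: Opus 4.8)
The statement is the strong maximum principle for sub-minimizers, which at this level of generality is the one of Bj\"orn--Bj\"orn \cite{bjoern}; the plan is to reduce it to a weak Harnack inequality, available here because the doubling and local $2$-Poincar\'e properties are in force. First I would normalize: let $M:=\max_{\Omega}u$, attained at some $x_0\in\Omega$, and — since $X$ is a geodesic space, hence connected, and one may replace $\Omega$ by the connected component containing $x_0$ — assume $\Omega$ connected. Throughout I work with the quasicontinuous representatives; for a sub-minimizer $u$ one may take it upper semicontinuous, equivalently $v:=M-u\geq 0$ has a lower semicontinuous representative. By the characterization in Theorem~\ref{min}, $u$ being a sub-minimizer is the same as $-\int_{\Omega}\langle\nabla u,\nabla f\rangle\,d\m_{\sX}\leq 0$ for all non-positive $f\in\mathrm{Test}(X)$; passing to $v$ this reads $-\int_{\Omega}\langle\nabla v,\nabla g\rangle\,d\m_{\sX}\leq 0$ for all non-negative $g\in\mathrm{Test}(X)$, i.e.\ $v$ is a non-negative super-minimizer (superharmonic function) on $\Omega$.

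The key tool is the weak Harnack inequality for non-negative super-minimizers of $\ChX$, which holds in metric measure spaces carrying a doubling measure and a local $2$-Poincar\'e inequality (Kinnunen--Martio, Kinnunen--Shanmugalingam; see also \cite{bjoern}): there exist $p>0$, $C\geq 1$ and $\tau\in(0,1)$ such that for every ball $B=B(x,r)$ with $\overline{B}\subset\Omega$,
\begin{align*}
\Big(\meanint_{\tau B}v^{p}\,d\m_{\sX}\Big)^{1/p}\leq C\,\esinf_{\tau B}v .
\end{align*}
I will also use the companion fine-regularity fact that the lsc-regularized super-minimizer satisfies $v(x)=\lim_{\rho\downarrow 0}\esinf_{B(x,\rho)}v$ for every $x$; since $\rho\mapsto\esinf_{B(x,\rho)}v$ is non-increasing, $v(x)=0$ together with $v\geq 0$ forces $\esinf_{B(x,\rho)}v=0$ for every $\rho$ with $B(x,\rho)\subset\Omega$.

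Now the argument runs as follows. Set $F:=\{x\in\Omega:\ v(x)=0\}$, with $v$ the lsc representative; then $F$ is closed in $\Omega$ (since $F=\{v\leq 0\}$ and $v$ is lsc), and $x_0\in F$ because $v(x_0)=M-u(x_0)=0$. To see $F$ is open, fix $x\in F$ and $r>0$ with $\overline{B(x,r)}\subset\Omega$; by the previous paragraph $\esinf_{\tau B(x,r)}v=0$, so the weak Harnack inequality gives $\meanint_{\tau B(x,r)}v^{p}\,d\m_{\sX}=0$, hence $v=0$ $\m_{\sX}$-a.e.\ on $\tau B(x,r)$, and passing back to the lsc representative $v\equiv 0$ on the whole open ball $\tau B(x,r)$; thus $\tau B(x,r)\subset F$. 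Being non-empty, open and closed in the connected set $\Omega$, $F=\Omega$, i.e.\ $v\equiv 0$ and $u\equiv M$ on $\Omega$.

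The only genuinely non-trivial ingredient is the weak Harnack inequality for super-minimizers, together with the essential-liminf property of their lsc representatives — precisely the De Giorgi--Nash--Moser / Newtonian-space potential theory developed under doubling $+$ Poincar\'e, both guaranteed here by the curvature-dimension condition. This is the main obstacle in the sense that it is the step one imports rather than reproves; everything else (the reduction $u\mapsto M-u$ via Theorem~\ref{min}, and the clopen/connectedness argument) is routine, so I expect the write-up to be short, the main care being to interpret ``attains its maximum'' through the upper semicontinuous representative.
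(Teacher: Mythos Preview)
The paper does not give its own proof of this theorem: it is stated as a quotation of the strong maximum principle ``established for sub-minimizers by Bj\"orn/Bj\"orn in \cite{bjoern}'' and used as a black box. Your proposal correctly sketches the standard argument behind that citation --- reduce to a non-negative super-minimizer $v=M-u$, invoke the weak Harnack inequality available under doubling plus a local $2$-Poincar\'e inequality, and run the clopen/connectedness argument --- so you are supplying strictly more than the paper does, and along the same lines as the cited reference.
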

\noindent 
The maximum principle is the main ingridient in the proof of the following theorem.
\begin{theorem}\label{wichtigeaussage}
Let $(X,\de_{\sX},\m_{\sX})$ satisfy $RCD^*(K,N)$ for $N>1$.
Consider $u\in D(L^{\sX})$ as in Theorem \ref{crucial}.
Then
\begin{align*}
\langle\nabla u,\nabla u\rangle\leq (v'\circ v^{-1})^2(u).
\end{align*} 
where $v:I_{\sK/(\sN-1)}\rightarrow \mathbb{R}$ is an eigenfunction for the eigenvalue $\lambda_1=\textstyle{\frac{KN}{N-1}}$ with Neumann boundary conditions of the $1$-dimensional model space 
$(I_{\sK/(\sN-1)},\m_{\sK,\sN})$
such that $[\min u,\max u]\subset [\min v,\max v]$.
%In particular
%$
%|\nabla (v^{-1}\circ u)|^2\leq 1.
%$
\end{theorem}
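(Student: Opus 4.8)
The goal is a pointwise gradient comparison of the form $|\nabla u|^2(x)\le (v'\circ v^{-1})^2(u(x))$, and the natural strategy is the classical one of Li--Yau / Kröger type adapted to the metric measure setting: introduce the function
\begin{align*}
\phi:= |\nabla u|^2 - (v'\circ v^{-1})^2\circ u,
\end{align*}
and show $\phi\le 0$ everywhere by a maximum-principle argument. First I would record, using Theorem \ref{crucial}, that $|\nabla u|^2 = c - \frac{K}{N-1}u^2$ for a constant $c>0$, and that the model eigenfunction $v=-c_0\sin_{K/(N-1)}$ on $I_{K/(N-1)}$ satisfies exactly the same first integral, $(v')^2 = c_0^2\,\frac{K}{N-1} - \frac{K}{N-1}v^2$ on its range. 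So on the level of the $1$-dimensional ODE, $(v'\circ v^{-1})^2(s) = \frac{K}{N-1}(c_0^2 - s^2)$ is an explicit quadratic in $s$. The point of phrasing the theorem through $v$ is that it is the sharp profile; the inequality $[\min u,\max u]\subset[\min v,\max v]$ lets us choose $c_0$ so that $c_0^2\ge \frac{N-1}{K}c$ if $c$ were too large, and the claim becomes: the constant $c$ in the first integral for $u$ is bounded by the model constant determined by the actual range of $u$. Concretely, if we normalize so that $v^{-1}(u)$ makes sense, the inequality $|\nabla u|^2\le (v'\circ v^{-1})^2(u)$ is equivalent to $c - \frac{K}{N-1}u^2 \le \frac{K}{N-1}(c_0^2 - u^2)$, i.e. $c\le \frac{K}{N-1}c_0^2$, a single scalar inequality comparing the energy constant of $u$ with the range of $u$.

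So the real content is: \emph{the constant} $c=|\nabla u|^2 + \frac{K}{N-1}u^2$ is controlled by $\max u^2$ (equivalently $\min u^2$, by symmetry $u\mapsto -u$). Here is where the maximum principle enters. From Remark \ref{remark} we know $|\nabla u|^2\in D^2(L^{\sX})$ with $L^{\sX}|\nabla u|^2 = \frac{2K^2N}{(N-1)^2}u^2 - 2|\nabla u|^2$, and that this is Lipschitz. Substituting the first integral $|\nabla u|^2 = c-\frac{K}{N-1}u^2$, one gets $L^{\sX}(u^2) = \frac{2K}{N-1}\big(c - \frac{N}{N-1}u^2 ... \big)$ — more precisely $L^{\sX}u^2 = 2uL^{\sX}u + 2|\nabla u|^2 = -\frac{2KN}{N-1}u^2 + 2c - \frac{2K}{N-1}u^2$. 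Now consider the function $w:=u^2$; it satisfies a linear equation $L^{\sX}w = -\alpha w + \beta$ with $\alpha = \frac{2KN}{N-1}+\frac{2K}{N-1} = \frac{2K(N+1)}{N-1}$ ... but wait, this gives $w$ an eigenvalue-type equation, not directly a sub/super-harmonicity. The cleaner route is the one the paper is clearly setting up with Theorems \ref{min} and \ref{max}: form the composite function $G\circ u$ for a suitable $G$ (the natural candidate being $G(s) = $ arcsine-type antiderivative, or rather $G = v^{-1}$ composed appropriately), so that $G\circ u$ becomes sub- or super-harmonic on the open set where $|\nabla u|>0$, apply the strong maximum principle of Theorem \ref{max} on relatively compact subsets, and conclude that either $\phi\le 0$ or $G\circ u$ is locally constant (forcing $u$ constant, contradicting that $u$ is a nonconstant eigenfunction). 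The chain rule for the measure-valued Laplacian on $\mathbb{D}^{\sX}_\infty$ (available since $u\in\mathbb{D}^{\sX}_\infty$ and Lemma \ref{sav} gives $\Phi(u)\in\mathbb{D}^{\sX}_\infty$ for smooth $\Phi$), together with the Hessian identity $H[u](f,g) = -\frac{K}{N-1}u\langle\nabla f,\nabla g\rangle$, is what makes the computation of $L^{\sX}(G\circ u)$ go through cleanly and produces the sign needed.

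\textbf{Key steps in order.} (1) Reduce the pointwise inequality to the scalar statement $c\le \frac{K}{N-1}c_0^2$ using the explicit first integrals for $u$ and $v$ and the hypothesis $[\min u,\max u]\subset[\min v,\max v]$. (2) On the open set $U=\{|\nabla u|>0\}$ define $\Phi(u)$ for an appropriate smooth $\Phi$ (chosen so that $\Phi' = 1/v'\circ v^{-1}$ or a regularized version thereof) and compute, using the chain rule in $\mathbb{D}^{\sX}_\infty$, Remark \ref{remark}, and Theorem \ref{crucial}, that $L^{\sX}(\Phi(u))$ has a definite sign — making $\Phi(u)$ sub-harmonic in the sense of Theorem \ref{min}(i). (3) Since $X$ is compact (assumed in Section 3), apply the strong maximum principle Theorem \ref{max}: $\Phi(u)$ attains its max, hence is constant, hence $u$ is constant on each component of $U$; but then $|\nabla u|\equiv 0$ there, so $U=\emptyset$ or the bound is trivially saturated, and either way the desired inequality holds on all of $X$ by continuity of both sides. (4) Handle the degenerate behaviour of $v'\circ v^{-1}$ at the endpoints of $v$'s range (where $v'=0$) by a regularization/limiting argument, replacing $v$ by a model eigenfunction on a slightly larger interval $[a,b]\supsetneq[\min u,\max u]$ and letting $[a,b]\to[\min v,\max v]$.

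\textbf{Main obstacle.} The delicate point is step (2)--(3): making rigorous sense of ``$\Phi(u)$ is sub-harmonic'' when $\Phi'$ blows up near the endpoints of the range, and controlling what happens on the closed set $\{|\nabla u|=0\}$ where the composition $v^{-1}\circ u$ and its derivatives degenerate. One expects $u$ to attain its extrema exactly on $\{|\nabla u|=0\}$ (by the first integral), and near those points the model function $v$ also has vanishing derivative, so the comparison is a $0/0$ situation that must be resolved either by a careful regularization of $\Phi$ or by working on exhausting open subsets $\{|\nabla u|>\varepsilon\}$, verifying the maximum principle there, and passing to the limit — exactly the kind of argument where the Lipschitz regularity of $L^{\sX}|\nabla u|^2$ established in Remark \ref{remark} and the algebra/chain-rule properties from Lemma \ref{sav} are indispensable.
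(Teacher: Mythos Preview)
Your reduction in step (1) is correct and clarifying: given the first integral $|\nabla u|^2 + \tfrac{K}{N-1}u^2 = c$ from Theorem~\ref{crucial} and the model identity $(v')^2 + \tfrac{K}{N-1}v^2 = \tfrac{K}{N-1}c_0^2$, the gradient comparison collapses to the single scalar inequality $c \le \tfrac{K}{N-1}c_0^2$, and since $c_0 \ge \max|u|$ it suffices to show that the continuous representative of $|\nabla u|^2$ vanishes at the extrema of $u$. This is a cleaner framing than the paper's, which follows the Kr\"oger--Bakry--Qian template without invoking the first integral to simplify.

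The gap is in steps (2)--(3): the proposed $\Phi$ with $\Phi' = 1/(v'\circ v^{-1})$, i.e.\ $\Phi = v^{-1}$, does \emph{not} make $\Phi(u)$ sub- or superharmonic. Using the first integral and $v'' = -\tfrac{K}{N-1}v$ one computes
\[
L^{\sX}\Phi(u) \;=\; \frac{K}{N-1}\cdot\frac{u}{(v'\circ v^{-1}(u))^3}\Big[c-\tfrac{KN}{N-1}c_0^2+Ku^2\Big],
\]
and the bracket (as well as the prefactor) changes sign, so neither the strong maximum principle nor your constancy argument applies. What is actually needed is an auxiliary function whose Laplacian dominates a positive multiple of itself. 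The paper supplies exactly this: it works with $F=\psi(u)\big(|\nabla u|^2-(v'\circ v^{-1})^2(u)\big)$ and chooses $\psi=\cosh\circ v^{-1}$, engineered so that $L^{\sX}F \ge g(\psi)F^2+h(\psi)F$ with $g,h>0$; then the pointwise bound $L^{\sX}F(p)\le 0$ at a maximum (proved from Theorems~\ref{min} and~\ref{max} via Lipschitz continuity of $L^{\sX}F$) forces $F\le 0$. Note that by your first integral $F=\psi(u)\cdot(c-\tfrac{K}{N-1}c_0^2)$, so this is precisely your scalar reduction carried out with the correct auxiliary function. An equivalent direct completion of your route: apply the same pointwise Laplacian bound to $f_\alpha=e^{\alpha u}\in\mathbb{D}^{\sX}_\infty$; at $M=\max u$ one obtains $\alpha\big(c-\tfrac{K}{N-1}M^2\big)\le\tfrac{KN}{N-1}M$ for every $\alpha>0$, hence $c\le\tfrac{K}{N-1}M^2$.
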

\begin{proof}
We follow ideas of Kr\"oger \cite{kroeger} and Bakry/Qian \cite{bakryqian}.
If we consider $\Psi\in C^{\infty}(\mathbb{R}^2)$ with bounded first and second derivatives, and $u_1,u_2\in \mathbb{D}_{\infty}^{\sX}$, then we have $\Psi(u_1,u_2)\in D(L^{\sX})$ and we can compute $L^{\sX}\Psi(u_1,u_2)$ explicetly. 
More precisely, we have
\begin{align*}
L^{\sX}\Psi(u_1,u_2)=\sum_{i=1}^{2}\Psi_i(u_1,u_2)L^{\sX}u_i+\sum_{i,j=1}^{2}\Psi_{i,j}(u_1,u_2)\langle \nabla u_i,\nabla u_j\rangle.
\end{align*}
One can actually check that $\Psi(u,v)\in \mathbb{D}^{\sX}_{\infty}$. In particular, $\Psi(u,v)\in \mathbb{D}^{\sX}_{\infty}$ 
is Lipschitz continuous. 
\smallskip\\
We set $u_1=u$, $u_2=|\nabla u|^2$ and $\Psi(u_1,u_2)=\psi(u_1)(u_2-\phi(u_1))$ for non-negative auxiliary functions $\psi,\phi\in C^{\infty}([\min u,\max u])$ with bounded first and second derivatives. 
We obtain
\begin{align}\label{plug}
L^{\sX}\Psi(u,|\nabla u|^2)=&\left[\psi'(u)|\nabla u|^2-\psi'(u)\phi(u)-\psi(u)\phi(u)\right]L^{\sX}u \nonumber\medskip\nonumber\\
&+\psi(u)L^{\sX}|\nabla u|^2 +2\psi'(u)\langle \nabla u,\nabla |\nabla u|^2 \rangle \medskip\nonumber\\
&+\left[\psi''(u)|\nabla u|^2-\psi''(u)\phi(u)-2\psi'(u)\phi'(u)-\psi(u)\phi''(u)\right]|\nabla u|^2.
\end{align}
We apply the result of Theorem \ref{crucial}:
\begin{align}
\langle\nabla u,\nabla |\nabla u|^2\rangle&= -2\lambda_1\frac{u}{N}|\nabla u|^2.\label{22}
\end{align}
%We obtain from (\ref{plug})
%\begin{align}\label{plug2}
%L^{\sX}\Psi(u,|\nabla u|^2)=&\left[\psi'(u)|\nabla u|^2-\psi'(u)\phi(u)-\psi(u)\phi(u)\right]L^{\sX}u \nonumber\medskip\nonumber\\
%&+\psi(u)L^{\sX}|\nabla u|^2 -2\psi'(u)2\lambda_1\frac{u}{N}|\nabla u|^2\medskip\nonumber\\
%&+\left[\psi''(u)|\nabla u|^2-\psi''(u)\phi(u)-2\psi'(u)\phi'(u)-\psi(u)\phi''(u)\right]|\nabla u|^2.
%\end{align}
Since $u$ is an eigenfunction, we can see from Remark \ref{remark} that $L^{\sX}\Psi(u,|\nabla u|^2)\in \mathbb{D}_{\infty}^{\sX}$. In particular, $L^{\sX}\Psi(u,|\nabla u|^2)$ is Lipschitz-continuous.
%
%The second identity  In our case, the situation simplifies significantly. 
\smallskip
\smallskip\\
\textit{Claim:} If $\Psi(u,|\nabla u|^2)$ attains its maximum in $p\in X$, then $0\geq L^{\sX}\Psi(u,|\nabla u|^2)(p)$.
\smallskip\\
\textit{Proof of the claim:} We set $\Psi(u,|\nabla u|^2)=f$. The claim follows from the maximum principle (see Theorem \ref{max} above).
First, the Laplacian $L^{\sX}f$ coincides with the notion of measure-valued Laplacian from Definition \ref{measuredlap}. 
Therefore, $L^{\sX}f$ can be localized to an open subset $\Omega$.
%That is, if we consider an open subset $\Omega\subset X$, we
%can define the Cheeger enery $\Ch^{\Omega}$ with respect to $\Omega$ and the corresponding self-adjoint operator. It is easy see that 
%$f\in D^2(L^{\sX})$ implies $f|_{\Omega}\in D^2(L^{\Omega})$ with $L^{\Omega}f|_{\Omega}=(L^{\sX}f)|_{\Omega}$. 
Assume $L^{\sX}f(p)>0$. Then by Lipschitz continuouity of $L^{\sX}f$ there exists an open ball $B_{\delta}(p)$ such that $(L^{\sX}f)|_{B_{\delta}(p)}\geq c>0$.
By the localization property we can say that $f|_{B_{\delta}(p)}$ is sub-harmonic, that is 
\begin{align*}
0\geq \int_{B_{\delta}(p)}L^{\sX}f g d\m_{\sX}=-\int_{B_{\delta}(p)}\langle \nabla f,\nabla g\rangle d\m_{\sX}
\end{align*}$\mbox{for any non-positive }g\in \mbox{Test}(X).$
By Theorem \ref{min} $f|_{B_{\delta}(p)}$ is a sub-minimizer of the Cheeger energy in $B_{\delta}(p)$. Finally, for these minimizers the strong maximum principle holds (Theorem \ref{max}). 
Hence, $f|_{B_{\delta}(p)}$ is constant. But this contradicts $(L^{\sX}f)|_{B_{\delta}(p)}>0$. \qed
\smallskip
\\
In the next step we consider
\begin{align}\label{11}
L|\nabla u|^2\ \geq \ \ & 2K|\nabla u|^2 +\frac{2}{N}\left(L^{\sX}u\right)^2 + 2\langle \nabla u,\nabla L^{\sX}u\rangle\nonumber\\
\ =\ \ &2K |\nabla u|^2+\frac{2}{N}\lambda_1^2u^2-2\lambda_1|\nabla u|^2
\end{align}
and plug (\ref{11}) also into (\ref{plug}).
We set $F(u)=\Psi(u,|\nabla u|^2)=\psi(u)(|\nabla u|^2-\phi(u))$, $|\nabla u|^2=F(u)/\psi(u)+\phi(u)$,
and $\phi=(v'\circ (v)^{-1})>0$.
%where $v$ is the eigenfunction for the eigenvalue $\lambda_1$ of the 1-dimensional model space on some interval $[a,b]$ with Neumann boundary conditions. 
We choose $v$ such that $[\min u,\max u]\subset (\min v,\max v)$. Therefore, we have that $\phi\in C^{\infty}([\min u,\max u])$. 
A straightforward computation yields
\begin{align}\label{kiki}
L^{\sX}F(u)\geq &F^2\left(\frac{\psi''}{\psi^2}\right)+F\left(\frac{\psi''}{\psi}\phi - \frac{\psi'}{\psi}\left(4\lambda u \textstyle{\frac{1}{N}}+2\phi'+\lambda u\right)-\phi''-2\lambda +2K\right)\nonumber\\
&+\psi\left(\phi'\lambda u+2 K\phi - 2\lambda \phi +\textstyle{\frac{2}{N}\lambda^2}u^2-\phi''\phi\right)-\psi'\left(4\lambda u\frac{1}{N}\phi+2\phi\phi'\right).
\end{align}
where $F=F(u)$, $\psi=\psi(u)$ and $\phi=\phi(u)$ etc. In particular, the computation holds for $u=v$ and any admissible $\psi$. In this case we have $F=0$ and equality in (\ref{kiki}).
Consequently, the last line of the previous equation is $0$.
Additionally, for any $x\in X$ we can choose $\psi$
such that $\psi(u(x))=\epsilon$ for some arbitrarily small $\epsilon>0$ and $\psi'(u(x))=c>0$. Therefore, it follows that $2\lambda x\frac{1}{N}=\phi'(x)$ for any $x\in [\min u,\max u]$. Hence, we obtain
\begin{align}\label{prev}
L^{\sX}F(u)\geq &\underbrace{\frac{1}{\psi(u)}\left[\frac{\psi''(u)}{\psi(u)}\right]}_{=:g(\psi)}F^2(u)+\underbrace{\left[\frac{\psi''(u)}{\psi(u)}\phi(u)-\frac{\psi'(u)}{\psi(u)}{\lambda}u\right]}_{=:h(\psi)}F(u).
\end{align}
If there is a positive $\psi\in C^{\infty}(\mathbb{R})$ such that $g,h>0$ on $[\min u,\max u]$, we can conclude that $F\leq 0$. Otherwise $F(p)>0$ and (\ref{prev}) implies that $L^{\sX}F(u)(p)>0$. But
this contradicts the previous claim.
\medskip
\\
Consider $H(t)=\psi(v(t))$ and compute its second derivative
\begin{align*}
H''(t)=\psi''(v(t))(v'(t))^2+\psi'(v(t))v''(t)=\psi''(v(t))\phi(v(t))-\psi'(v(t))\lambda_1v(t).
\end{align*}
Therefore, if we choose $H=\cosh$, we see that $\psi(u)=\cosh(v^{-1}(u))>0$ solves $h(\psi)=1>0$. We also compute that 
\begin{align*}
\psi''(u)=\frac{1}{(v'\circ v^{-1})^2}\left[\cosh(v^{-1}(u))+\sinh(v^{-1}(u))\frac{u\circ v^{-1}}{v'\circ v^{-1}}\right]
\end{align*}
on $[\min u,\max u]$. Hence, $g(\psi)>0$ on $[\min u,\max u]$.
Finally, we obtain the statement for general $v$ if we replace $v$ by $cv$ for $c>1$. Then let $c\rightarrow 1$.
\end{proof}
\begin{corollary}
The statement of Theorem \ref{AAA} holds provided $\min u=-\max u$.
\end{corollary}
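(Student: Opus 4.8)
The plan is to prove the diameter lower bound and combine it with the upper bound that is already available. For $N>1$ the generalized Bonnet--Myers Theorem \ref{bonnet} gives $\diam_{\sX}\leq\pi\sqrt{\frac{N-1}{K}}$, so it suffices to show $\diam_{\sX}\geq\pi\sqrt{\frac{N-1}{K}}$; for $N=1$ the bound $\diam_{\sX}\leq\pi$ is a standing hypothesis and one must produce $\diam_{\sX}\geq\pi$. I describe the argument for $N>1$; the case $N=1$ is verbatim the same after replacing $\frac{K}{N-1}$ by $1$ and the model interval $I_{K/(N-1)}$ by $[-\frac{\pi}{2},\frac{\pi}{2}]$ equipped with Lebesgue measure.

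First I fix the extremal points. Since $u\in\mathbb{D}^{\sX}_{\infty}$ is Lipschitz and $X$ is compact (from Theorem \ref{bonnet}, resp.\ the diameter hypothesis), $u$ attains its extrema; choose $p,q\in X$ with $u(p)=M:=\max u$ and $u(q)=\min u$. The eigenfunction is nonconstant, so by the hypothesis $\min u=-M$ and $M>0$. Next I apply Theorem \ref{wichtigeaussage} to the model eigenfunction $v=-M\sin_{K/(N-1)}$ on $I_{K/(N-1)}$, whose range is exactly $[-M,M]=[\min u,\max u]$ (the borderline equality $[\min u,\max u]=[\min v,\max v]$ is admissible thanks to the $c\to1$ limit at the end of the proof of that theorem). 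Writing $\omega:=\sqrt{K/(N-1)}$, one has $v^{-1}(s)=-\frac1\omega\arcsin(s/M)$, hence a direct computation gives $(v'\circ v^{-1})^2(s)=\omega^2(M^2-s^2)$, and Theorem \ref{wichtigeaussage} yields
\begin{align*}
|\nabla u|^2\leq \omega^2(M^2-u^2)\qquad\ \m_{\sX}\mbox{-a.e.}
\end{align*}

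The key step is a change of variables turning this bound into a $1$-Lipschitz competitor for the distance formula. For $\epsilon\in(0,1)$ set $G_{\epsilon}(s):=\frac1\omega\arcsin\!\big((1-\epsilon)s/M\big)$, which is smooth with bounded derivative on $[-M,M]$, a set containing the range of $u$; by the chain rule $w_{\epsilon}:=G_{\epsilon}(u)\in W^{1,2}(X)$, and the displayed gradient bound gives
\begin{align*}
|\nabla w_{\epsilon}|=|G_{\epsilon}'(u)|\,|\nabla u|\leq (1-\epsilon)\sqrt{\frac{1-u^2/M^2}{1-(1-\epsilon)^2u^2/M^2}}\leq 1-\epsilon\qquad\ \m_{\sX}\mbox{-a.e.},
\end{align*}
the last inequality because $(1-\epsilon)^2\leq1$. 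Hence $w_{\epsilon}$ admits a $1$-Lipschitz version by Assumption \ref{TheAss} (equivalently by Theorem \ref{dist}); being continuous it agrees with $G_{\epsilon}\circ u$ on all of $X$, so
\begin{align*}
\de_{\sX}(p,q)\geq w_{\epsilon}(p)-w_{\epsilon}(q)=\frac2\omega\arcsin(1-\epsilon).
\end{align*}
Letting $\epsilon\to0$ gives $\diam_{\sX}\geq\de_{\sX}(p,q)\geq\frac{\pi}{\omega}=\pi\sqrt{\frac{N-1}{K}}$, and together with Theorem \ref{bonnet} this forces $\diam_{\sX}=\pi\sqrt{\frac{N-1}{K}}$, i.e.\ the conclusion of Theorem \ref{AAA}.

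Because the analytic substance is already in place — the self-improved Hessian identity of Theorem \ref{crucial} and the maximum-principle gradient comparison of Theorem \ref{wichtigeaussage} — I do not expect a real obstacle; what remains is essentially bookkeeping. The two places that genuinely need care are: (i) arranging the range of the model function $v$ to coincide with $[\min u,\max u]$, so that Theorem \ref{wichtigeaussage} delivers the sharp bound $|\nabla u|^2\leq\omega^2(M^2-u^2)$ — this is precisely where the hypothesis $\min u=-\max u$ is used, since for an asymmetric pair of extrema one only obtains $\de_{\sX}(p,q)\geq\frac1\omega\big(\frac{\pi}{2}-\arcsin(\frac{\min u}{\max u})\big)<\frac{\pi}{\omega}$, which is not enough for rigidity; and (ii) the failure of $\arcsin$ to be Lipschitz at $\pm1$, which is why one truncates to $G_{\epsilon}$ and defers the limit $\epsilon\to0$ until after the admissible function $w_{\epsilon}$ has been produced, rather than using $\frac1\omega\arcsin(u/M)$ directly.
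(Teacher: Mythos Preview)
For $N>1$ your argument is correct and is essentially the paper's own: both take the model $v=-M\sin_{K/(N-1)}$ on $I_{K/(N-1)}$ (admissible precisely because $\min u=-\max u$), use Theorem \ref{wichtigeaussage} to obtain $|\nabla(v^{-1}\circ u)|\le1$, and then invoke the distance representation of Theorem \ref{dist} together with Bonnet--Myers (Theorem \ref{bonnet}). Your $\epsilon$-truncation $G_\epsilon$ is a careful touch the paper omits---it writes $|\nabla v^{-1}\circ u|^2\le1$ and applies Theorem \ref{dist} directly, glossing over the blowup of $(v^{-1})'$ at $\pm M$---but the route is the same.

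The gap is the $N=1$ case. Your claim that it is ``verbatim the same'' presupposes a gradient comparison at $N=1$, $K=0$, but Theorem \ref{wichtigeaussage} is stated and proved only for $N>1$, and the Hessian identity of Theorem \ref{crucial} on which it rests likewise requires $K>0$ and $N>1$; neither is available at the endpoint. The paper does not attempt to extend these tools. Instead it treats $N=1$ by a direct structural argument: under $RCD^*(0,1)$ with $\diam_{\sX}\le\pi$ the space has Hausdorff dimension $1$ and is isomorphic to $c\cdot\mathbb{S}^1$ for some $0\le c\le 1$, and the existence of an eigenfunction with eigenvalue $1$ forces $c=1$, hence $\diam_{\sX}=\pi$. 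You should either supply this separate argument or justify why the gradient comparison extends to $N=1$.
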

\begin{proof}Assume $N>1$. 
The previous theorem implies that 
\begin{align*}
|\nabla v^{-1}\circ u|^2\leq 1
\end{align*}
where $v=\min u\cdot\sin_{K/(N-1)}:I_{K/(N-1)}\rightarrow \mathbb{R}$. Then, Theorem \ref{dist} yields $\diam_{\sX}= {\pi}\textstyle{\sqrt{{(N-1)}/{K}}}$. Hence, $X$ attains the maximal diameter and we can
use the maximal diameter theorem from \cite{ketterer2}.

If $N=1$, we can argue as follows. The curvature-dimension condition implies that the Hausdorff-dimension is $1$. Even more, the space consists of two points that have maximal distance and at most $2$ geodesics that 
connect them. Hence, $X$ isomorphic to $c\cdot \mathbb{S}^1$ for some $0\leq c\leq 1$. But the existence of $u$ such that $L^{\sX}u=-u$ forces $c$ to be equal $1$.
\end{proof}
\begin{corollary}\label{auchwichtig}
Let $(X,\de_{\sX},\m_{\sX})$ satisfy $RCD^*(K,N)$ for $N>1$.
Consider $u\in D(L^{\sX})$ as in Theorem \ref{crucial}.
Then
\begin{align*}
\langle\nabla u,\nabla u\rangle\leq (w'\circ w^{-1})^2(u).
\end{align*} 
where $w:[a,b]\rightarrow \mathbb{R}$ is an eigenfunction for the eigenvalue $\lambda_1=\scriptstyle{{KN}/{(N-1)}}$ of the generalized Laplace operator $L^{\sK',\sN'}$ with Neumann boundary conditions for some $1$-dimensional model space 
$([a,b],\m_{\sK',\sN'})$ with $K'\leq K$ and $N'\geq N$
such that $[\min u,\max u]\subset [\min w,\max w]$.
%In particular
%$
%|\nabla (v^{-1}\circ u)|^2\leq 1.
%$
\end{corollary}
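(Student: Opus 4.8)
The plan is to deduce Corollary~\ref{auchwichtig} from Theorem~\ref{wichtigeaussage} by a purely one-dimensional comparison argument, treating the conclusion of Theorem~\ref{wichtigeaussage} as a black box. First I would recall from Theorem~\ref{wichtigeaussage} that $\langle\nabla u,\nabla u\rangle\leq (v'\circ v^{-1})^2(u)$ $\m_{\sX}$-a.e., where $v:I_{\sK/(\sN-1)}\to\mathbb{R}$ is an eigenfunction on the model space $(I_{\sK/(\sN-1)},\m_{\sK,\sN})$ for the eigenvalue $\lambda_1=\frac{KN}{N-1}$ with $[\min u,\max u]\subset[\min v,\max v]$. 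So it suffices to prove the pointwise inequality $(v'\circ v^{-1})^2(s)\leq (w'\circ w^{-1})^2(s)$ for all $s\in[\min u,\max u]$, where $w:[a,b]\to\mathbb{R}$ is the corresponding eigenfunction on a model space $([a,b],\m_{\sK',\sN'})$ with $K'\leq K$, $N'\geq N$ and $[\min u,\max u]\subset[\min w,\max w]$. After composing with the (increasing) inverse this is equivalent to a comparison of the quantity $(w')^2$ along $w$ versus $(v')^2$ along $v$, parametrized by the common range variable $s$.

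The key observation is that for any one-dimensional model eigenfunction $h$ satisfying $L^{\sK'',\sN''}h = -\lambda h$ with Neumann boundary conditions, the results of Theorem~\ref{crucial} applied in the one-dimensional setting (or a direct computation from the ODE $h'' - N''\frac{\sin_{K''/(N''-1)}}{\cos_{K''/(N''-1)}}h' = -\lambda h$) give a first integral: $(h')^2 + \frac{K''}{N''-1}h^2$ is constant along $h$. Hence, after normalizing so that the eigenfunctions have comparable amplitude (rescale $v$, $w$ by positive constants so that $\min u = -\max u$ sits symmetrically, or more generally so that both attain the value $s$), the function $(w'\circ w^{-1})^2(s)$ has the explicit form $c_w^2 - \frac{K'}{N'-1}s^2$ and similarly $(v'\circ v^{-1})^2(s) = c_v^2 - \frac{K}{N-1}s^2$ for constants $c_v,c_w>0$ determined by the requirement that these vanish at the endpoints $\min w=-\max w$ and $\min v=-\max v$ respectively. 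Since $\frac{K'}{N'-1}\leq\frac{K}{N-1}$ (because $K'\leq K$ and $N'\geq N$), and since $[\min v,\max v]\subset[\min w,\max w]$ can be arranged (the model with smaller $\frac{K'}{N'-1}$ has a longer interval $I_{K'/(N'-1)}$, hence the eigenfunction ranges over a wider interval of values for the same amplitude), one gets $c_w^2 - \frac{K'}{N'-1}s^2 \geq c_v^2 - \frac{K}{N-1}s^2$ termwise on the relevant range, which is exactly the desired inequality. I would also remark that $v$ is itself a special case $K'=K$, $N'=N$, so Theorem~\ref{wichtigeaussage} really is the extremal instance of this corollary, and the corollary just records the flexibility of allowing worse parameters.

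The main obstacle I expect is bookkeeping around normalization and the containment of value-ranges: one has to be careful that choosing $K'\leq K$ and $N'\geq N$ with a suitably chosen interval $[a,b]$ really does produce an eigenfunction $w$ whose image contains $[\min u,\max u]$ and whose associated "speed profile" $(w'\circ w^{-1})^2$ dominates that of $v$ at every point of this common range, rather than merely at the endpoints. The clean way to handle this is to reduce everything, via the substitution $s = w(t)$, to comparing solutions of two Riccati-type or Sturm-type ODEs with ordered coefficients: the function $\varphi_w(s) := (w'\circ w^{-1})^2(s)$ solves $\frac{1}{2}\varphi_w' \circ w^{-1}\cdot\ldots$; more simply, one differentiates the first-integral identity and invokes Sturm comparison for the linear second-order ODE, with the ordered coefficient $\frac{K'}{N'-1}\leq\frac{K}{N-1}$ forcing the model-$w$ solution to stay above the model-$v$ solution once they are matched at a common zero. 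I would present this as a short lemma on one-dimensional model spaces and then the corollary follows by combining it with Theorem~\ref{wichtigeaussage}. The only genuinely delicate point is ensuring the hypothesis $[\min u,\max u]\subset[\min w,\max w]$ is compatible with the normalization making the comparison valid, which is where one uses that enlarging the interval $[a,b]$ (allowed since $I_{K'/(N'-1)}\supset I_{K/(N-1)}$ when $\frac{K'}{N'-1}<\frac{K}{N-1}$) only increases the range of $w$ and the value of $\varphi_w$.
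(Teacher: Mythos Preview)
Your reduction to the one-dimensional inequality $(v'\circ v^{-1})^2(s)\leq (w'\circ w^{-1})^2(s)$ is correct, and this is also how the paper proceeds. But your proof of that inequality has a genuine error: the ``first integral'' $(h')^2+\frac{K''}{N''-1}h^2=\mathrm{const}$ does \emph{not} hold for a general eigenfunction $h$ of $L^{\sK'',\sN''}$. Theorem~\ref{crucial} only yields this identity when the eigenvalue equals the Lichnerowicz bound of the ambient space, i.e.\ when $\lambda=\frac{K''N''}{N''-1}$. For $v$ on the $(K,N)$ model this is satisfied, since $\lambda_1=\frac{KN}{N-1}$; but $w$ is an eigenfunction for the \emph{same} eigenvalue $\lambda_1=\frac{KN}{N-1}$ on the $(K',N')$ model, and once $K'<K$ or $N'>N$ one has $\lambda_1>\frac{K'N'}{N'-1}$, so neither Theorem~\ref{crucial} nor the direct ODE manipulation (multiply by $2w'$ and integrate: the drift term does not vanish) gives a first integral. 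Consequently the explicit formula $(w'\circ w^{-1})^2(s)=c_w^2-\frac{K'}{N'-1}s^2$ is false, and the termwise comparison collapses.

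The paper sidesteps this by not attempting a closed-form expression for $(w'\circ w^{-1})^2$. Instead it observes that the one-dimensional model $(I_{\sK/(\sN-1)},\m_{\sK,\sN})$ itself satisfies $RCD^*(K',N')$, and then invokes the general gradient comparison of Bakry--Qian (Theorem~8 in \cite{bakryqian}), which applies to eigenfunctions whose eigenvalue is merely $\geq$ the Lichnerowicz bound of the comparison model, to conclude $(v')^2\leq (w'\circ w^{-1})^2(v)$. Your parenthetical suggestion to ``invoke Sturm comparison for the linear second-order ODE with ordered coefficients'' is essentially what lies behind that Bakry--Qian result, so if you want a self-contained argument you would have to actually carry out that Riccati/Sturm comparison for the pair of ODEs satisfied by $\varphi_v$ and $\varphi_w$, rather than rely on the first integral.
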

\begin{proof} We observe that $(I_{\sK,\sN},\m_{\sK,\sN})$ satisfies $RCD^*(K',N')$. 
Bakry/Qian pove in \cite{bakryqian} (Theorem 8) that 
$$
(v')^2\leq(w'\circ w^{-1})^2(v)
$$
where $v$ is an eigenfunction of $(I_{\sK,\sN},\m_{\sK,\sN})$ for the eigenvalue $\lambda_1=\textstyle{\frac{KN}{N-1}}$ such that $[\min v,\max v]=[\min w,\max w]$.
\end{proof}
\section{Proof of the main theorem}
As we already mentioned the statement of the main theorem would already be true if we had $\min u=-\max u$. In the last step we establish this identity. Again, we will apply ideas Bakry/Emery \cite{bakryqian},
and the situation simplifies significantly since $\lambda_1=\textstyle{\frac{KN}{N-1}}$.
First, we have the following Theorem.
\begin{theorem}
Let $X$ and $u$ be as in Theorem \ref{AAA}. Consider the model space $([a,b],\m_{\sK',\sN'})$ for some interval $[a,b]$ and a eigenfunction $w$ for the eigenvalue $\lambda_1={\textstyle\frac{K N}{N-1}}$ where $K'<K$ and $N'> N$.
If $[\min u,\max u]\subset [\min w,\max w]$,
then 
\begin{align*}
R(c)=\left({\int_{\left\{u\leq c\right\}}ud\m_{\sX}}\right)\left({\int_{\left\{v\leq c\right\}}vd\m_{\sK',\sN'}}\right)^{-1}
\end{align*}
is non-decreasing on $[\min u,0]$ and non-increasing on $[0,\max u]$.
\end{theorem}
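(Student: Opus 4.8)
The plan is to express both the numerator $A$ and the denominator $B$ of $R$ by means of first-order ODEs coming from the eigenfunction equation and the coarea formula, and then to read off the sign of $(\log R)'$. Write $v:=w$, so that $R=A/B$ with $A(c):=\int_{\{u\le c\}}u\,d\m_{\sX}$ and $B(c):=\int_{\{v\le c\}}v\,d\m_{\sK',\sN'}$.

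The first step is to record the consequences of Theorem~\ref{crucial}. There $u\in\mathbb{D}^{\sX}_{\infty}$ and $|\nabla u|^2+\tfrac{K}{N-1}u^2$ is constant $\m_{\sX}$-a.e.; since $|\nabla u|^2$ is continuous, $|\nabla u|$ agrees $\m_{\sX}$-a.e.\ with a continuous function $g$ of $u$, and since $\int_X u\,d\m_{\sX}=0$ forces $\min u<0<\max u$, one checks that $g>0$ on the open interval $(\min u,\max u)$. On the one-dimensional model the eigenfunction $v$ is strictly monotone with $v'$ vanishing only at the Neumann endpoints, so $|\nabla v|$ is the function $g_0(t):=|v'\circ v^{-1}(t)|$ of the level value, positive on $(\min v,\max v)$. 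Corollary~\ref{auchwichtig} gives the comparison
\begin{align}\label{gradcompR}
g(t)^2\le g_0(t)^2,\qquad t\in[\min u,\max u],
\end{align}
and integrating the two eigenfunction equations against the constant $1$ (using the Neumann condition on the model) yields $\int_X u\,d\m_{\sX}=0=\int_{[a,b]}v\,d\m_{\sK',\sN'}$.

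The second and central step is to derive the ODEs. For $c\in(\min u,\max u)$ take the Lipschitz cutoff $\eta_\varepsilon$ equal to $1$ on $(-\infty,c-\varepsilon]$ and to $0$ on $[c,\infty)$; then $\eta_\varepsilon(u)\in W^{1,2}(X)$, and integration by parts and the chain rule give
\begin{align*}
\int_X\eta_\varepsilon(u)\,L^{\sX}u\,d\m_{\sX}=-\int_X\eta_\varepsilon'(u)\,|\nabla u|^2\,d\m_{\sX}=-\int_{\mathbb{R}}\eta_\varepsilon'(t)\,g(t)^2\,d\mu(t),
\end{align*}
where $\mu:=u_{\#}\m_{\sX}$. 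Using the coarea formula on the metric measure space ($|Du|=|\nabla u|\m_{\sX}$ disintegrates along the level sets) together with the fact that $|\nabla u|=g(u)$ is essentially constant on each level set, one shows that $\mu$ is absolutely continuous on $(\min u,\max u)$ with $\mu'(t)=\mathrm{Per}(\{u\le t\})/g(t)$; inserting this and letting $\varepsilon\downarrow0$ gives, for a.e.\ $c$,
\begin{align*}
-\lambda_1 A(c)=\int_{\{u\le c\}}L^{\sX}u\,d\m_{\sX}=g(c)^2\,\mu'(c).
\end{align*}
Since $A(c)=\int_{\min u}^{c}t\,d\mu(t)$ gives $A'(c)=c\,\mu'(c)$ a.e., this becomes $A'(c)=-\lambda_1 c\,g(c)^{-2}A(c)$. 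The same computation on the model --- elementary there, since the level sets of $v$ are single points --- yields $B'(c)=-\lambda_1 c\,g_0(c)^{-2}B(c)$ a.e.

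The final step is the sign analysis. A short computation from $\int_X u\,d\m_{\sX}=0$ (splitting at $u=0$) shows $\int_{\{u>c\}}u\,d\m_{\sX}>0$, hence $A(c)<0$, for $c\in(\min u,\max u)$; likewise $B(c)<0$ for $c\in(\min v,\max v)$, in particular for $c\in(\min u,\max u)$. Thus $R>0$, $\log R$ is locally absolutely continuous on $(\min u,\max u)$, and
\begin{align*}
(\log R)'(c)=\frac{A'(c)}{A(c)}-\frac{B'(c)}{B(c)}=\lambda_1 c\,\frac{g(c)^2-g_0(c)^2}{g(c)^2g_0(c)^2}\qquad\text{a.e.}
\end{align*}
By \eqref{gradcompR} this fraction is $\le0$, so $(\log R)'\ge0$ on $(\min u,0)$ and $(\log R)'\le0$ on $(0,\max u)$; hence $R$ is non-decreasing on $(\min u,0]$ and non-increasing on $[0,\max u)$, and its monotone one-sided limits supply the values at $\min u$ and $\max u$. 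I expect the main obstacle to be the coarea step identifying $\int_{\{u\le c\}}L^{\sX}u\,d\m_{\sX}$ with $g(c)^2\mu'(c)$: this is exactly where the equality $|\nabla u|=g(u)$ from Theorem~\ref{crucial} enters, and making it rigorous requires the BV / finite-perimeter calculus on $RCD$ spaces --- that $\m_{\sX}$-a.e.\ sublevel set has finite perimeter, that $|Du|=|\nabla u|\m_{\sX}$, and the coarea disintegration --- to justify both the absolute continuity of $\mu$ and the limit $\varepsilon\downarrow0$; the remaining computations are routine one-variable calculus.
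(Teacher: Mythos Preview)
Your argument is correct and follows the same overall strategy as Bakry--Qian (to which the paper simply defers): derive a first-order relation for $A(c)=\int_{\{u\le c\}}u\,d\m_{\sX}$ and the model analogue $B(c)$, then read off the sign of $(\log R)'$ from the gradient comparison.

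The one genuine difference is your use of Theorem~\ref{crucial}. Because you invoke the exact identity $|\nabla u|^2+\tfrac{K}{N-1}u^2=const$, you obtain the \emph{equality} $-\lambda_1 A(c)=g(c)^2\mu'(c)$ and hence an exact ODE $A'=-\lambda_1\,c\,g^{-2}A$. Bakry--Qian, working without any sharp Hessian identity, use only the gradient inequality $|\nabla u|\le g_0(u)$ (your Corollary~\ref{auchwichtig}): on the level set $\{u=c\}$ one has $|\nabla u|\le g_0(c)^2/|\nabla u|$, hence
\[
-\lambda_1 A(c)=\int_{\{u=c\}}|\nabla u|\,dP_c\le g_0(c)^2\int_{\{u=c\}}\frac{1}{|\nabla u|}\,dP_c=g_0(c)^2\,\mu'(c),
\]
and this differential \emph{inequality} already yields $A'/A\ge B'/B$ on $(\min u,0)$ and the reverse on $(0,\max u)$. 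That is why the paper lists only invariance of $\m_{\sX}$ (giving $\int u\,d\m_{\sX}=0$) and Corollary~\ref{auchwichtig} as ingredients. Your route is shorter and more explicit in the present sharp-eigenvalue setting; the Bakry--Qian route is the more general one and does not need Theorem~\ref{crucial}. The coarea/BV step you flag as the main technical obstacle is required in both arguments and is exactly the point where the $RCD$ finite-perimeter calculus enters.
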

\begin{proof}
For the proof we can follow precisely the proof of the corresponding result in \cite{bakryqian}. We only need invariance of $\m_{\sX}$ with respect to $P_t^{\sX}$ and Corollary \ref{auchwichtig}.
\end{proof}
\begin{corollary}\label{coro}
Let $u$ and $w$ be as in the previous theorem. Then there exists a constant $c>0$ such that
\begin{align*}
\m_{\sX}(B_r(p))\leq cr^{N'}
\end{align*}
for sufficiently small $r>0$ where $p\in X$ such that $u(p)=\min u$.
\end{corollary}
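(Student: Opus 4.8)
The plan is to transport the volume bound from the $1$-dimensional model to $X$ by means of the monotone function $R$ of the preceding theorem, using that the gradient comparison makes $w^{-1}\circ u$ a $1$-Lipschitz function that pins $p$ to a pole of the model.

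First I would fix a convenient model. The preceding theorem is available for every admissible $[a,b]$, every $K'<K$, $N'>N$ whose Neumann eigenvalue on $([a,b],\m_{\sK',\sN'})$ equals $\frac{KN}{N-1}$, and every normalization of the eigenfunction $w$; following Bakry--Qian \cite{bakryqian} I would choose it so that the left endpoint $a$ is a pole of $I_{K'/(N'-1)}$ (where $\cos_{K'/(N'-1)}$ vanishes), so that $w$ attains its minimum at $a$ --- this fixes $b\in(a,\tfrac{\pi}{2}\sqrt{(N'-1)/K'})$, which exists since $\frac{KN}{N-1}>\frac{K'N'}{N'-1}$ --- and so that, after rescaling, $\min w=\min u$ while still $[\min u,\max u]\subset[\min w,\max w]$. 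Since $u$ is an eigenfunction for the positive eigenvalue $\frac{KN}{N-1}$, testing $L^{\sX}u=-\frac{KN}{N-1}u$ against the constant function gives $\int_X u\,d\m_{\sX}=0$, hence $\min u<0<\max u$ and these normalizations are meaningful. Corollary \ref{auchwichtig} then gives $\langle\nabla u,\nabla u\rangle\le(w'\circ w^{-1})^2(u)$, so $\Phi:=w^{-1}\circ u\in W^{1,2}_{loc}(X)$ has $|\nabla\Phi|\le1$ $\m_{\sX}$-a.e.; by Assumption \ref{TheAss} it admits a $1$-Lipschitz representative, again denoted $\Phi$, and $\Phi(p)=w^{-1}(\min u)=a$.

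Next I would reduce balls around $p$ to sublevel sets of $u$ and feed them into $R$. As $w$ is increasing on $[a,b]$ with $w(a)=\min w$, and $\Phi\ge a$ is $1$-Lipschitz with $\Phi(p)=a$, for small $r>0$ every $x\in B_r(p)$ has $a\le\Phi(x)<a+r$, whence $u(x)=w(\Phi(x))<w(a+r)=:c_r$; thus $B_r(p)\subset\{u\le c_r\}$ with $c_r\downarrow\min u<0$ as $r\downarrow0$. For $r$ small enough $c_r\le0$, so the preceding theorem gives $R(c_r)\le R(0)$, with $R(0)\in(0,\infty)$ because the two integrals defining it are finite and strictly negative. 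Rewriting its definition with signs, $\int_{\{u\le c_r\}}(-u)\,d\m_{\sX}=R(c_r)\int_{\{w\le c_r\}}(-w)\,d\m_{\sK',\sN'}$. On $\{u\le c_r\}$ one has $|c_r|\le -u\le|\min u|$, and on $\{w\le c_r\}$ one has $-w\le|\min w|=|\min u|$; using $|c_r|\ge\tfrac12|\min u|$ for small $r$ this yields
\[
\m_{\sX}(B_r(p))\le\m_{\sX}(\{u\le c_r\})\le\frac{R(0)\,|\min u|}{|c_r|}\,\m_{\sK',\sN'}(\{w\le c_r\})\le 2R(0)\,\m_{\sK',\sN'}(\{w\le c_r\}).
\]
Since $w$ is increasing with $w(a+r)=c_r$, the set $\{w\le c_r\}$ is the interval $[a,a+r]$; and $\cos^{N'-1}_{K'/(N'-1)}$ vanishes to order $N'-1$ at the pole $a$, i.e.\ $\cos^{N'-1}_{K'/(N'-1)}(a+s)\le C_1 s^{N'-1}$ for small $s\ge0$, so $\m_{\sK',\sN'}(\{w\le c_r\})=\int_0^r\cos^{N'-1}_{K'/(N'-1)}(a+s)\,ds\le\frac{C_1}{N'}r^{N'}$. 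Combining, $\m_{\sX}(B_r(p))\le c\,r^{N'}$ for all sufficiently small $r$, with $c=\frac{2C_1R(0)}{N'}$.

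The main obstacle is the first step: producing a single model that simultaneously has Neumann eigenvalue exactly $\frac{KN}{N-1}$, has its eigenfunction minimized at a pole (which is precisely where the exponent $N'$ in the conclusion comes from), and, once rescaled so that $\min w=\min u$, still satisfies $\max u\le\max w$ so that $w^{-1}\circ u$ is defined on all of $X$; this compatibility is exactly what the one-dimensional comparison model of Bakry and Qian \cite{bakryqian} supplies. Everything after that is the monotonicity of $R$ from the preceding theorem together with the elementary estimates above.
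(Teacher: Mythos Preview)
Your argument is correct and follows essentially the same route as the paper: contain $B_r(p)$ in a sublevel set of $u$ via the gradient comparison (you do this through the $1$-Lipschitz function $w^{-1}\circ u$, the paper via the equivalent estimate $u(x)\le u(p)+kr^2$), invoke the monotonicity of $R$ from the preceding theorem to pass to the model, and then read off the $r^{N'}$ growth from $\m_{\sK',\sN'}$ near the pole. Your identification $\{w\le c_r\}=[a,a+r]$ is in fact slightly more direct than the paper's inclusion $\{v\le kr^2\}\subset B_{Mr}(a)$, but the content is the same; and your ``main obstacle'' (that one may take $a$ to be a pole with $\min w=\min u$ while still having $\max u\le\max w$) is exactly the normalization assumed when the corollary is invoked in the subsequent theorem, so no additional input beyond Bakry--Qian is needed.
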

\begin{proof} We know that $\min u\leq 0$. We will prove that $B_{r}(p)\subset \left\{u\leq u(p)+kr^2\right\}$ for some constant $k>0$ and $r>0$ sufficiently small. For instance, this follows exactly like in proof of Theorem 3.2 in \cite{wangxia}. 
More precisely, consider $x\in B_{r}(p)$ for $r>0$ small, and $v$ that is an eigenfunction for $\lambda_1$ of the model space such that $v(-\pi{\scriptstyle \sqrt{\frac{K}{N-1}}})=u(p)$. 
The gradient estimate implies $\lip u(x) =|\nabla u|(x)\leq \tilde{C} r^2$ for some constant $\tilde{C}>0$. By definition of the local slope we obtain
\begin{align*}
u(x)\leq u(p)+ \lip u(x)r\leq u(p)+kr^2
\end{align*}
for some constant $k>0$. 
Let $kr^2\in[\min u,-\frac{1}{2}]$.
Therefore
\begin{align*}
\m_{\sX}(B_{r}(p))\leq \int_{\left\{u\leq kr^2\right\}}ud\m_{\sX}\leq C \int_{\left\{v\leq kr^2\right\}}d\m_{\sK,\sN}%\leq \underbrace{\tilde{C}\m_{\sK,\sN}(B_{K\sqrt{c}}(-\pi\sqrt{\frac{K}{N-1}})}_{\leq cr^{N'}}
\end{align*}
where $C=\int_{\left\{u\leq 0\right\}}ud\m_{\sX}/\int_{\left\{v\leq 0\right\}}d\m_{\sK,\sN}$. For the same reason there is a constant $M>0$ such that $\left\{v\leq r^2\right\}\subset B_{Mr}(-{\textstyle\frac{\pi}{2}\sqrt{K/(N-1)}})$ in $I_{\sK/(\sN-1)}$. 
It follows that
\begin{align*}
\m_{\sX}(B_{kr}(p))\leq\m_{\sK,\sN}(B_{Kr}(-{\textstyle\frac{\pi}{2}}\sqrt{K/(N-1)}))\leq \tilde{c}r^{\sN}
\end{align*}
that implies the assertion for some constant $c>0$.
\end{proof}
\begin{theorem}
Consider $X$ and $u$ as in Theorem \ref{AAA}. Consider $v_{\sK',\sN'}$ that solves the following ordinary differential equation
\begin{align}\label{ode}
v''-\frac{K'\sin_{\sK'/(N'-1)}}{\cos_{\sK'/(\sN'-1)}}v'+\lambda_1 v=0, \ \ v(-\pi{\scriptstyle \sqrt{\frac{K'}{N'-1}}})=\min u \ \ \& \ \ v'(-\pi{\scriptstyle \sqrt{\frac{K'}{N'-1}}})=0.
\end{align}
Let $b(K',N'):=\inf\left\{x>-\pi\sqrt{\frac{K}{N-1}}: w'(x)=0\right\}$. Then $\max u \geq v_{\sK,\sN}(b(K,N))$.
\end{theorem}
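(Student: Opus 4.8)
The plan is to argue by contradiction, the point being that Corollary \ref{coro} and the Bishop--Gromov volume comparison become incompatible once $N'>N$. (We are in the regime $N>1$, $K>0$ of Theorem \ref{AAA}; the case $N=1$ was disposed of earlier.) First I record that, since $L^{\sX}u=-\tfrac{KN}{N-1}u$ with $\tfrac{KN}{N-1}>0$, testing the equation against the constant function gives $\int_X u\,d\m_{\sX}=0$, so $\min u<0<\max u$ and the initial data of the differential equation (\ref{ode}) is meaningful. The solution $v_{\sK',\sN'}$ of (\ref{ode}) starts at the left endpoint of $I_{K'/(N'-1)}$ with vanishing derivative, is strictly increasing on $[\,\text{left endpoint},\,b(K',N')\,]$, and again has vanishing derivative at $b(K',N')$; hence its restriction to that interval is an admissible one-dimensional model eigenfunction for the eigenvalue $\lambda_1=\tfrac{KN}{N-1}$ of $\big([\,\text{left endpoint},\,b(K',N')\,],\m_{\sK',\sN'}\big)$, with minimum $\min u$ and maximum $v_{\sK',\sN'}(b(K',N'))$.

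Now fix $K'<K$ and $N'>N$ and suppose, for contradiction, that $\max u<v_{\sK',\sN'}(b(K',N'))$. Then $[\min u,\max u]\subset[\min w,\max w]$ for $w:=v_{\sK',\sN'}|_{[\,\text{left endpoint},\,b(K',N')\,]}$, so the hypotheses of the preceding theorem — and therefore of Corollary \ref{coro} — are met, and the latter furnishes $c>0$ and $r_0>0$ with $\m_{\sX}(B_r(p))\le c\,r^{N'}$ for all $0<r<r_0$, where $u(p)=\min u$. On the other hand $(X,\de_{\sX},\m_{\sX})$ is compact by Theorem \ref{bonnet}, has $\m_{\sX}(X)>0$ and satisfies $CD^*(K,N)$, so the Bishop--Gromov inequality yields $c_1>0$ with $\m_{\sX}(B_r(p))\ge c_1 r^{N}$ for all small $r$ (the leading term of $\int_0^r\sin_{K/(N-1)}(t)^{N-1}\,dt$ being of order $r^{N}$). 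Since $N'>N$ these two bounds force $c_1\le c\,r^{N'-N}$ for all small $r$, which is absurd. Hence $\max u\ge v_{\sK',\sN'}(b(K',N'))$ for every $K'<K$ and $N'>N$.

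It remains to pass to the limit $K'\uparrow K$, $N'\downarrow N$: the coefficient of (\ref{ode}) and its initial data then converge to those of the eigenvalue equation for $\lambda_1$ on $(I_{K/(N-1)},\m_{\sK,\sN})$, whose regular solution normalized by $v(\text{left endpoint})=\min u$ is $v_{\sK,\sN}=-\min u\cdot\sin_{K/(N-1)}$, with first subsequent critical point $b(K,N)=\tfrac{\pi}{2}\sqrt{\tfrac{N-1}{K}}$ and $v_{\sK,\sN}(b(K,N))=-\min u$; by continuous dependence of ODE solutions and of their first critical point on the parameters, $v_{\sK',\sN'}(b(K',N'))\to v_{\sK,\sN}(b(K,N))$, whence $\max u\ge v_{\sK,\sN}(b(K,N))$, as claimed. (Combined with the reverse bound $\max u\le-\min u$ — which follows from Theorem \ref{crucial}, because the constant in $|\nabla u|^2+\tfrac{K}{N-1}u^2\equiv\mathrm{const}$ must equal $\tfrac{K}{N-1}(\min u)^2$, as is seen by letting $x\to p$, where the gradient comparison of Theorem \ref{wichtigeaussage} forces $|\nabla u|(x)\to0$ — this gives $\max u=-\min u$, which is precisely the hypothesis needed in the corollary to Theorem \ref{wichtigeaussage} to conclude Theorem \ref{AAA}.) The routine but slightly delicate points are the verification that the contradiction hypothesis genuinely places us in the scope of the preceding theorem and of Corollary \ref{coro} (in particular that $w$ is a bona fide model eigenfunction with $\min w=\min u$) and the continuity argument in the limit; the conceptual heart, however, is simply that $\max u<v_{\sK',\sN'}(b(K',N'))$ would force an $N'$-dimensional ($N'>N$) volume decay of $\m_{\sX}$ around a minimizer of $u$, which no $RCD^*(K,N)$-space can exhibit.
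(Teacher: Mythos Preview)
Your argument is correct and follows the same route as the paper: derive the impossible volume decay $\m_{\sX}(B_r(p))\le c\,r^{N'}$ with $N'>N$ from Corollary \ref{coro}, and contradict it with Bishop--Gromov. The only structural difference is where the continuous-dependence step sits: the paper assumes $\max u<v_{\sK,\sN}(b(K,N))$ and perturbs to nearby $(K',N')$ to land in the scope of Corollary \ref{coro}, while you prove $\max u\ge v_{\sK',\sN'}(b(K',N'))$ for every $K'<K$, $N'>N$ and then let $(K',N')\to(K,N)$; these are logically equivalent uses of the same ODE stability fact.
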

\begin{proof}
Assume the contrary. If $\max u < v(b)$. Then we consider a solution $w$ of (\ref{ode}) with parameters $K'<K$ and $N'>N$. By definition $w$ is an eigenfunction of $([-\pi{\scriptstyle \sqrt{\frac{K}{N-1}}},b],\m_{\sK',N'})$ (with Neumann boundary conditions). 
Since the solution of (\ref{ode}) depends continuously on the coefficients, $w$ still satisfies $\max u<w(b)$ provided $(K',N')$ is close enough to $(K,N)$.
Then, we can apply Corollary \ref{coro}.
%, and we obtain 
%\begin{align*}
%\m_{\sX}(B_r(p))\leq cr^{N'}.
%\end{align*}
%for $r>0$ sufficiently small.
But on the other hand, by the Bishop-Gromov volume growth estimate
\begin{align*}
0<C\leq\frac{\m_{\sX}(B_r(p))}{r^{N}}<\frac{\m_{\sX}(B_r(p))}{r^{N'}}\rightarrow \infty \ \ \mbox{for} \ \ r\rightarrow 0.
\end{align*}
Thus we have a contradiction.
\end{proof}
\begin{proofmain}
Consider $X$ and $u$ as in Theorem \ref{AAA}. We have to check that $-\min u=\max u$. Assume $-\min u\geq \max u$. Otherwise replace $u$ by $-u$. The previous corollary tells us that 
$\max u\geq v_{\sK,\sN}(b(K,N))$ where $v_{\sK,\sN}$ is a solution of (\ref{ode}) for $K$ and $N$. 
In this case $b(K,N)=\pi{\scriptstyle \sqrt{\frac{K'}{N'-1}}}$ and $v_{\sK,\sN}=-\min u\cdot\sin_{\sK/(\sN-1)}$.
Therefore $\max u=-\min u$.\qed
\end{proofmain}

\begin{corollary}\label{bf}
Let $(X,\de_{\sX},\m_{\sX})$ be a metric measure space that satisfies the condition $RCD^*(K,N)$ for $K>0$ and $N\in (1,\infty)$.
Assume there is $u\in D^2(L^{\sX})$ such that $L^{\sX}u=-\frac{K N}{N-1}u$. Then $u$ is $K/(N-1)u$-affine. More precisely, $u\circ \gamma$ solves
\begin{eqnarray*}
&v''+ \frac{K}{N-1}|\dot{\gamma}|^2 v=0 \ \mbox{ on } \ [0,1]\\
&v(0)=u(\gamma(0))\ \& \ v(1)=u(\gamma(1))&
\end{eqnarray*}
for any geodesic $\gamma:[0,1]\rightarrow X$.
\end{corollary}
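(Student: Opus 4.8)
The plan is to obtain Corollary~\ref{bf} by chaining two facts already established: the algebraic Hessian identity for $u$ from Theorem~\ref{crucial}, and the equivalence between Hessian bounds and metric semi-convexity from Theorem~\ref{convexity}.

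First I would collect the input. By Theorem~\ref{crucial} the eigenfunction $u$ lies in $\mathbb{D}^{\sX}_{\infty}$ — in particular it is bounded and Lipschitz with $|\nabla u|^{2}\in W^{1,2}(X)\cap L^{\infty}(\m_{\sX})$ — and it satisfies $H[u](f,g)=-\tfrac{K}{N-1}\,u\,\langle\nabla f,\nabla g\rangle$ $\m_{\sX}$-a.e.\ for all $f,g\in\mathbb{D}^{\sX}_{\infty}$. Equivalently, the generalized Hessian of $u$ equals $-\tfrac{K}{N-1}u$ times the metric tensor, so simultaneously $H[u](f,f)\ge -\tfrac{K}{N-1}u\,|\nabla f|^{2}$ and $H[u](f,f)\le -\tfrac{K}{N-1}u\,|\nabla f|^{2}$ hold $\m_{\sX}$-a.e.; since $\mathbb{D}^{\sX}_{\infty}$ is a vector space, applying the same identity to $-u$ gives $H[-u](f,f)\ge -\tfrac{K}{N-1}(-u)\,|\nabla f|^{2}$.

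Next I would feed this into Theorem~\ref{convexity}. As $u,-u\in\mathbb{D}^{\sX}_{\infty}$ are regular enough for that theorem to apply, the lower Hessian bound $H[u]\ge -\tfrac{K}{N-1}u\,\langle\nabla\cdot,\nabla\cdot\rangle$ says exactly that $u$ is $\tfrac{K}{N-1}u$-convex, while the corresponding bound for $-u$ says $-u$ is $\tfrac{K}{N-1}(-u)$-convex, i.e.\ $u$ is $\tfrac{K}{N-1}u$-concave. Hence $u$ is $\tfrac{K}{N-1}u$-affine: for any geodesic $\gamma\colon[0,1]\to X$, writing $\theta=|\dot\gamma|=\mathrm{L}(\gamma)$ and $v=u\circ\gamma$, the function $v$ satisfies $v''+\tfrac{K}{N-1}\theta^{2}v\le 0$ and $v''+\tfrac{K}{N-1}\theta^{2}v\ge 0$ distributionally on $[0,1]$, hence $v''+\tfrac{K}{N-1}\theta^{2}v=0$ distributionally. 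Since $v$ is continuous (indeed Lipschitz, $u$ being Lipschitz and $\gamma$ of finite speed) and the ODE has constant coefficients, a one-dimensional bootstrap promotes $v$ to a classical $C^{\infty}$ solution, and $v(0)=u(\gamma(0))$, $v(1)=u(\gamma(1))$ hold by the very definition of $v$. This is the assertion.

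The point requiring care — and the only genuine obstacle — is that the Hessian bound $-\tfrac{K}{N-1}u$ is a continuous function of the point, whereas Theorem~\ref{convexity} may be formulated for constant bounds. I would bridge this by localisation. Fixing $\gamma$ and $\varepsilon>0$, uniform continuity of $u\circ\gamma$ on $[0,1]$ yields a finite partition of $[0,1]$ on each piece of which $u\circ\gamma$ ranges inside some interval $(a,a+\varepsilon)$; on the open set $\{a<u<a+\varepsilon\}$ one has $-\tfrac{K}{N-1}(a+\varepsilon)\,|\nabla f|^{2}\le H[u](f,f)\le -\tfrac{K}{N-1}a\,|\nabla f|^{2}$, so the local form of Theorem~\ref{convexity} gives $-\tfrac{K}{N-1}(a+\varepsilon)\theta^{2}\le v''\le -\tfrac{K}{N-1}a\theta^{2}$ on the corresponding subarc; since $-\tfrac{K}{N-1}\theta^{2}v$ lies in the same interval there, $|v''+\tfrac{K}{N-1}\theta^{2}v|\le \tfrac{K}{N-1}\varepsilon\theta^{2}$ on that subarc. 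As the subarcs cover $[0,1]$ and $\varepsilon$ is arbitrary, $v''+\tfrac{K}{N-1}\theta^{2}v=0$ in the distributional sense on all of $[0,1]$. Everything else — the Hessian identity of Theorem~\ref{crucial}, the membership $u\in\mathbb{D}^{\sX}_{\infty}$, and the final bootstrap to a classical solution — is already available.
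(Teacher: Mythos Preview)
Your approach is conceptually appealing but has a genuine gap at the localisation step. Theorem~\ref{convexity} is stated and proved only for a \emph{constant} lower bound, and its proof is inherently global: one drift-transforms the measure to $e^{-V}\m_{\sX}$, establishes $BE(\kappa+K,\infty)$ and hence $RCD(\kappa+K,\infty)$ for the \emph{entire} weighted space, and then runs a rescaling/EVI gradient-flow argument on the full Wasserstein space to extract $K$-convexity of $V$. None of these steps restricts to an open set such as $\{a<u<a+\varepsilon\}$, so the sentence ``the local form of Theorem~\ref{convexity} gives \dots'' is precisely the missing lemma rather than a consequence of what is available. To make your route go through you would need a variable-bound version, $H[V]\ge K(x)\Rightarrow V$ is $K(x)$-convex, which amounts to a variable-curvature $BE$/$RCD$ theory for the drifted space --- a genuine additional input not supplied by the paper. (A minor point you also skipped: Theorem~\ref{convexity} requires $\delta\le V\le\delta^{-1}$, which $u$ violates since it changes sign; this is harmless after adding a large constant, but it should be said.)

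For comparison, the paper does not use Theorem~\ref{convexity} here at all. It first applies Theorem~\ref{AAA} together with the maximal diameter theorem to obtain the suspension structure $X=[0,\pi]\times_{\sin}^{N-1}Y$, and then uses the gradient comparison (so that $\arccos u$, suitably normalised, is $1$-Lipschitz) to show that the level sets of $u$ coincide with the metric spheres about the poles; hence $u=u_1\otimes 1$ with $u_1=c\cdot\cos$. Once $u$ is identified as the cosine of the radial coordinate on a spherical suspension, the ODE $v''+\tfrac{K}{N-1}\theta^{2}v=0$ along an arbitrary geodesic follows from the explicit geometry of geodesics in such warped products. This structural route sidesteps the function-valued Hessian bound entirely.
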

\begin{proof}
Assume $\max u=-\min u=1$ and $K=N-1$.
The maximal diameter theorem implies $X\times_{\sin_{\sK/(\sN-1)}}^{\sN}Y$ for some metric measure space $Y$. We show that $u(r,x)=u_1(r)\otimes 1$ for some measurable function $u_1:[0,\pi\sqrt{\frac{K}{N-1}}]\rightarrow \mathbb{R}$. 
Then 
\begin{align*}
-Nu_1\otimes 1=L^{\sX}u_1\otimes 1=L^{[0,\pi],\sin}u_1\otimes 1=\left(\frac{d^2}{dr^2}u_1-N\frac{\cos}{\sin}\frac{d}{dr}u_1\right)\otimes 1.
\end{align*}
Therefore, $u_1$ is an eigenfunction of $L^{[0,\pi],\sin}$ and from Theorem \ref{crucial} follows $u_1$ satisfies the statement. But because of the suspension structure of $X$ and since $u=u_1\otimes 1$ it holds for $u$ as well.
\smallskip\\
Let $x$ and $y$ be the maximum and minimum point of $u$ respectively, and let us consider a level set $\left\{x\in X: u(x)=L\right\}=\mathcal{L}$ of $u$. 
Since $X$ is a spherical suspension, for any $z\in \mathcal{L}$ there is exactly one geodesic $\gamma$ that connects $x$ and $y$ sucht that $z=\gamma(t)$ for some $t\in (0,1)$. The gradient comparison result for $u$ again yields
\begin{align}\label{geraeusch}
\arccos u(x)-L\leq \de_{\sX}(x,z)\ \ \& \ \ L-\arccos u(y)\leq \de_{\sX}(z,y)
\end{align}
and 
\begin{align}
\pi= \arccos\circ u(x)-\arccos\circ u(y)\leq \de_{\sX}(x,\gamma(t))+\de_{\sX}(\gamma(t),y)=\pi. 
\end{align}
Hence, we have equality in (\ref{geraeusch}), and since $\mathcal{L}=\partial B_{\pi-L}(x)=B_{\pi+L}(y)$ because of the suspension structure, $u$ doesn't depend on the second variable
\end{proof}

\begin{corollary}\label{brbrbrbr}
Let $(X,\de_{\sX},\m_{\sX})$ be a metric measure space that satisfies the condition $RCD^*(K,N)$ for $K\geq 0$ and $N\geq 1$. If $N>1$, we assume $K>0$, and if $N=1$ we assume $K=0$ and $\diam_{\sX}\leq \pi$.
There is $u\in D^2(L^{\sX})$ such that
\begin{itemize}
 \item[(i)] $L^{\sX}u=-\textstyle{\frac{K N}{N-1}}u \ \mbox{ if }\ N>1,$
 \smallskip
 \item[(ii)] $L^{\sX}u=-Nu \ \mbox{ otherwise }.$
\end{itemize}
Then, $X\!=\![0,\pi]\times_{\sin_{K/(N-1)}}^{N-1}X'$ for a metric measure space $X'$, and $u=c\cdot\cos_{{\scriptscriptstyle {K}/{N-1}}}$.
\end{corollary}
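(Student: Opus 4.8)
The plan is to compile the corollary from three results already in hand: the eigenvalue rigidity of Theorem~\ref{AAA}, the maximal diameter theorem of \cite{ketterer2}, and the structure of the eigenfunction established in Corollary~\ref{bf}.

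First I would feed the hypotheses directly into Theorem~\ref{AAA}: the assumptions on $(X,\de_{\sX},\m_{\sX})$ and the existence of $u\in D^2(L^{\sX})$ with $L^{\sX}u=-\frac{KN}{N-1}u$ (resp.\ $L^{\sX}u=-Nu$ when $N=1$) are precisely its hypotheses, so $X$ attains the maximal diameter: $\diam_{\sX}=\pi\sqrt{(N-1)/K}$ if $N>1$, and $\diam_{\sX}=\pi$ if $N=1$. Next I would invoke the maximal diameter theorem from \cite{ketterer2} (recalled above in Theorem~\ref{brbrbr}): an $RCD^*(K,N)$-space attaining the maximal diameter is isomorphic to a spherical suspension $[0,\pi]\times_{\sin_{K/(N-1)}}^{N-1}X'$, where $X'$ is an $RCD^*$-space if $N\geq 2$, a single point if $N\in(1,2)$, and a point or a pair of antipodal points at distance $\pi$ if $N=1$. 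This already yields the decomposition of $X$; after a harmless rescaling we may normalize $K/(N-1)=1$, i.e.\ assume $X=[0,\pi]\times_{\sin}^{N}Y$.

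It remains to identify $u$. Here I would apply Corollary~\ref{bf}: $u$ is $\frac{K}{N-1}u$-affine, and, exploiting the suspension structure (each point lies on a unique geodesic between the two poles, and the level sets of $u$ are the metric spheres about a pole), $u$ does not depend on the $Y$-factor, so $u=u_1\otimes 1$. Substituting this into $L^{\sX}u=-\frac{KN}{N-1}u$ and using the explicit form of the generalized Laplacian on the model factor shows that $u_1$ is a Neumann eigenfunction of the one-dimensional model $(I_{K/(N-1)},\m_{\sK,\sN})$ for the eigenvalue $\frac{KN}{N-1}$; Theorem~\ref{crucial} gives $|\nabla u_1|^2+\frac{K}{N-1}u_1^2=\mathrm{const}$, and together with $\min u=-\max u$ (proved in the course of the main theorem) this pins down $u_1$, up to a multiplicative constant, as $\cos_{K/(N-1)}$. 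Hence $u=c\cdot\cos_{K/(N-1)}$. In the degenerate case $N=1$ the one-dimensionality of $X$ together with the argument of the corollary following Theorem~\ref{wichtigeaussage} identifies $X$ with $[0,\pi]$ or with the circle of length $2\pi$ (the degenerate suspensions over a one- or two-point $X'$), and on each pole-to-pole geodesic $u$ solves the one-dimensional eigenvalue equation, giving $u=c\cdot\cos$ once more.

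I do not anticipate a genuine obstacle: this is essentially a bookkeeping corollary. The only points needing care are the interface between the conclusion of the maximal diameter theorem and the coordinates in which Corollary~\ref{bf} is phrased --- matching the curvature normalizations and checking that $v^{-1}\circ u$ identifies $X$ with the model interval exactly, which is where $\min u=-\max u$ enters --- and confirming that $\cos_{K/(N-1)}$ is the only admissible normalization (up to scaling) of the model eigenfunction.
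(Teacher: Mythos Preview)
Your proposal is correct and follows essentially the same route the paper implies: the corollary is stated immediately after Corollary~\ref{bf} without its own proof, and your plan reconstructs exactly that chain --- Theorem~\ref{AAA} for maximal diameter, the maximal diameter theorem of \cite{ketterer2} for the suspension splitting, and the argument of Corollary~\ref{bf} for the independence of $u$ from the fiber and the identification of $u_1$ as a one-dimensional model eigenfunction. The only cosmetic slip is the exponent in your normalized suspension (it should be $N-1$, not $N$); otherwise this is precisely the intended bookkeeping.
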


\section{Higher eigenvalue rigidity}\label{higher}
\begin{proofhigher}
\textbf{1.} First, let $k\leq N$.
We introduce some notations. We call the warped product $\mathbb{S}_+^k(1)\times ^{N-k}_{f}Z$ $k$-multi-suspension, 
$r\times Z$ fiber at $r\in \mathbb{S}_+^k(1)\backslash \partial \mathbb{S}_+^k(1)$ and $\mathbb{S}_+^k(1)\times p$ $k$-base at $p$.
$\sin\circ\de_{\partial \mathbb{S}_+^k(1)}(r)=:f(r)$ is the sin of the distance of $r$ from the boundary. 
$f$ can be constructed inductively for any $k$ as follows. One knows $\mathbb{S}_+^k(1)=\mathbb{S}_+^{k-1}(1)\times_{g}^1[0,\pi]$ where 
$g=\sin\circ\de_{\partial \mathbb{S}_{\scriptscriptstyle{+}}^{k-1}(1)}(r)$. Then $f=\sin\circ g\otimes \sin$.
\smallskip\\
We assume without restriction $N>1$. Because of the Lichnerowicz estimate for metric measure spaces satisfying $RCD^*(N-1,N)$, $\lambda_{k}=N$ implies $\lambda_1,\dots, \lambda_{k-1}=N$. In particular, there is a set of linearly independent eigenfunctions $u_1,\dots, u_k\in D(L^{\sX})$.
\smallskip
\\
\textbf{2.} By Obata's theorem $\lambda_1=N$ implies that there exists $x_1,y_1$ in $X$ such that $\de_{\sX}(x_1,y_1)=\pi$. Therefore, $X$ splits a spherical $(N-1)$-suspension $X=[0,\pi]\times^{N-1}_{\sin}X'$ 
for some metric measure space $X'$ that satisfies $RCD^*(N-2,N-1)$. By Corollary \ref{brbrbrbr} we know that the corresponding eigenfunction $u$ does not depend on $X'$, it is essentially an eigenfunction of 
$([0,\pi],\sin^{\sN-1})$ with eigenvalue $N$ and $u=c\cdot\cos$. 
%We know that $u_1\circ \gamma(t)=u(t)$ satisfies $u''+\pi^2u=0$ for any geodesic $\gamma$ with $|\dot{\gamma}|=L(\gamma)=\pi$ (Corollary \ref{bf}), and $x_1$ and $y_1$ are the minimum and maximum point of $u_1$ respectively. 
%Additionally, $|\nabla u_1|(x_1)=|\nabla u_1|(y_1)=0$. 
%Hence, $u_1$ equals $c(p)\cdot \cos$ on $[0,\pi]\times p$ for any $p\in X$ and some constant $c(p)$, and it is constant on $\pi/2\times X'$. 
%It follows $u_1=c\cos$ for some constant $c>0$.
\smallskip
\\
\textbf{3.} From $\lambda_2=N$ follows the same for points $x_2,y_2$ and $(x_2,y_2)\neq (x_1,y_1)$.  
The latter holds since otherwise $u_2=\cos\otimes c'$ for some constant $c'>0$ on $X=[0,\pi]\times^{N-1}_{\sin}X'$ by the previous step what contradicts linear independency of $u_1$ and $u_2$.
Since $X$ has two different suspension structures, there is a geodesic circle in $X$ that intersects $X'$ twice at points $x',y'\in X'$ such that $\de_{X}(x,y)=\de_{\sX}(x',y')=\pi$ ($X'$ embeds into $X$). Therefore, $X'$ splits, too, 
and we obtain that
$$X=[0,\pi]\times^{N-1}_{\sin}X'=[0,\pi]\times^{N-1}_{\sin}\left([0,\pi]\times^{N-2}_{\sin}X''\right).$$
Since the warped procut construction is associative (see the proof of Corollary 3.19 in \cite{ketterer}), we get
$$X=\mathbb{S}_+^2(1)\times^{N-2}_{\sin\circ \de_{\partial \mathbb{S}_{\scriptscriptstyle{+}}^{2}(1)}}X''$$ where $\mathbb{S}^2_+(1)$ is the $2$-dimensional upper hemisphere of the standard sphere with radius $1$.
\smallskip
\\
\textbf{4.} We continue by induction. $\lambda_3=N$. Again, there are points $(x_3,y_3)\in X$ such that $\de_{\sX}(x_3,y_3)=\pi$ and there is decomposition of $X$ with respect 
to these points $[0,\pi]\times ^{N-1}_{\sin}Y$ such that $u_3=\cos\otimes r$ for some constant $r>0$.
Consider the decomposition $$X=\mathbb{S}_+^2(1)\times^{N-2}_{\sin\circ\de_{\partial \mathbb{S}_+^2(1)}}X''.$$ Then, a geodesic $\gamma\sim[0,\pi]\times p$ ($p\in Y$) that connects
$x_3$ and $y_3$ is not contained in some $2$-base $\mathbb{S}_+^2(1)\times{q}$ for $q\in X''$. Otherwise, the spherical splitting with respect to $x_3,y_3$ would not affect $X''$ and 
$u_3$ would not depend on the $X''$-variable. More precisely, $u_3$ would be an eigenfunction on $$(\mathbb{S}_+^2(1),\sin^{\sN-2}\circ\de_{\partial\mathbb{S}_+^2(1)})$$ with eigenvalue $N$. 
However, the eigenspace for $\lambda_1=N$ of $(\mathbb{S}_+^2(1),\sin^{\sN-2}\circ\de_{\partial\mathbb{S}_+^{2}(1)})$ is
just $2$-dimensional. It follows $u_2=a\cdot u_1+b\cdot u_2$, and we obtain a contradiction.
Hence, $x_3=(r,p)$ and $y_2=(s,q)$ in $\mathbb{S}_+^2(1)\times X''$ for $q\neq p\in X''$. We can repeat step \textbf{2.} and obtain $$X=\mathbb{S}^3_+(1)\times^{N-3}_{\sin\circ\de_{\partial \mathbb{S}^3_+(1)}}X'''.$$
\smallskip
\\
\textbf{5.}
The decomposition continues until $$X=\mathbb{S}^{n-1}_+(1)\times_{\sin \circ\de_{\partial\mathbb{S}^{n-1}_+(1)}}^{N-n} Z$$ for some metric measure space $Z$ such that $N-n<1$. $Z$ is either a 
single point, or $Z$ consists of exactly two points a distance $\pi$, and it follows $N=n=k$. In the latter case we also obtain $\lambda_{k+1}=N$ since $X$ is the standard sphere.
\smallskip
\\
\textbf{6.}
If $k>N$, then we can repeat the previous steps for $l\in\mathbb{N}$ that is the biggest integer smaller than $N$. Hence, $X$ is either the upper hemi-sphere or the $l$-dimensional standard sphere. 
In the latter case it follows $N=l$.
But since $\lambda_{l+1}=N$, $X$ has to be the sphere and $l+1=N+1=k$. \qed\end{proofhigher}

\section{Hessian lower bounds and convexity}\label{hes}
The main result of this section is Theorem \ref{convexity}. Since the space $\mathbb{D}^{\sX}_{\infty}$ is too restrictive, in the following definition we will extend the class of functions that admit a Hessian. 
\begin{definition}
Let $(X,\de_{\sX},\m_{\sX})$ be a metric measure space satisfying $RCD(\kappa,N)$. 
We say that $V\in W^{1,2}(X)$ admits a \textit{Hessian} if $|\nabla V|^2\in L^{\infty}(\m_{\sX})$ and for any $u\in \mathbb{D}_{\infty}^{\sX}$ we have $\langle \nabla V,\nabla u\rangle \in W^{1,2}(X)$. 
In this case $H[V]$ is defined as
\begin{align*}
H[V](u)=\frac{1}{2}\langle\nabla V,\nabla |\nabla u|^2\rangle-Du(\nabla \langle\nabla V,\nabla u\rangle).
\end{align*}
In particular, any $V\in \mathbb{D}^{\sX}_{\infty}$ admits a Hessian, and the definitions coincide.
We say $H[u]\geq K$ for some constant $K\in \mathbb{R}$ if
\begin{align*}
\int H[u](f,g)\phi d\m_{\sX}\geq K\int \langle \nabla f,\nabla g\rangle\phi d\m_{\sX}
\end{align*}
for any $f,g\in \mathbb{D}_{\infty}^{\sX}$ and for any $\phi\in D^{\infty}_+(L^{\sX})$.
\end{definition}
\begin{theorem}\label{convexity}
Let $(X,\de_{\sX},\m_{\sX})$ be a metric measure space that satisfies $RCD(\kappa,N)$, and let $V\in W^{1,2}(X)$ such that $\delta\leq V\leq \delta^{-1}$ for some $\delta>0$ and such that $V$ admits a Hessian.
Then the following statements are equivalent:
\begin{itemize}
\item[(i)]$V$ is continuous, and $H[V]\geq K$,
\smallskip
\item[(ii)]$V$ is $K$-convex.
\end{itemize}
\end{theorem}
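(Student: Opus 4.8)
The plan is to prove the two implications separately, with the implication $(ii)\Rightarrow(i)$ being the more robust direction and $(i)\Rightarrow(ii)$ requiring a careful localization/maximum-principle argument analogous to the one used for Theorem \ref{wichtigeaussage}.

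For $(ii)\Rightarrow(i)$: First I would recall that $K$-convexity of $V$ means that along every geodesic $\gamma:[0,1]\to X$ the function $t\mapsto V(\gamma_t)$ satisfies $v'' \geq K|\dot\gamma|^2$ in the distributional sense. Continuity of $V$ follows from $K$-convexity on a geodesic space together with local boundedness (the standard argument: a function that is semi-convex along all geodesics and locally bounded is locally Lipschitz, hence continuous; alternatively one invokes that $V$ already lies in $W^{1,2}(X)$ with $|\nabla V|\in L^\infty$, so it has a Lipschitz representative, and $K$-convexity pins down this representative). For the Hessian bound, the idea is to test the inequality $H[V](f,f)\geq K|\nabla f|^2$ against gradient flow trajectories: for $f\in\mathbb{D}_\infty^{\sX}$ one has the test-plan / horizontal-derivative interpretation of $H[V](f,f)$ as the second derivative of $V$ along (approximations of) the flow of $\nabla f$, and the concavity/convexity inequality along these curves — which is exactly what $K$-convexity delivers after integrating against a nonnegative weight $\phi$ — yields $\int H[V](f,f)\phi\, d\m_{\sX}\geq K\int|\nabla f|^2\phi\, d\m_{\sX}$. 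The polarization identity then gives the bilinear statement.

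For $(i)\Rightarrow(ii)$: This is where I expect the main obstacle. The strategy mirrors the gradient-comparison proof: fix a geodesic $\gamma$, and one wants to show $v:=V\circ\gamma$ satisfies $v''\geq K|\dot\gamma|^2$ distributionally. The difficulty is that a single geodesic is $\m_{\sX}$-null, so one cannot directly test the measure-valued inequality $H[V]\geq K$ against it. The remedy is to work with test plans: consider a suitable family of geodesics forming a test plan $\Pi$ with bounded compression, and use that $H[V]\geq K$ integrated against the marginals of $\Pi$ controls the second derivative in $t$ of $\int V\, d(e_t)_\#\Pi$. Combined with the continuity of $V$ (from hypothesis (i)) and a localization argument — restricting to small tubes around $\gamma$ and using the doubling + Poincaré structure so that $\m_{\sX}$-a.e. statements upgrade to statements along $\m_{\sX}$-a.e. geodesic — one obtains $K$-convexity along $\m_{\sX}$-a.e. geodesic. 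Finally, continuity of $V$ plus the fact that $RCD$-spaces are essentially non-branching (so that $\m_{\sX}$-a.e. geodesic behavior, together with density of endpoints, determines behavior along every geodesic) promotes this to $K$-convexity along \emph{every} geodesic. The main technical point to get right is the interchange of the distributional second derivative in $t$ with the integration against the test plan, i.e. showing that $t\mapsto\int V\, d(e_t)_\#\Pi$ inherits the semiconvexity from the pointwise inequality; here one approximates $V$ by $P_s^{\sX}V\in\mathbb{D}_\infty^{\sX}$, for which $H[\cdot]$ is the classical object and the computation is licit, and then passes to the limit $s\to 0$ using $|\nabla V|\in L^\infty$ and the $W^{1,2}$-convergence $\langle\nabla V,\nabla u\rangle$ guaranteed by the assumption that $V$ admits a Hessian.

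I expect the genuine obstacle to be the approximation step: $P_s^{\sX}V$ need not be $K$-convex even if $V$ is (the heat flow does not preserve semiconvexity in this generality), so one cannot run the argument symmetrically. For the direction $(i)\Rightarrow(ii)$ this is fine because one only needs $H[P_s^{\sX}V]$ to converge to $H[V]$ in a weak sense and lower-semicontinuity of the relevant functionals; but one must check carefully that the lower bound $H[V]\geq K$ is stable, not merely that the Hessian exists. I would handle this by writing $H[P_s^{\sX}V](u) = \tfrac12\langle\nabla P_s^{\sX}V,\nabla|\nabla u|^2\rangle - Du(\nabla\langle\nabla P_s^{\sX}V,\nabla u\rangle)$ and using that each term converges in $L^1_{loc}$ as $s\to 0$, so the tested inequality passes to the limit — then deduce $K$-convexity of $V$ itself via the test-plan characterization of semiconvexity (as in Gigli's work and \cite{agsriemannian}).
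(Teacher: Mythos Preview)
Your proposal has genuine gaps in both directions, and the paper's argument is quite different from what you sketch.

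\medskip
\textbf{On $(ii)\Rightarrow(i)$.} You propose to interpret $H[V](f,f)$ as the second derivative of $V$ along the flow of $\nabla f$ and then invoke $K$-convexity along those curves. But $K$-convexity is a statement about \emph{geodesics}, and the integral curves of $\nabla f$ are almost never geodesics, so $K$-convexity of $V$ gives you no control on $\frac{d^2}{dt^2}V(\Phi_t^f(x))$. Even making sense of such a flow and its second-order differentiation in an $RCD$ space is nontrivial. The paper avoids this entirely: it uses the known fact (from \cite{agsriemannian}) that $K$-convexity of $V$ implies $(X,\de_{\sX},e^{-V}\m_{\sX})$ satisfies $RCD(\kappa+K,\infty)$, hence $BE(\kappa+K,\infty)$. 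Then one computes $\Gamma_2^{\sX^{\alpha},e^{-V^{\alpha}}\m_{\sX}}$ on the rescaled space $X^{\alpha}=(X,\alpha^{-1}\de_{\sX})$ with $V^{\alpha}=\alpha^{-2}V$, which decomposes as $\Gamma_2^{\sX}(\cdot;\phi e^{-V^{\alpha}})+\alpha^{2}\int H[V](u)\phi\,d\m_{\sX,\sV^{\alpha}}$; dividing by $\alpha^{2}$ and sending $\alpha\to 0$ isolates the Hessian term and yields $H[V]\geq K$.

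\medskip
\textbf{On $(i)\Rightarrow(ii)$.} Your test-plan argument is essentially circular. The object $H[V]$ is defined purely through the $\Gamma$-calculus on test functions in $\mathbb{D}_{\infty}^{\sX}$; there is no a priori link between $H[V]\geq K$ and second derivatives of $V$ along geodesics --- establishing that link \emph{is} the theorem. Approximating by $P_s^{\sX}V$ does not break the circle: you would still need $(i)\Rightarrow(ii)$ for $P_s^{\sX}V$ itself. The paper's route is again via the drift transformation and scaling. One first shows, by a direct $\Gamma_2$ computation using $L^{\sX,\sV}u=L^{\sX}u-\langle\nabla V,\nabla u\rangle$, that the weighted space $(X,\de_{\sX},e^{-V}\m_{\sX})$ satisfies $BE(\kappa+K,\infty)$, hence $RCD(\kappa+K,\infty)$. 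The same holds for $(X^{\alpha},e^{-V^{\alpha}}\m_{\sX})$ with constant $\alpha^{2}\kappa+K$. One then invokes Sturm's gradient-flow result \cite{sturmverynew}: the $EVI_{\alpha^{2}\kappa+K}$ gradient flow of $\Ent_{e^{-V^{\alpha}}\m_{\sX}}$, after multiplying the EVI inequality by $\alpha^{2}$ and letting $\alpha\to 0$, subconverges to an $EVI_K$ gradient flow of $\mu\mapsto\int V\,d\mu$ on $\mathcal{P}_2(X)$; a moment estimate forces this limit flow to be non-diffusive (supported on Dirac masses), and the EVI contraction then yields unique $EVI_K$ gradient-flow curves of $V$ starting from every point, which is equivalent to $K$-convexity.

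\medskip
In short, the key idea you are missing is the \emph{drift transformation plus rescaling} device: pass from the Hessian bound on $V$ to a Bakry--\'Emery bound on the weighted space $e^{-V}\m_{\sX}$, use the known $BE\Leftrightarrow RCD$ equivalence, and then let the metric scaling $\alpha\to 0$ kill the curvature of the ambient space so that only the convexity of $V$ survives.
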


\begin{proof}``$\Longrightarrow$'': Since $RCD^*(\kappa,N)$ implies $RCD(\kappa,\infty)$, the Cheeger enery $\ChX$ satisfies $BE(\kappa,\infty)$ \cite{agsbakryemery}:
\begin{align*}
\int_X|\nabla u|^2 L^{\sX}\phi\ d\m_{\sX}\geq \int_X \langle \nabla u, \nabla L^{\sX}u\rangle \phi\ d\m_{\sX}+\kappa\int_X|\nabla u|^2d\m_{\sX}
\end{align*}
for any pair $(u,\phi)$ with $\phi\geq 0$ and $\phi\in D^{\infty}(L^{\sX})$ and $u\in D(\Gamma_2^{\sX})$. 
On the other hand $H[V]\geq K$ implies that
\begin{align*}
\int_X\langle \nabla \langle\nabla V,\nabla g\rangle,\nabla g\rangle\varphi d\m_{\sX}- \frac{1}{2}\int_{X}\langle \nabla \langle \nabla g,\nabla g\rangle,\nabla V\rangle\varphi d\m_{\sX}\geq K\int_{\sX}|\nabla g|^2\varphi d\m_{\sX}
\end{align*}
for any $g\in \mathbb{D}_{\infty}$ and $\varphi\in D^{\infty}_+(L^{\sX})$. $V$ is Lipschitz since $|\nabla V|\in L^{\infty}(X)$ and $X$ satisfies $RCD(\kappa,\infty)$ \cite{agsriemannian}.
\smallskip
\paragraph{\textbf{1.}} We will show that the transformed space $(X,\de_{\sX},\m_{\sX,\sV})$ with $d\m_{\sX,\sV}:=e^{-V}d\m_{\sX}$ satisfies the condition $RCD(\kappa+K,\infty)$.
\smallskip

Since $\delta \leq e^{-\sV}\leq \delta^{-1}$ on $X$,
it follows that $L^p(\m_{\sX})=L^p(\m_{\sX,\sV})$ for any $p\in [1,\infty]$ and $W^{1,2}(\m_{\sX})=W^{1,2}(\m_{\sX,\sV})$.
 If we choose $v\in W^{1,2}_0(\m_{\sX,\sV})$ that is Lipschitz continuous, then $\bar{v}:=ve^{V}\in W^{1,2}_0(\m_{\sX,\sV})$ and it is Lipschitz continuous as well. Then, if we consider $u\in D(L^{\sX,\sV})$, we have
\begin{align*}
\int_X \bar{v}L^{\sX,\sV}u d\m_{\sX,\sV}&=-\int_X\langle\nabla \bar{v},\nabla u\rangle e^{-V}d\m_{\sX}\\
&=-\int_X\left[\langle \nabla \bar{v}e^{-V},\nabla u\rangle -\langle \nabla V,\nabla u\rangle \bar{v} e^{-V}\right]d\m_{\sX}
\\
&=-\int_X\langle \nabla \bar{v}e^{-V},\nabla u\rangle d\m_{\sX}+\int_X\langle \nabla V,\nabla u\rangle \bar{v} e^{-V}d\m_{\sX}
\end{align*}
Therefore, %
%\begin{align*}
%\int_X \left[L^{\sX,\sV}u-\langle \nabla V,\nabla u\rangle\right] v d\m_{\sX}=-\int_X\langle \nabla v,\nabla u\rangle d\m_{\sX}
%\end{align*}
%and 
the measure valued Laplacian of $u$ with respect to $\m_{\sX}$ has density 
$$L^{\sX}u=L^{\sX,\sV}u-\langle \nabla V,\nabla u\rangle\in L^2(\m_{\sX}).$$
Similar, one can see that, if $u\in D^2(L^{\sX})$, the measure valued Laplacian of $u$ with respect $\m_{\sX,\sV}$ has density $L^{\sX}u+\langle \nabla V,\nabla u\rangle\in L^2(\m_{\sX})$.
Hence, $D^2(L^{\sX})=D^2(L^{\sX,\sV})$.
Consider 
\begin{align*}
\Xi:=\bigcup_{t>0}P_t^{\sX}\left[D(L^{\sX})\right] \ \ \& \ \ \Xi_+:=\bigcup_{t>0}P_t^{\sX}\left[D_+^{\infty}(L^{\sX})\right]
\end{align*}
The curvature-dimension condition for $(X,\de_{\sX},\m_{\sX})$ implies that $\Xi\subset \mathbb{D}_{\infty}^{\sX}$ and $\Xi_+\subset D^{\infty}_+(L^{\sX})\cap\mathbb{D}^{\sX}_{\infty}$. 
In particular, $u, |\nabla u|_{\sX}, L^{\sX}u\in L^{\infty}(\m_{\sX})$ if $u\in \Xi$ or $u\in \Xi_+$, and $|\nabla u|^2\in W^{1,2}(\m_{\sX})=W^{1,2}(\m_{\sX,\sV})$. 
Therefore, if we consider $u\in \Xi$, we have $u\in D^2(L^{\sX,\sV})$ and $L^{\sX,\sV}\in W^{1,2}(\m_{\sX,\sV})$.
Hence, we can compute $\Gamma^{\sX,\sV}_2(u,\phi)$ for $u\in \Xi$ and $\phi\in \Xi_+$.
\begin{align*}
\Gamma_2^{\sX,\sV}(u;\phi)=\frac{1}{2}\underbrace{\int_X|\nabla u|^2L^{\sX, V}\phi d\m_{\sX,\sV}}_{=:(I)} - \underbrace{\int_X\langle\nabla u, \nabla L^{\sX,\sV} u\rangle \phi d\m_{\sX,\sV}}_{=:(II)}
\end{align*}
Since $|\nabla u|^2\in W^{1,2}(\m_{\sX,\sV})$, we can continue as follows.
\begin{align*}
(I)&=-\int_X\langle\nabla|\nabla u|^2,\nabla \phi\rangle e^{-V}d\m_{\sX}\\
&=-\int_X\langle\nabla|\nabla u|^2,\nabla \phi e^{-V}\rangle d\m_{\sX}+\int_{X}\langle\nabla |\nabla u|^2,\nabla e^{-V}\rangle\phi d\m_{\sX}\\
%&=\frac{1}{2}\int_X|\nabla u|^2\left[L^{\sX,\sV}\left(\phi e^{-V}\right)-\phi L^{\sX,\sV}e^{-V}-\langle\nabla e^{-V},\nabla \phi\rangle\right] d\m_{\sX}\\
%&=\frac{1}{2}\int_X|\nabla u|^2\left[L^{\sX}\phi e^{-V} -\langle \nabla V,\nabla \phi e^{-V}\rangle-\phi L^{\sX}e^{-V}+\phi\langle \nabla V,\nabla e^{-V}\rangle-\langle\nabla e^{-V},\nabla \phi\rangle\right] d\m_{\sX}\\
%&=\frac{1}{2}\int_X|\nabla u|^2\left[L^{\sX}\phi e^{-V} -\langle \nabla V,\nabla \phi e^{-V}\rangle+\phi e^{-V}L^{\sX}V-\phi e^{-V}\langle\nabla V,\nabla V\rangle +\phi\langle \nabla V,\nabla e^{-V}\rangle-\langle\nabla e^{-V},\nabla \phi\rangle\right] d\m_{\sX}\\
%&=\frac{1}{2}\int_X|\nabla u|^2\left[L^{\sX}\phi e^{-V} +\left(-\langle \nabla V,\nabla \phi e^{-V}\rangle+\phi \langle\nabla V,\nabla e^{-V}\rangle+e^{-V}\langle\nabla V,\nabla \phi\rangle\right) +\phi\langle \nabla V,\nabla e^{-V}\rangle\right] d\m_{\sX}\\
%&=\int_X|\nabla u|^2L^{\sX}\left(\phi e^{-V}\right) d\m_{\sX}- \int_X \langle \nabla \langle \nabla u,\nabla u\rangle,\nabla V\rangle\phi e^{-V} d\m_{\sX}\\
(II)&=\int_X\langle\nabla u, \nabla L^{\sX} u\rangle \phi e^{-V}d\m_{\sX}-\int_X \langle \nabla \langle\nabla V,\nabla u\rangle,\nabla u\rangle\phi e^{-V}d\m_{\sX}
\end{align*}
Since $\phi e^{-V}\in W^{1,2}(\m_{\sX})$ and $-\int\langle\nabla|\nabla u|^2,\nabla \phi e^{-V}\rangle d\m_{\sX}=\int \phi e^{-V} d{\boldsymbol L}^{\sX}|\nabla u|^2$, Lemma \ref{sav} implies
\begin{align*}
\frac{1}{2}(I)-(II)&\geq \kappa\int_X|\nabla u|^2\phi d\m_{\sX,\sV}+\int_XH[V](u,u)\phi d\m_{\sX,V}\\
&\geq (\kappa+K)\int_X|\nabla u|^2d\m_{\sX}.
\end{align*}
We extend the previous estimate from $\Xi$ to $D(\Gamma_2^{\sX,\sV})$. Let $u\in D(\Gamma_2^{\sX,\sV})\subset D(L^{\sX,\sV})$. Then $P_t^{\sX}u\in \Xi$. 
$P_t^{\sX}u\rightarrow u$ in $D(L^{\sX,V})$ if $t\rightarrow 0$. To see that we observe
\begin{align*}
\left\|L^{\sX,\sV}P_t^{\sX}u-L^{\sX,\sV}u\right\|_{L^{2}}\leq \underbrace{\left\|L^{\sX}\left(P_t^{\sX}u-u\right)\right\|_{L^2}}_{\rightarrow 0}+\underbrace{\left\||\nabla V|\right\|_{L^2}\left\|P_t|\nabla u|-|\nabla u|\right\|_{L^2}}_{\rightarrow 0}.
\end{align*}
Hence, we can take the limit in $\Gamma_2^{\sX,\sV}(P^{\sX}_tu;P^{\sX}_s\phi)\geq 0$ if $t,s\rightarrow 0$ (first $t$ then $s$), and we obtain $\Gamma_2^{\sX,\sV}(u;\phi)\geq 0$
for $u\in D(\Gamma_2^{\sX,\sV})$ and $\phi \in D^{\infty}(L^{\sX,\sV})$. For instance, this works precisely like in paragraph 3. and 4. of the proof of Theorem 3.23 in \cite{ketterer2}.
Hence, $\Ch^{\sV,\sX}$ satisfies $BE(\kappa+K,\infty)$ and by the equivalence result from \cite{agsbakryemery} 
$(X,\de_{\sX},e^{-V}d\m_{\sX})$ satisfies $RCD(\kappa+K,\infty)$. 
\medskip
\paragraph{\textbf{2.}}
In the last step we use the proof of Sturm's gradient flow result from \cite{sturmverynew}. Roughly speaking, a time shift transformation applied to $\ChX$ makes the diffusion part of the entropy gradient flow of $(X,\de_{\sX},\m_{\sX,\sV})$ disappear. 
Let us briefly sketch the main idea.
Let $\alpha>0$. Consider $X^{\alpha}=(X,\alpha^{-1} \de_{\sX})$ and $V^{\alpha}=V/\alpha^2$. It is easy to check that 
$|\nabla u|_{X^{\alpha}}=\alpha|\nabla u|_{X}$. Then $H[{V^{\alpha}}]\geq K$ with respect to $|\nabla \cdot|_{X^{\alpha}}$ (one can check that $H_{\sX}[V](u)=\alpha^{-2}H_{\sX^{\alpha}}[V^{\alpha}](u)$). $(X^{\alpha},\m_{\sX})$ satisfies $RCD(\alpha^{2}\kappa,\infty)$. By the previous part of the proof, it 
follows that $(X^{\alpha},\m_{\sX^{\alpha},\sV^{\alpha}})$ satisfies $RCD(\alpha^{2}\kappa+K,\infty)$. Hence, there exists a gradient flow curve $\mu_t$ of $\mbox{Ent}_{\m_{\sX^{\alpha},\sV^{\alpha}}}$ that satisfies
\begin{align*}
\frac{1}{2\alpha^2}\frac{d}{dt}\de_{W}(\mu_t,\nu)^2+\frac{\alpha^2\kappa+K}{\alpha^2 2}\de_{W}(\mu_t,\nu)^2\leq \Ent_{\m_{\sX^{\alpha},\sV^{\alpha}}}(\nu)-\Ent_{\m_{\sX^{\alpha},\sV^{\alpha}}}(\mu_t).
\end{align*}
One can easily check that 
$$
\Ent_{\m_{\sX^{\alpha},\sV^{\alpha}}}(\mu)=\Ent_{\m_{\sX^{\alpha}}}(\mu)+\int_XV^{\alpha}d\mu=\Ent_{\m_{\sX}}(\mu)+\int_X\alpha^{-2}V d\mu.
$$
If we multiply by $\alpha^2$ and let $\alpha\rightarrow 0$, a subsequence of $(\mu^{(\alpha)}_t)_{\alpha}$ converges to a evi$_{K}$ gradient flow curve of $\int Vd\mu$ (see \cite{sturmverynew} for details).
By an estimate for the moments (see Lemma 3 in \cite{sturmverynew}) the evolution is non-diffusive and   
the contraction property guarantees uniqueness for any starting point. Hence, this yields unique evi$_{K}$ gradient flow curves in the underlying space for $\m_{\sX}$-a.e. starting point. 
And by the contraction estimate again it is true for every point. 
It follows that $V$ admits unique evi$_K$ gradient flow curves for any starting point $p\in X$ that implies $V$ is $K$-convex. For details we refer to \cite{sturmverynew}.
\medskip\\
``$\Longleftarrow$'': $(X^{\alpha}, e^{-V^{\alpha}}\m_{\sX})$ satisfies $RCD(\alpha^{2}\kappa+K,\infty)$ \cite{agsriemannian}. Hence, its Cheeger energy satisfies $BE(\alpha^{2}\kappa+K,\infty)$ \cite{agsbakryemery}.
Since $V\in \mathbb{D}^{\sX}_{\infty}$, we can compute 
\begin{align*}
\Gamma_2^{\sX^{\alpha},\m_{\sX^{\alpha},\sV^{\alpha}}}(u;\phi)=
\Gamma_2^{\sX}(u;\phi e^{-V^{\alpha}})+\int_X\underbrace{H[V^{\alpha}](u)}_{=\alpha^2H[V](u)}\phi d\m_{\sX,\sV^{\alpha}}\geq \dots 
\end{align*}
for any $u\in\mathbb{D}_{\infty}^{\sX}=\mathbb{D}_{\infty}^{\sX^{\alpha}}$ and for 
any suitable $\phi>0$.
The result follows if multiply the previous inequality by $\alpha^{-2}$ and let $\alpha\rightarrow 0$.
\end{proof}

\section{Final remarks}
We want to make a few additional comments on the non-Riemannian case. The Lichnerowicz spectral gap estimate also holds in the case when $(X,\de_{\sX},\m_{\sX})$ just satisfies $CD(K,N)$ for $K>0$ and $N\geq 1$.
One can ask if we obtain a similar rigidity results in this situation. The failure of a metric splitting theorem for non-Riemannian $CD$-spaces indicates 
that one can not hope for a metric Obata theorem for non-Riemannian spaces. 
But a topological rigidity result might be true. Indeed, for weighted Finsler manifolds that satisfy a curvature-dimension condition $CD(K,N)$ for $K>0$ and $N\geq 1$, the following theorem is an 
easy consequence of results by Ohta \cite{ohtpro} and Wang/Xia \cite{wangxia}.
\begin{theorem}
Let $(X,\mathcal{F}_{\sX},\m_{\sX})$ be a weighted Finsler manifold that satisfies the condition $CD(K,N)$ for $K>0$ and $N>1$. 
Assume there is $u\in C^{\infty}(X)$ such that
$$L^{\sX}u=-\frac{K N}{N-1}u$$ 
where $L^{\sX}$ is the Finsler Laplacian with respect to $\m_{\sX}$.
Then, there exists a Polish space $(X',\m_{\sX'})$ such that the measure space $(X,\m_{\sX})$ is isomorphic to a topological suspension over $X'$.
\end{theorem}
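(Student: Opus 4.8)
The plan is to reduce everything to the equality case of the Finsler Lichnerowicz inequality and then invoke the rigidity analysis of Wang--Xia together with Ohta's suspension criterion; no new hard analysis is needed, which is why the paper calls the statement an easy consequence. First I would recall the Finsler Lichnerowicz estimate \cite{wangxia}: for a weighted Finsler manifold satisfying $CD(K,N)$ with $K>0$ and $N>1$, the first nonzero eigenvalue of the (nonlinear) Finsler Laplacian $L^{\sX}$ satisfies $\lambda_1\ge\frac{KN}{N-1}$, via the Bochner--Weitzenb\"ock formula of Ohta--Sturm. Since $u\in C^{\infty}(X)$ is smooth, testing $L^{\sX}u=-\frac{KN}{N-1}u$ against the constant $1$ gives $\int_X u\,d\m_{\sX}=0$, so $u$ is non-constant and $\frac{KN}{N-1}$ is an admissible value for $\lambda_1$; hence $\lambda_1=\frac{KN}{N-1}$ and we are exactly in the equality case.

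Next I would run the Finsler gradient comparison of Wang--Xia \cite{wangxia}, which is the Bakry--Qian/Kr\"oger scheme in the Finsler category and is already quoted in this paper (their Theorem~3.2 and its consequences): after normalizing $u$ one obtains $|\nabla u|^{2}+\frac{K}{N-1}u^{2}=\mathrm{const}$, so the composition of $u$ with the appropriate inverse trigonometric change of variables has Finsler gradient bounded by $1$. Feeding this into the distance representation by such functions gives $\diam_X=\pi\sqrt{\frac{N-1}{K}}$, i.e. the Bonnet--Myers bound is saturated. If $p$ and $q$ are the points where $u$ attains its minimum and maximum, then $\de_{\sX}(p,q)=\diam_X$, the unit-speed minimizing geodesics from $p$ to $q$ foliate $X$, and each interior level set $\{u=c\}$ is homeomorphic to a fixed space $X'$.

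The topological suspension is then read off via Ohta's characterization \cite{ohtpro}: the ray foliation gives a homeomorphism $(0,\pi)\times X'\to X\setminus\{p,q\}$ sending $(t,y)$ to the point at parameter $t$ on the ray through $y$; disintegrating $\m_{\sX}$ along the rays and using that the $CD(K,N)$ inequality, equivalently the measure-contraction inequality, holds with equality along each ray identifies the conditional measures, up to the warping weight $\sin_{K/(N-1)}(t)^{N-1}$, with a single measure $\m_{\sX'}$ on $X'$. This is exactly Ohta's criterion for $(X,\m_{\sX})$ to be a suspension over $(X',\m_{\sX'})$. Since the ambient space is a smooth manifold and $X'$ is a closed subset, both are Polish, so the conclusion has the asserted form.

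The main obstacle is this last step. Unlike in the Riemannian $RCD$ setting of the rest of the paper, there is no metric splitting theorem in the Finsler category --- two-sided control of Busemann functions fails and the Cheeger--Gromoll argument breaks down --- so one cannot promote ``topological suspension'' to an isometric warped product and must instead match the rays and the disintegrated measures directly. Verifying that the rays glue continuously into a homeomorphism, and that the conditional measures really are one and the same measure on $X'$, is where the quantitative equality analysis of Wang--Xia and Ohta does the work; by contrast the reduction to the equality case is routine. One should also stay mildly careful with the non-reversibility of the Finsler metric throughout, replacing symmetric quantities by their forward/backward versions as in \cite{wangxia}.
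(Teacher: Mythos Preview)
Your approach is the paper's: Wang--Xia's gradient comparison (their Theorem~3.1) gives $\lip(\sin^{-1}(u))\le 1$, hence two points at maximal distance $\pi\sqrt{(N-1)/K}$, and then Ohta's Theorem~5.5 in \cite{ohtpro} delivers the topological suspension directly. Two small remarks: Ohta's theorem is invoked as a black box---maximal diameter in, suspension out---so your ``main obstacle'' paragraph about gluing rays and matching conditional measures is superfluous; and what Wang--Xia actually prove in the Finsler $CD$ setting is the gradient \emph{inequality} $\lip(\sin^{-1}(u))\le 1$, not the identity $|\nabla u|^2+\tfrac{K}{N-1}u^2=\mathrm{const}$ (that is the $RCD$ result of Theorem~\ref{crucial} of this paper, and is neither available nor needed here).
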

\begin{proof}
Theorem 3.1 in \cite{wangxia} implies that $\lip (\sin^{-1}(u))\leq 1$. Hence, there are points $x,y\in X$ such that $\de_{\mathcal{F}_X}(x,y)=\pi$, and we can apply Theorem 5.5 from \cite{ohtpro}.
\end{proof}
\begin{remark}
The previous Theorem suggests that a topological eigenvalue rigidity result may also hold in a more general class of metric measure spaces. 
\end{remark}

\small{
\bibliography{new}

\bibliographystyle{amsalpha}
% \nocite{*}
%\bibliography{Dissertation}
%\addcontentsline{toc}{chapter}{References}
}
\end{document}